\newcommand{\col}{\hbox{col}}
\newtheorem{assmp}{\bf Assumption}
\newtheorem{rem}{\bf Remark}
\newtheorem{lem}{\bf Lemma}
\newtheorem{thm}{\bf Theorem}
\newtheorem{prop}{\bf Proposition}
\newcommand{\EQ}{\begin{eqnarray}}
	\newcommand{\EN}{\end{eqnarray}}
\newcommand{\EQQ}{\begin{eqnarray*}}
	\newcommand{\ENN}{\end{eqnarray*}}
\begin{document}
	\title{A New Approach to the Data-Driven Output-Based LQR Problem of Continuous-Time Linear Systems}

	\author{Liquan~Lin, Haoyan~Lin~and~Jie~Huang,~\IEEEmembership{Life Fellow,~IEEE}
		\thanks{This work was supported in part by the Research Grants Council of the Hong Kong Special Administration Region under grant No. 14202425.}
\thanks{The authors are  the Department of Mechanical and Automation Engineering, The Chinese University of Hong Kong, Hong Kong (e-mail: lqlin@mae.cuhk.edu.hk; hylin@mae.cuhk.edu.hk; jhuang@mae.cuhk.edu.hk. Corresponding author: Jie Huang.)}}
	

	\maketitle
	
	\begin{abstract}
    A promising method for constructing a data-driven output-feedback control law involves the construction of a model-free observer.
   The {\color{black}Linear Quadratic Regulator (LQR)} optimal control policy can then be obtained by both policy-iteration (PI) and value-iteration (VI) algorithms. However, this method requires some unknown parameterization matrix to be
   of full row rank and needs to solve a sequence of high dimensional linear equations for either PI or VI algorithm. In this paper, we first show that this matrix is of full row rank under the standard controllability assumption of the plant, thus removing the main hurdle for
   applying this method.
   Then we further modify the existing method by defining an ancillary system whose LQR solution will lead to a data-driven output-feedback control law. By this new method,
   the rank condition of the unknown parameterization matrix is not needed any more. Moreover,  we derive  a new sequence of linear equations  for either PI or VI algorithm whose
   number of unknown variables is significantly less than that of the existing PI or VI algorithm, thus  not only improving the computational efficiency but also relaxing the solvability conditions of the existing PI or VI algorithm. Further, since the existing  PI or VI algorithm only applies to the case where  some Riccati equation admits a unique positive definite solution, we {\color{black}prove} that both the PI algorithm   and the VI algorithm in the literature can be generalized to the case where the Riccati equation only admits a unique positive semi-definite solution.

	\end{abstract}
	
	\begin{IEEEkeywords}
		Adaptive dynamic programming, reinforcement learning, data-driven control, LQR, output feedback.
	\end{IEEEkeywords}

	\IEEEpeerreviewmaketitle

    \section{Introduction}

    As a  data-driven technique, reinforcement learning (RL) or adaptive dynamic programming (ADP) has proven to be effective in addressing a wide range of optimal control problems with partially or completely unknown dynamics \cite{sutton2018reinforcement}\cite{bertsekas2019reinforcement}\cite{lewis2012}.
    For instance, \cite{vrabie2009adaptive} studied the {\color{black}Linear Quadratic Regulator (LQR)} problem for partially unknown linear systems by devising a policy-iteration (PI) method to iteratively solve an algebraic Riccati equation online. \cite{jiang2012computational} extended the results of \cite{vrabie2009adaptive} to completely unknown linear systems using a PI-based approach.
    {\color{black}Recently, \cite{lopez2023} proposed a more computationally efficient PI-based algorithm over the algorithm in \cite{jiang2012computational}.}
    As the PI-based method necessitates a stabilizing feedback gain to initiate the iteration process, \cite{bian2016} introduced a value-iteration (VI) technique to address the LQR problem for unknown linear systems without needing an initial stabilizing feedback gain. Based on the PI method proposed by \cite{jiang2012computational}, \cite{gao2016adaptive} further studied the data-driven optimal output regulation problem for linear systems with unknown state equation. Then, \cite{jiang2022value} applied the VI method of \cite{bian2016} to solve the optimal output regulation problem. More recently, the results of \cite{gao2016adaptive} were further improved by \cite{lin2023}. Other relevant work can be found in, for example, \cite{chen2022homotopic,xie2022optimal,lin2024auto,chen2020off}.  However, all papers mentioned above assumed the availability of
   the state of the system.

    Since,  in more practical scenarios, only the output of the system is available, it is interesting to further engage studies on the RL/ADP methods based on the measurable output information.
    The first  attempt on solving the data-driven output-based LQR problem for continuous-time linear systems was made in \cite{zhu2014} for systems with known input matrix.   \cite{modares2016} proposed a data-driven output-based RL algorithm to solve the LQR problem for completely unknown continuous-time linear system. However, the method of \cite{modares2016} introduced  a discounted cost function whose discount factor has an upper bound determined by the system model for ensuring stability.
    On the basis of the state-based ADP algorithms in \cite{jiang2012computational} and \cite{bian2016}, \cite{rizvi2019} and  \cite{rizvi2023}
    developed both PI-based and VI-based output-based ADP algorithms that involves the construction of a model-free observer, thus
    avoiding the use of discounted cost functions.  More recently,  \cite{cui2023} proposed an output-based ADP algorithm that utilizes the historical continuous-time input-output trajectory data to reconstruct the current state without using a state observer at the cost of complicated function approximation with neural networks. Other relevant results on the output-based data-driven optimal control can be found in, for example, \cite{xie2024auto,xie2023output,chen2023output}.

   The approach of \cite{rizvi2019} and  \cite{rizvi2023} appears quite attractive as it leverages the well-known separation principle. Nevertheless, this method requires some  parameterization matrix to be
   of full row rank.  As this matrix is unknown, it is difficult to determine the rank of this matrix. Also, for a system with $n$-dimensional state, $m$-dimensional input and $p$-dimensional output, the dimension of the model-free  observer is  $n_{\zeta} = n (p+m)$. As a result, the implementation of either PI algorithm or VI algorithm will entail solving a sequence of linear algebraic equations with $\frac{n_\zeta(n_\zeta+1)}{2} + m n_\zeta$ unknown variables.
   For these reasons, in this paper,
   we first show that the unknown parameterization matrix is of full row rank under the standard controllability assumption of the given plant, thus removing the main hurdle for
   applying this method.  Then,
   we further modify the existing method by defining an ancillary linear system which is stabilizable and detectable, and show that the state-based PI and VI algorithms for solving the LQR problem
   of this ancillary linear system lead to the solution of the output-based PI and VI algorithm for solving the LQR problem of the original system.
   By this new method,
   the rank condition of the unknown parameterization matrix is not needed any more. Moreover, since the input matrix of the ancillary linear system is known,
   we obtain  a new sequence of linear equations  for either PI or VI algorithm whose
   number of unknown variables is equal to $\frac{n_\zeta(n_\zeta+1)}{2}$, thus reducing the number of unknown variables of the existing algorithms by $m n_\zeta$. As the number of unknown variables
   is reduced significantly, the solvability  condition  of these equations is also weakened significantly.
   In fact, we will apply our  modified output-based ADP PI and VI algorithms to two examples and show that these two examples  fail the  output-based ADP PI and VI algorithms in \cite{rizvi2019} and  \cite{rizvi2023}.
   {\color{black}Furthermore,  the existing state-based PI or VI algorithm only applies to the case where  some Riccati equation admits a unique positive definite solution. However,
   when it comes to the
   output-based PI or VI algorithm, one can only guarantee that
   the solution of the Riccati equation is positive semi-definite. To fill this gap, we further show that the PI algorithm as given in \cite{Kleinman}  and the VI algorithm as given in
   \cite{bian2016} can be generalized to the case where  the Riccati equation only admits a unique positive semi-definite solution.
  This result has independent interest because a Riccati equation satisfying  stabilizabity and detectability condition only admits a unique positive semi-definite solution.}

   The rest of this paper is organized as follows. In Section \ref{sec2}, after reviewing the state-based ADP algorithms based on \cite{jiang2012computational} and \cite{bian2016} as well as the output-based ADP algorithms based on \cite{rizvi2019} and \cite{rizvi2023}, we {\color{black}prove} that the unknown parameterization matrix given in \cite{rizvi2019} and \cite{rizvi2023} is of full row rank under the standard controllability assumption of the given plant.
   In Section \ref{sec3}, we present a modified approach to deal with the output-based ADP algorithms and develop two improved output-based ADP algorithms. In Section \ref{sec5}, we give two examples to illustrate the efficiency of our proposed algorithms, in which the existing output-based ADP algorithms fail.
   The paper is closed in Section \ref{sec6} with some concluding remarks. In Appendices A and B, we further study the PI method and VI method to iteratively solve the Riccati equation which only admits a unique positive semi-definite solution, respectively.

    \indent \textbf{Notation} Throughout this paper, $\mathbb{R}, \mathbb{R}_+, \mathbb{N} , \mathbb{N}_+$, $\mathbb{C}$ and $ \mathbb{C}_-$ represent the sets of real numbers, nonnegative real numbers, nonnegative integers,  positive integers, complex numbers and the open left-half complex plane, respectively. $\mathcal{S}^n$ is the set of all $n\times n$ real, symmetric matrices. $\mathcal{P}^n$ is the set of all $n\times n$ real, symmetric and positive semidefinite matrices. $||\cdot||$ represents the Euclidean norm for vectors and the induced norm for matrices. $\otimes$ denotes the Kronecker product.  For $b=[b_1, b_2, \cdots, b_n]^T\in \mathbb{R}^n$, $\text{vecv}(b)=[b_1^2,b_1b_2,\cdots,b_1b_n,b_2^2,b_2b_3,\cdots, b_{n-1}b_n,b_n^2]^T \in \mathbb{R}^{\frac{n(n+1)}{2}}$. For a symmetric matrix $P=[p_{ij}]_{n\times n}\in \mathbb{R}^{n\times n}$, $\text{vecs}(P)=[p_{11},2p_{12},\cdots,2p_{1n},p_{22},2p_{23},\cdots, 2p_{n-1,n}, p_{nn}]^T\in \mathbb{R}^{\frac{n(n+1)}{2}}$. For  $v\in \mathbb{R}^n$, $|v|_P=v^TPv$. For column vectors $a_i, i=1,\cdots,s$,  $\mbox{col} (a_1,\cdots,a_s )= [a_1^T,\cdots,a_s^T  ]^T,$ and, if  {\color{black}$A = [a_1,\cdots,a_s ]$}, then  vec$(A)=\mbox{col} (a_1,\cdots,a_s )$.
    For $A\in \mathbb{R}^{n\times n}$, $\sigma(A) $ denotes the set composed of all the eigenvalues of $A$.
    {\color{black}For matrices $A_i, i=1,\cdots,n$,
    $\mbox{blockdiag}(A_1,...,A_n)$ is the block diagonal matrix $
    \left[\begin{array}{ccc}
    	A_1&&\\
    	&\ddots&\\
    	&&A_n
    \end{array}\right]$.}
    $I_n $ denotes the identity matrix of dimension $n$. For any $0<T<\infty$, $\mathcal{D}([0,T], \mathcal{S}^n)$ denotes the space of function {\color{black}mapping} from $[0,T]$ to $\mathcal{S}^n$, that are right-continuous with left-hand limits, equipped with the Skorokhod topology. {\color{black}For a positive semidefinite matrix $Q\in \mathcal{P}^n$, $\sqrt{Q}$ denotes the unique positive semidefinite matrix such that $\sqrt{Q}^T\sqrt{Q}=Q$ and is called the square root of $Q$.}

    \section{\color{black}Preliminaries} \label{sec2}

    {\color{black}In this section, we first summarize the state-based PI approach proposed in \cite{jiang2012computational} and the state-based VI approach proposed in \cite{bian2016} for solving the model-free LQR problem.  Then, the output-based PI method and output-based VI method are  presented based on \cite{rizvi2019} and \cite{rizvi2023}. }

    Consider the following linear system:
    \begin{align} \label{lisys}
    	\begin{split}
    	\dot{x}=&Ax+Bu\\
    	y=&Cx,
    \end{split}
    \end{align}
    where $x\in \mathbb{R}^n$ is the system state, $u\in\mathbb{R}^m$ is the input and $y\in \mathbb{R}^p$ is the measurable output.   We  make the following assumptions:

    \begin{assmp}\label{assctrl}
    	The pair $(A,B)$ is controllable.
    \end{assmp}
    \begin{assmp}\label{ass1}
    	The pair $(A,B)$ is stabilizable.
    \end{assmp}
     \begin{assmp}\label{ass2}
    	The pair $(A,C)$ is observable.
    \end{assmp}

    The  LQR problem for \eqref{lisys} is to find a control law $u={K}x$ such that the  cost $\int_{0}^{\infty}(y^T{Q_y}y+u^T{R}u)d\tau$ is minimized,
    where ${Q_y}={Q_y}^T\geq0, {R}={R}^T>0$, with $(A,\sqrt{{Q_y}}C)$ observable.

    Let $Q=C^TQ_yC$. By \cite{kucera1972},  under Assumption \ref{ass1},   the following algebraic Riccati equation
    \begin{align}\label{areli}
    	A^T{P}^*+{P}^*A+{Q}-{P}^*B{R}^{-1}B^T{P}^*=0
    \end{align}
    admits a unique positive definite solution ${P}^*> 0$. Then, the solution of
    the LQR problem of \eqref{lisys} is given by ${K}^*=-{R}^{-1}B^T{P}^*$ and the optimal controller is $u^*=K^*x$.

   {\color{black}In what follows, we will introduce several iterative methods for approximating the optimal controller $u^*$. The super/subscript custom for denoting variables is given in TABLE \ref{tab:supersub}.}

    \begin{table}[htbp]\color{black}
    	\caption{Super/subscripts for symbols in iterative methods}
    	\centering
    	\begin{tabular}{c | >{\centering\arraybackslash}m{6cm}} 
    		\hline\hline
    		\emph{Super/subscript} & \emph{Meaning} \\
    		\hline
    	Superscript $P$ & variables for PI method
    	\\ \hline
    	Superscript $V$ & variables for VI method
    	\\ \hline
    	Subscript $k$ & Iteration step number
    	\\
    		\hline\hline
    	\end{tabular}
    	\label{tab:supersub}
    \end{table}

  \subsection{Model-Based Iterative Approaches to LQR Problem} \label{sec2-1}

 As  \eqref{areli} is nonlinear,   a model-based PI approach for obtaining ${P}^* $ is given by solving the following equations \cite{Kleinman}:
   \begin{subequations}\label{aleq1sin}
  	\begin{align}
  		0&=A_k^T{P}^P_{k} +{P}^P_{k}A_k+{Q}+({K}^P_{k})^T{R}{K}^P_{k} \\
   	{K}^P_{k+1}&=-{R}^{-1} B^T{P}^P_{k},
   \end{align}
    \end{subequations}
   where $A_k\triangleq A+B{K}^P_k$, $k=0,1,\cdots$, and ${K}^P_{0}$ is such that $A_0 $ is a Hurwitz matrix. The algorithm \eqref{aleq1sin}  guarantees the following properties for $k\in \mathbb{N}$:
   \begin{enumerate}
   	\item $\sigma(A_{k}) \subset \mathbb{C}_-$;
   	\item ${P}^*\leq {P}^P_{k+1}\leq {P}^P_k$;
   	\item $\lim\limits_{k\to \infty}{K}^P_k={K}^*,\lim\limits_{k\to \infty}{P}^P_k={P}^*$.
   \end{enumerate}

   A drawback of the model-based PI method is that it needs an initial stabilizing control gain ${K}^P_{0}$ to start the iteration. To circumvent this problem, a model-based VI approach was proposed in \cite{bian2016}, which does not need an initial stabilizing control gain. To introduce the model-based VI method in \cite{bian2016}, let $\epsilon_k$ be a series of time steps satisfying
   \begin{align}
   	\epsilon_k>0, \; \sum_{k=0}^{\infty}\epsilon_k=\infty, \; \sum_{k=0}^{\infty}\epsilon_k^2<\infty,
   \end{align}
   let    $\{B_q\}_{q=0}^{\infty}$ be a collection of bounded subsets in $\mathcal{P}^{n}$ satisfying
   \begin{align}
   	B_q\subset B_{q+1}, \; q\in \mathbb{N}, \; \lim\limits_{q\to \infty}B_q=\mathcal{P}^{n},
   \end{align}
   and let $\varepsilon>0$  be some small real number for determining the convergence criterion.
   Then the steps for obtaining the approximate solution to \eqref{areli} is shown in Algorithm \ref{vialg1}.
   \begin{algorithm}
   	\caption{{\color{black}Model-based} VI  Algorithm for solving \eqref{areli} \cite{bian2016}}\label{vialg1}
   	\begin{algorithmic}[1]
   		\State Choose ${P}^V_0=({P}^V_0)^T>0$. $k,q\leftarrow 0$.
   		\Loop
   		\State $\tilde{P}^V_{k+1}\leftarrow {P}^V_k+\epsilon_k (A^T{P}^V_k+{P}^V_kA-{P}^V_kB{R}^{-1}B^T{P}^V_k+{Q})$.
   		\If{$\tilde{P}^V_{k+1}\notin B_q$}
   		\State ${P}^V_{k+1}\leftarrow {P}^V_0$. $q\leftarrow q+1$.
   		\ElsIf{$||\tilde{P}^V_{k+1}-{P}^V_k||/\epsilon_k<\varepsilon$}
   		\State \Return ${P}^V_k$ as an approximation to ${P}^*$.
   		\Else
   		\State ${P}^V_{k+1}\leftarrow \tilde{P}^V_{k+1}$.
   		\EndIf
   		\State $k \leftarrow k+1$.
   		\EndLoop
   	\end{algorithmic}
   \end{algorithm}

It was shown in  \cite{bian2016} that, under Assumption \ref{ass1} and the assumption that  $(A,\sqrt{{Q_y}}C)$ is observable,  Algorithm \ref{vialg1} is such that $\lim\limits_{k\to \infty}{P}^V_k={P}^*$.


 \subsection{State-Based Iterative Approaches for Solving LQR Problem without Knowing $A$ and $B$} \label{sec2-2}
In this subsection, we present two iterative approaches for solving the LQR problem for \eqref{lisys} without knowing $A$ and $B$. Both of these {\color{black}methods} use only the state and input information.

{\color{black}For convenience, the data stack operators used in the following data-driven methods are summarized  in TABLE \ref{tab:symbols}, where  $a\in \mathbb{R}^{n_a}$ and $b \in \mathbb{R}^{n_b}$ are functions of time,  $u\in\mathbb{R}^m$ is the input  of \eqref{lisys}, $R$ is the performance index matrix introduced in the LQR problem of \eqref{lisys}, and $s\in \mathbb{N}_+$.}
\begin{table}[htbp]\color{black}
	\caption{Data Stack Operators}
	\centering
	\begin{tabular}{c | >{\centering\arraybackslash}m{7cm}} 
		\hline\hline
		\emph{Symbol} & \emph{Meaning} \\
		\hline
		$\delta_a$ &
		\makecell[c]{%
			$\delta_a = \Big[ \text{vecv}(a(t_1)) - \text{vecv}(a(t_0)), \cdots,$ \\
			$\text{vecv}(a(t_s)) - \text{vecv}(a(t_{s-1})) \Big]^T$
		} \\ \hline
		$\Gamma_{ab}$& \makecell[c]{$\Gamma_{ab}=[\int_{t_0}^{t_1}a\otimes b d\tau , \int_{t_1}^{t_2}a\otimes b d\tau, \cdots , \int_{t_{s-1}}^{t_s}a\otimes b d\tau]^T$}\\
		\hline
		$I_{aa}$&\makecell[c]{$I_{aa}=[\int_{t_0}^{t_1}\textup{vecv}(a)d\tau , \int_{t_1}^{t_2}\textup{vecv}(a) d\tau, \cdots ,$\\$ \int_{t_{s-1}}^{t_s}\textup{vecv}(a) d\tau]^T$}\\ \hline
		$I_{au}$&\makecell[c]{$I_{au}=[\int_{t_0}^{t_1}a\otimes {R}u d\tau , \int_{t_1}^{t_2}a\otimes {R}u d\tau, \cdots ,$\\$ \int_{t_{s-1}}^{t_s}a\otimes {R}u d\tau]^T$}\\
		\hline\hline
	\end{tabular}
	\label{tab:symbols}
\end{table}

   To introduce  the model-free algorithm proposed in \cite{jiang2012computational}  to obtain the approximate solution to \eqref{areli}, rewrite \eqref{lisys} as follows:
   \begin{align} \label{pidy}
   	\dot{x}=A_kx-B({K}^P_kx-u).
   \end{align}
   Integrating $\frac{ d (x^T (t) {{P}^P_k} x (t))}{dt}$ and using \eqref{aleq1sin} and \eqref{pidy} gives
   \begin{align}\label{piint}
   	|x(t+\delta t)|_{{P}^P_k}-&|x(t)|_{{P}^P_k}=\int_{t}^{t+\delta t}-|x|_{{Q}+({K}^P_{k})^T{R}{K}^P_{k}}\notag \\
   	&+2x^T({K}^P_k)^T{R}{K}^P_{k+1}x-2u^T{R}{K}^P_{k+1}x d\tau.
   \end{align}


   Thus, \eqref{piint}  implies
   \begin{align}\label{pilinear}
   	\Psi_k^P\begin{bmatrix}
   		\text{vecs}({P}^P_k)\\
   		\text{vec}({K}^P_{k+1})
   	\end{bmatrix}=\Phi_k^P,
   \end{align}
   where $\Psi_k^P=[\delta_x, -2\Gamma_{xx}(I_n\otimes (({K}^P_k)^T{R}))+2\Gamma_{xu}(I_n\otimes {R})]$ and $\Phi_k^P=-\Gamma_{xx}\text{vec}({Q}+({K}^P_{k})^T{R}{K}^P_{k})$.

   The solvability of \eqref{pilinear} is guaranteed by the following lemma \cite{jiang2012computational}.
   \begin{lem}
   	The matrix $\Psi_k^P$ has full column rank for {\color{black}$k\in\mathbb{N}$}, if
   	\begin{align}\label{rankconpi}
   		\textup{rank}([\Gamma_{xx}, \Gamma_{xu}])=& \frac{n(n+1)}{2}+mn.
   	\end{align}
   \end{lem}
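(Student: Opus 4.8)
The plan is to prove that $\Psi_k^P$, which has exactly $\tfrac{n(n+1)}{2}+mn$ columns, has trivial null space; full column rank is then immediate. I would write an arbitrary null vector as $\mathrm{col}(\text{vecs}(X),\text{vec}(Y))$ with $X=X^T\in\mathbb{R}^{n\times n}$ and $Y\in\mathbb{R}^{m\times n}$, and aim to deduce $X=0$ and $Y=0$ from $\Psi_k^P\,\mathrm{col}(\text{vecs}(X),\text{vec}(Y))=0$.

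The first step is to produce a \emph{fundamental identity} that eliminates $\delta_x$. Differentiating $x^TXx$ along \eqref{pidy} and integrating over each interval $[t_{i-1},t_i]$ gives, after stacking, $\delta_x\,\text{vecs}(X)=\Gamma_{xx}\,\mathbf a(X)+2\,\Gamma_{xu}\,\text{vec}(B^TX)$, where $\mathbf a(X)=\text{vec}(A_k^TX+XA_k-2(K_k^P)^TB^TX)$ collects the quadratic-in-$x$ contributions and $2\,\text{vec}(B^TX)$ comes from the cross term $2x^TXBu$. Substituting this into the null-space equation cancels $\delta_x$ and leaves $[\Gamma_{xx},\Gamma_{xu}]\,\mathrm{col}(\mathbf c_1,\mathbf c_2)=0$, where $\mathbf c_2=2\,\text{vec}(B^TX)+2\,\text{vec}(RY)$ and $\mathbf c_1$ combines $\mathbf a(X)$ with the $-2(I_n\otimes((K_k^P)^TR))\text{vec}(Y)$ term coming from the second block of $\Psi_k^P$; I write $C_1$ for the matrix with $\text{vec}(C_1)=\mathbf c_1$.

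Next I would invoke \eqref{rankconpi}, and this is the step I expect to be the main obstacle, because $[\Gamma_{xx},\Gamma_{xu}]$ does \emph{not} have full column rank over $\mathbb{R}^{n^2+mn}$. The columns of $\Gamma_{xx}$ indexed by $(i,j)$ and $(j,i)$ coincide, so $\Gamma_{xx}\mathbf c_1=\Gamma_{xx}\,\text{vec}(\mathrm{sym}(C_1))$ and only $\tfrac{n(n+1)}{2}$ of its directions are independent; together with the $mn$ columns of $\Gamma_{xu}$ this matches exactly the rank asserted in \eqref{rankconpi}. The careful point is therefore to read \eqref{rankconpi} as the statement that the map $(S,\mathbf c_2)\mapsto\Gamma_{xx}\text{vec}(S)+\Gamma_{xu}\mathbf c_2$ is injective on symmetric $S$, which then forces $\mathrm{sym}(C_1)=0$ and $\mathbf c_2=0$.

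Finally I would turn these two algebraic conditions into the conclusion. From $\mathbf c_2=0$ I obtain $RY+B^TX=0$, i.e. $Y=-R^{-1}B^TX$. Expanding $\mathrm{sym}(C_1)=0$ and substituting $A_k=A+BK_k^P$ makes the $(K_k^P)^TB^TX$ and $XBK_k^P$ terms cancel, reducing it to $A^TX+XA-(K_k^P)^TRY-Y^TRK_k^P=0$; inserting $RY=-B^TX$ then collapses this to the Lyapunov equation $A_k^TX+XA_k=0$. Since the PI recursion guarantees $\sigma(A_k)\subset\mathbb{C}_-$ for every $k\in\mathbb{N}$ by property (1) following \eqref{aleq1sin}, this equation has only the solution $X=0$, whence $Y=-R^{-1}B^TX=0$ as well. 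Thus every null vector of $\Psi_k^P$ vanishes and $\Psi_k^P$ has full column rank.
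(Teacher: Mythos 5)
Your proof is correct: the reduction of the null-space equation to $\mathrm{sym}(C_1)=0$ and $\mathbf c_2=0$ via the integral identity, the careful reading of \eqref{rankconpi} as injectivity of $(S,\mathbf c_2)\mapsto\Gamma_{xx}\mathrm{vec}(S)+\Gamma_{xu}\mathbf c_2$ on symmetric $S$, and the final collapse to $A_k^TX+XA_k=0$ with $A_k$ Hurwitz is exactly the standard argument. The paper itself gives no proof of this lemma (it defers to Lemma 6 of \cite{jiang2012computational}), and your argument reproduces that cited proof, including the one genuinely delicate point --- that $[\Gamma_{xx},\Gamma_{xu}]$ cannot have full column rank over $\mathbb{R}^{n^2+mn}$ and the rank condition must be interpreted on the symmetric subspace.
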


   Thus, by iteratively solving \eqref{pilinear}, one can finally obtain the approximated solution to \eqref{areli} without using $A$ and $B$.

    The iteration approach using \eqref{pilinear} is named the state-based PI method. Like the model-based PI method,  it needs an initial stabilizing control gain ${K}^P_{0}$.  When the matrices $A$ and $B$ are unknown, it is difficult to obtain such a stabilizing ${K}^P_{0}$. To overcome this difficulty, \cite{bian2016} developed a model-free state-based VI method as follows.

     Let ${H}^V_k=A^T{P}^V_k+{P}^V_kA, {K}^V_k=-{R}^{-1}B^T{P}^V_k$.
    Then,
    it is obtained from \eqref{lisys} that
    \begin{align}\label{intvipre}
    	|x(t+\delta t)|_{{P}^V_k}-|x(t)|_{{P}^V_k} =\int_{t}^{t+\delta t}\lbrack |x|_{{H}^V_k}-2{u}^T{R}{K}^V_kx \rbrack d\tau.
    \end{align}

  \eqref{intvipre}  implies
   \begin{align}\label{vilinearpre}
   	\Psi^V\begin{bmatrix}
   		\text{vecs}({H}^V_k)\\
   		\text{vec}({K}^V_k)
   	\end{bmatrix}=\Phi^V_k,
   \end{align}
   where $\Psi^V=\begin{bmatrix}
   	I_{xx}&-2I_{xu}
   \end{bmatrix}$ and $\Phi^V_k=\delta_{x}\text{vecs}({P}^V_k)$.

The solvability of \eqref{vilinearpre} is guaranteed by the following lemma \cite{bian2016}.
\begin{lem}
	The matrix $\Psi^V$ has full column rank  if
	\begin{align}\label{rankconvi}
		\textup{rank}([I_{xx}, I_{xu}])=& \frac{n(n+1)}{2}+mn.
	\end{align}
\end{lem}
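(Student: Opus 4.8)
The plan is to exploit the fact that $\Psi^V$ and $[I_{xx},\,I_{xu}]$ differ only by a scaling of one block of columns, so that the full-column-rank property transfers directly from the hypothesis. First I would settle the dimension bookkeeping: the block $I_{xx}$ has $\frac{n(n+1)}{2}$ columns, since it multiplies $\text{vecs}(H^V_k)$ with $H^V_k\in\mathcal{S}^n$, while $I_{xu}$ has $mn$ columns, since it multiplies $\text{vec}(K^V_k)$ with $K^V_k\in\mathbb{R}^{m\times n}$. Hence both $\Psi^V=[I_{xx},\,-2I_{xu}]$ and $[I_{xx},\,I_{xu}]$ have exactly $\frac{n(n+1)}{2}+mn$ columns, and the hypothesis \eqref{rankconvi} is precisely the statement that $[I_{xx},\,I_{xu}]$ attains full column rank.

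Next I would introduce the invertible block-diagonal matrix $S=\mbox{blockdiag}\big(I_{\frac{n(n+1)}{2}},\,-2I_{mn}\big)$ and observe that $\Psi^V=[I_{xx},\,I_{xu}]\,S$. Since right-multiplication by an invertible matrix preserves rank, $\textup{rank}(\Psi^V)=\textup{rank}([I_{xx},\,I_{xu}])=\frac{n(n+1)}{2}+mn$, which equals the number of columns of $\Psi^V$; therefore $\Psi^V$ has full column rank. Equivalently, and perhaps more transparently, I would argue at the level of the null space: if $\Psi^V\,\mbox{col}(v_1,v_2)=0$ then $I_{xx}v_1+I_{xu}(-2v_2)=0$, i.e.\ $[I_{xx},\,I_{xu}]\,\mbox{col}(v_1,-2v_2)=0$, and the full column rank of $[I_{xx},\,I_{xu}]$ forces $v_1=0$ and $v_2=0$, which yields the claim.

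There is essentially no obstacle here: the lemma is an immediate consequence of the elementary fact that scaling a block of columns by the nonzero constant $-2$ leaves the rank unchanged. The only point deserving any care is the dimension count that identifies the hypothesized rank value with full column rank; once that identification is in place, the conclusion follows in a single line via either the invertible-transformation argument or the null-space argument above.
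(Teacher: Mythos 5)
Your proof is correct. The paper itself states this lemma without proof, simply citing \cite{bian2016}, so there is no in-paper argument to compare against; but your observation is exactly the right one: since $\Psi^V=[I_{xx},\,-2I_{xu}]=[I_{xx},\,I_{xu}]\,\mbox{blockdiag}\bigl(I_{\frac{n(n+1)}{2}},\,-2I_{mn}\bigr)$, the hypothesis transfers immediately, and your dimension count ($\text{vecs}(H^V_k)\in\mathbb{R}^{\frac{n(n+1)}{2}}$, $\text{vec}(K^V_k)\in\mathbb{R}^{mn}$) correctly identifies the stated rank with full column rank. It is worth noting that this triviality is special to the VI regressor; the PI counterpart (the lemma for $\Psi^P_k$) is not a mere column rescaling of $[\Gamma_{xx},\Gamma_{xu}]$ and genuinely requires the null-space argument of \cite{jiang2012computational}, so your one-line reduction would not carry over there.
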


  {\color{black} \begin{rem}
   	In Step 3 of Algorithm \ref{vialg1}, the updating law of $\tilde{P}^V_{k+1}$ can be represented by $\tilde{P}^V_{k+1}\leftarrow {P}^V_k+\epsilon_k ({H}^V_k-({K}^V_k)^T{R}{K}^V_k+{Q})$. Thus, once ${H}^V_k$ and ${K}^V_k$ are solved from \eqref{vilinearpre}, one can update ${P}^V_{k+1}$ following Step 2-12 of Algorithm \ref{vialg1}.
   \end{rem}}

   Thus, by iteratively applying the solution of \eqref{vilinearpre} to Algorithm \ref{vialg1}, we can finally obtain the approximated solution to \eqref{areli} without knowing $A$ and $B$.

    \subsection{Output-Based Iterative Approaches for Solving LQR Problem {\color{black}Without} Knowing $A$, $B$ and $C$}\label{sec2-3}
    In most applications, the internal state is not available. \cite{rizvi2019} and \cite{rizvi2023} proposed both PI and VI data-driven methods for designing output feedback control law  based on the state parametrization technique.  In this subsection, we will summarize these two methods based on  \cite{rizvi2019} and \cite{rizvi2023}.

    Under Assumption \ref{ass2}, there exists an observer gain $L$ such that the eigenvalues of $A-LC$  can be arbitrarily placed. If $A-LC$ is Hurwitz, then
    the following Luenberger observer:
    \begin{align}\label{observer}
    	\dot{\hat{x}}= &A\hat{x}+Bu+L(y-C\hat{x})\notag \\
    	=&(A-LC)\hat{x}+Bu+Ly
    \end{align}
    will drive the state $\hat{x}$ to $x$ exponentially{\color{black}\cite{luenberger}}.

Let   $\Lambda(s)=\textup{det}(sI-A+LC) = s^n + \alpha_{n-1} s^{n-1} + \cdots+ \alpha_1 s + \alpha_0$. Then $(sI-A+LC)^{-1}=\frac{D_{n-1}s^{n-1}+D_{n-2}s^{n-2}+\cdots+D_1s+D_0}{\Lambda(s) }$ where $D_i\in \mathbb{R}^{n\times n}, i=0,1,\cdots, n-1$.
Let   $\zeta^i(t)\in \mathbb{R}^n$ be governed  by the following  system:
\begin{align} \label{observer2}
	\dot{\zeta}^i (t)=\mathcal{A} \zeta^i (t) +b w_i(t), ~~i = 1, \cdots, m+p,
\end{align}
where $w_i (t) = u_i (t)$, $i = 1, \cdots, m$,  $w_i (t) = y_{i-m}(t)$, $i = m+1, \cdots, m+p$, and
 \begin{align*}
	\mathcal{A}=\begin{bmatrix}
		0&1&0& \cdots& 0\\
		0& 0& 1& \cdots & 0\\
		\vdots & \vdots&\vdots& \vdots& \vdots\\
		0 & 0& 0& \cdots& 1\\
		-\alpha_0 & -\alpha_1& -\alpha_2& \cdots& -\alpha_{n-1}
	\end{bmatrix},~ b=\begin{bmatrix}
		0\\0\\ \vdots \\0\\ 1
	\end{bmatrix}.
\end{align*}

 \begin{rem}
 Since $\Lambda(s)$ is user-defined, $\mathcal{A}$ and $b$ are known matrices.
\end{rem}

Let $\zeta = \col (\zeta^1, \cdots,  \zeta^{p+m} )$ and  $M = [M_1,\cdots ,M_{p+m}]$ where $M_i =  [D_0 f_i ,\cdots ,D_{n-1} f_i]$ with $f_i$ the $i^{th}$ column of $B$ for $i =1, \cdots, m$ and $(i - m)^{th}$ column of $L$ for $i = m+1, \cdots, m+p$. Since $M\zeta (s) = (sI - A+ LC)^{-1} [B~~ L]\begin{bmatrix}
	u(s)\\y(s)
\end{bmatrix} $ where $\zeta (s),u(s),y(s)$ are the Laplace transform of $\zeta (t),u(t),y(t)$, we have
   $\lim_{t \rightarrow \infty} ( M \zeta (t) - x (t)) = 0$ exponentially. {\color{black}Thus, the optimal controller $u^*=K^*x$ can be approximated by $u=K^*M\zeta$. The idea of this output-based optimal controller design method is summarized in Fig. \ref{diagram}. The next question is how to compute the control gain $K^*M$ from data. For this purpose, \cite{rizvi2019} and \cite{rizvi2023} proposed both output-based PI and VI methods as follows.}

    \begin{figure}[H]\color{black}
   	\centering
   	\includegraphics[width=8.5cm]{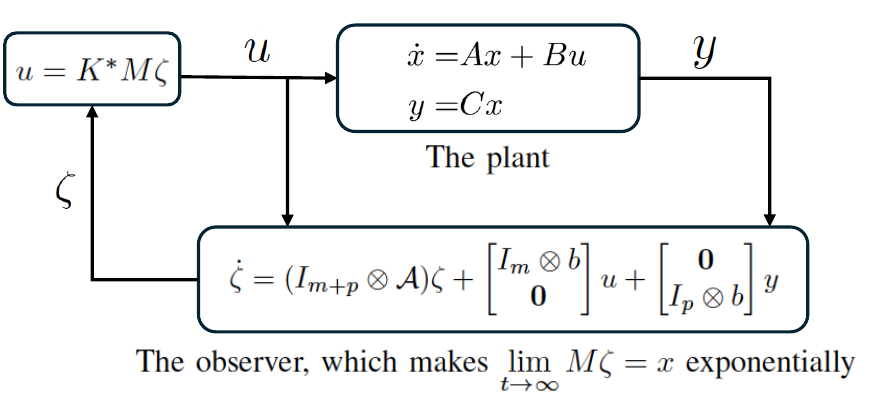}
   	\caption{Output-based optimal controller design method.}
   	\label{diagram}
   \end{figure}

    Let $\bar{P}_k^P =  M^T{P}_k^PM$ and $\bar{K}_k^P = {K}_k^PM$.
Then  replacing $x(t)$ in \eqref{piint} by $M\zeta(t)$ for $t\geq t_0$ with a sufficiently large $t_0>0$ gives:
    \begin{align}\label{piintout}
    	|\zeta(t+\delta t)|_{\bar{P}_k^P}-&|\zeta(t)|_{\bar{P}_k^P}=\int_{t}^{t+\delta t}-|y|_{Q_y}-\zeta^T(\bar{K}_k^P)^TR\bar{K}_k^P\zeta\notag \\
    	&+2\zeta^T(\bar{K}^P_k)^T{R}\bar{K}^P_{k+1}\zeta-2u^T{R}\bar{K}^P_{k+1}\zeta d\tau.
    \end{align}

    Thus, \eqref{piintout} implies
    \begin{align}\label{pilinearout}
    	\bar{\Psi}_k^P\begin{bmatrix}
    		\text{vecs}(\bar{P}^P_k)\\
    		\text{vec}(\bar{K}^P_{k+1})
    	\end{bmatrix}=\bar{\Phi}_k^P,
    \end{align}
    where $\bar{\Psi}_k^P=[\delta_\zeta, -2\Gamma_{\zeta\zeta}(I_{n_\zeta}\otimes ((\bar{K}^P_k)^T{R}))+2\Gamma_{\zeta u}(I_{n_\zeta}\otimes {R})]$, $\bar{\Phi}_k^P=-\Gamma_{yy}\text{vec}(Q_y)-\Gamma_{\zeta\zeta}\text{vec}((\bar{K}^P_{k})^T{R}\bar{K}^P_{k}), n_\zeta=(m+p)n$.

    The solvability of \eqref{pilinearout} is guaranteed by the following lemma \cite{rizvi2023}.
    \begin{lem}
    	The matrix $\bar{\Psi}_k^P$ has full column rank for {\color{black}$k\in\mathbb{N}$}, if
    	\begin{align}\label{rankconpiout}
    		\textup{rank}([\Gamma_{\zeta\zeta}, \Gamma_{\zeta u}])=& \frac{n_\zeta(n_\zeta+1)}{2}+mn_\zeta,
    	\end{align}
    	where $n_\zeta=(m+p)n$.
    \end{lem}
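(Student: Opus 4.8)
The plan is to prove full column rank by showing that the homogeneous equation
\[
\bar\Psi_k^P\,\col\!\big(\text{vecs}(Y_1),\,\text{vec}(Y_2)\big)=0,\qquad Y_1=Y_1^T\in\mathbb{R}^{n_\zeta\times n_\zeta},\ Y_2\in\mathbb{R}^{m\times n_\zeta},
\]
admits only $Y_1=0,\,Y_2=0$. First I would unfold this equation interval by interval: by the definition of $\bar\Psi_k^P$, its $j$-th row reads $|\zeta(t_j)|_{Y_1}-|\zeta(t_{j-1})|_{Y_1}-2\int_{t_{j-1}}^{t_j}\zeta^T(\bar K_k^P)^TRY_2\zeta\,d\tau+2\int_{t_{j-1}}^{t_j}u^TRY_2\zeta\,d\tau=0$. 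Introduce the known filter dynamics $\dot\zeta=\mathcal{A}_\zeta\zeta+\mathcal{B}_u u+\mathcal{B}_y y$, where $\mathcal{A}_\zeta=I_{m+p}\otimes\mathcal{A}$ and $[\mathcal{B}_u,\mathcal{B}_y]=I_{m+p}\otimes b$ (the first $m$ columns forming $\mathcal{B}_u$, the last $p$ forming $\mathcal{B}_y$), together with the asymptotic identity $y=CM\zeta$, which holds for $t\ge t_0$ with $t_0$ large and is exactly the approximation already used to derive \eqref{piintout}. Then $\frac{d}{dt}|\zeta|_{Y_1}=\zeta^T(\mathcal{A}_\zeta^TY_1+Y_1\mathcal{A}_\zeta)\zeta+2(\mathcal{B}_u u+\mathcal{B}_y CM\zeta)^TY_1\zeta$, and substituting the boundary term this way recasts the whole system as $[\Gamma_{\zeta\zeta},\Gamma_{\zeta u}]\,\col(v_1,v_2)=0$ with $v_1=\text{vec}(\mathcal{A}_\zeta^TY_1+Y_1\mathcal{A}_\zeta+2M^TC^T\mathcal{B}_y^TY_1-2(\bar K_k^P)^TRY_2)$ and $v_2=2\,\text{vec}(\mathcal{B}_u^TY_1+RY_2)$.

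Since $\Gamma_{\zeta\zeta}\text{vec}(N)=\int\zeta^TN\zeta\,d\tau$ depends only on the symmetric part of $N$, the rank hypothesis \eqref{rankconpiout} is precisely the statement that $[\Gamma_{\zeta\zeta},\Gamma_{\zeta u}]$ attains its maximal possible rank modulo this symmetry; hence it forces the symmetric part of the matrix inside $v_1$ to vanish and $\mathcal{B}_u^TY_1+RY_2=0$. The second relation yields $Y_2=-R^{-1}\mathcal{B}_u^TY_1$, and substituting this into the symmetrized first relation, using $\bar K_k^P=K_k^PM$, collapses everything into the Lyapunov equation
\[
A_{cl,k}^TY_1+Y_1A_{cl,k}=0,\qquad A_{cl,k}=\mathcal{A}_\zeta+\mathcal{B}_yCM+\mathcal{B}_uK_k^PM .
\]
Thus the claim reduces to showing that $A_{cl,k}$ is Hurwitz, for then this Lyapunov equation has only the solution $Y_1=0$, whence $Y_2=0$ and $\bar\Psi_k^P$ has full column rank.

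The step I expect to be the main obstacle is establishing that $A_{cl,k}$ is Hurwitz, which I would settle by an intertwining argument. From the companion realization and the partial-fraction form $(sI-A+LC)^{-1}=\Lambda(s)^{-1}\sum_{i}D_is^i$ with $D_{n-1}=I$, one verifies the matrix identities $M\mathcal{A}_\zeta=(A-LC)M$, $M\mathcal{B}_u=B$, and $M\mathcal{B}_y=L$. These give $MA_{cl,k}=(A-LC)M+LCM+BK_k^PM=(A+BK_k^P)M=A_kM$. Consequently $\ker M$ is $A_{cl,k}$-invariant and $A_{cl,k}$ coincides with $\mathcal{A}_\zeta$ on it, so the spectrum of $A_{cl,k}$ splits into the eigenvalues of $A_{cl,k}|_{\ker M}\subseteq\sigma(\mathcal{A}_\zeta)$ and those of the map induced on $\mathbb{R}^{n_\zeta}/\ker M$, which by $MA_{cl,k}=A_kM$ is similar to the restriction of $A_k$ to the $A_k$-invariant subspace $\mathrm{im}\,M$ and therefore lies in $\sigma(A_k)\subset\mathbb{C}_-$. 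Finally $\sigma(\mathcal{A}_\zeta)=\sigma(\mathcal{A})$ equals the set of roots of $\Lambda(s)=\det(sI-A+LC)$, i.e.\ the eigenvalues of the Hurwitz matrix $A-LC$, so $\sigma(\mathcal{A}_\zeta)\subset\mathbb{C}_-$ as well. Hence every eigenvalue of $A_{cl,k}$ has negative real part; invoking the property that $A_k$ is Hurwitz for every $k\in\mathbb{N}$, $A_{cl,k}$ is Hurwitz for every $k$, completing the argument. The delicate point throughout is handling the $n_\zeta-\mathrm{rank}\,M$ hidden modes of the filter, which are controlled precisely because they are confined to $\sigma(\mathcal{A}_\zeta)$, the stable spectrum dictated by the user-chosen polynomial $\Lambda$.
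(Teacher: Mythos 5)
Your proof is correct, and it supplies an argument the paper itself omits: for this lemma the paper only cites \cite{rizvi2023}, and for the analogous lemma on $\tilde{\Psi}_k^P$ it defers to Lemma 6 of \cite{jiang2012computational}. Your reduction --- rewriting the homogeneous system through the known filter dynamics \eqref{dyz} with $y=CM\zeta$, using the rank hypothesis \eqref{rankconpiout} to force the symmetric part of the $\Gamma_{\zeta\zeta}$-coefficient and all of the $\Gamma_{\zeta u}$-coefficient to vanish, and arriving at the homogeneous Lyapunov equation $(A_\zeta+B_\zeta\bar{K}_k^P)^TY_1+Y_1(A_\zeta+B_\zeta\bar{K}_k^P)=0$ --- is exactly that standard template. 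Where you genuinely depart from the paper is in proving that $A_{cl,k}=A_\zeta+B_\zeta\bar{K}_k^P$ is Hurwitz: you derive the intertwining $MA_{cl,k}=A_kM$ from Lemma \ref{lemrelation} and split the spectrum as $\sigma(A_{cl,k})\subseteq\sigma(A_k)\cup\sigma(\mathcal{A})$ via the $A_{cl,k}$-invariance of $\ker M$; the paper establishes the corresponding fact (Lemma \ref{lem6}) by a trajectory-convergence argument instead. Your spectral route is purely algebraic, makes the location of the ``hidden modes'' explicit, and, like the paper's Section \ref{sec3} but unlike Theorem \ref{thm3}, needs no full-row-rank assumption on $M$. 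Two caveats, both shared with the paper rather than defects of your argument, deserve an explicit sentence each: (i) $y=CM\zeta$ holds only up to the exponentially decaying $e_\zeta$, so the derivation is exact only in the idealization already used to obtain \eqref{piintout}; (ii) ``$A_k$ is Hurwitz for every $k$'' is not a standing hypothesis but the other half of a joint induction with this lemma, since solvability of \eqref{pilinearout} at step $k$ is what guarantees $\bar{K}^P_{k+1}=K^P_{k+1}M$ with $A_{k+1}$ Hurwitz.
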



%

   The iterative method using \eqref{pilinearout} is called the output-based PI method, which is summarized as Algorithm \ref{algpiout}.
   \begin{algorithm}
   	\caption{The output-based PI Algorithm \cite{rizvi2019}\cite{rizvi2023}}\label{algpiout}
   	\begin{algorithmic}[1]
   		\State {\color{black}Find a feedback gain $\bar{K}^P_{0}$ such that $\bar{K}^P_0\triangleq {K}_0^PM$ with $A+B{K}^P_0$ Hurwitz.} Apply an initial input $u^0=\bar{K}^P_{0}\zeta+\delta$ with exploration noise $\delta$. Choose a proper $t_0>0$ such that $||x(t)-M\zeta(t)||$ is small enough.
   		\State {\color{black}Collect} data from $t_0$ until the rank condition \eqref{rankconpiout} is satisfied. $k\leftarrow 0$.
   		\Repeat
   		\State Solve $\bar{P}^P_k,\bar{K}^P_{k+1}$ from (\ref{pilinearout}). $k\leftarrow k+1$.
   		\Until{$||\bar{P}^P_k-\bar{P}^P_{k-1}||<\varepsilon$ with a sufficiently small constant $\varepsilon>0$}.
   		\State $k^*\leftarrow k$.
   		\State Obtain the following optimal controller
   		\begin{align}\label{controllerpipre}
   			u^*=\bar{K}^P_{k^*}\zeta.
   		\end{align}
   	\end{algorithmic}
   \end{algorithm}

  The output-based PI method based on \eqref{pilinearout} requires a  stable initial policy $\bar{K}^P_0\triangleq {K}_0^PM$ with $A+B{K}^P_0$ Hurwitz  to start iteration. However, without the information of $A,B,C$, it is difficult to get such a stable initial policy $\bar{K}^P_0$ in advance. Moreover, even if a stabilizing ${K}_0^P$  is available, one still cannot get a  $\bar{K}^P_0$ 	since   $M$ is also unknown.
 To  overcome this difficulty,    \cite{rizvi2019} and \cite{rizvi2023} also proposed an output-based VI algorithm as follows.

   Let  $\bar{P}^V_k=M^T{P}^V_k M, \bar{H}^V_k=M^T(A^T{P}^V_k+{P}^V_kA+Q)M, \bar{K}^V_k=-{R}^{-1}B^T{P}^V_kM$. Then,  replacing $x(t)$ by $M\zeta(t)$ in \eqref{intvipre} for $t \geq t_0$ for some sufficiently large $t_0$ gives

   \begin{align}\label{intviout}
   	|\zeta(t+\delta t)|_{\bar{P}^V_k}-&|\zeta(t)|_{\bar{P}^V_k} +\int_{t}^{t+\delta t} |y|_{Q_y} d\tau\notag \\=&\int_{t}^{t+\delta t}\lbrack |\zeta|_{\bar{H}^V_k}-2{u}^T{R}\bar{K}^V_k\zeta \rbrack d\tau.
   \end{align}

   \eqref{intviout}  implies
   \begin{align}\label{vilinearout}
   	\bar{\Psi}^V\begin{bmatrix}
   		\text{vecs}(\bar{H}^V_k)\\
   		\text{vec}(\bar{K}^V_k)
   	\end{bmatrix}=\bar{\Phi}^V_k,
   \end{align}
   where $\bar{\Psi}^V=\begin{bmatrix}
   	I_{\zeta\zeta}&-2I_{\zeta u}
   \end{bmatrix}$ and $\bar{\Phi}^V_k=\delta_{\zeta}\text{vecs}(\bar{P}^V_k)+I_{yy}\text{vecs}(Q_y)$.

   The solvability of \eqref{vilinearout} is guaranteed by the following lemma \cite{rizvi2023}.
   \begin{lem}
   	The matrix $\bar{\Psi}^V$ has full column rank  if
   	\begin{align}\label{rankconviout}
   		\textup{rank}([I_{\zeta \zeta}, I_{\zeta u}])=& \frac{n_\zeta(n_\zeta+1)}{2}+mn_\zeta.
   	\end{align}
    \end{lem}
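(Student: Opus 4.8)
The plan is to recognize that $\bar{\Psi}^V=[I_{\zeta\zeta},\,-2I_{\zeta u}]$ is nothing but the data matrix $[I_{\zeta\zeta},\,I_{\zeta u}]$ with its second block of columns rescaled by the nonzero constant $-2$. Since such a rescaling is an invertible column operation, the two matrices share the same column rank, and the rank hypothesis \eqref{rankconviout} then forces $\bar{\Psi}^V$ to attain its maximal possible column rank. This mirrors exactly the argument for the state-based lemma on $\Psi^V=[I_{xx},\,-2I_{xu}]$.

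First I would record the dimensions so that ``full column rank'' can be made precise. Because $\zeta\in\mathbb{R}^{n_\zeta}$ with $n_\zeta=(m+p)n$, the block $I_{\zeta\zeta}$ (built from $\textup{vecv}(\zeta)$) contributes $\frac{n_\zeta(n_\zeta+1)}{2}$ columns, while the block $I_{\zeta u}$ (built from $\zeta\otimes Ru$ with $Ru\in\mathbb{R}^m$) contributes $mn_\zeta$ columns. Hence $\bar{\Psi}^V$ has exactly $\frac{n_\zeta(n_\zeta+1)}{2}+mn_\zeta$ columns, which is precisely the integer on the right-hand side of \eqref{rankconviout}. Then I would exhibit the invertible transformation explicitly: setting
\[
T=\mbox{blockdiag}\!\left(I_{\frac{n_\zeta(n_\zeta+1)}{2}},\,-2\,I_{mn_\zeta}\right),
\]
one has $\bar{\Psi}^V=[I_{\zeta\zeta},\,I_{\zeta u}]\,T$. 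As $T$ is square with $\det T=(-2)^{mn_\zeta}\neq 0$, it is nonsingular, and right multiplication by a nonsingular matrix preserves column rank; therefore $\textup{rank}(\bar{\Psi}^V)=\textup{rank}([I_{\zeta\zeta},\,I_{\zeta u}])$.

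Finally, under hypothesis \eqref{rankconviout} this common rank equals $\frac{n_\zeta(n_\zeta+1)}{2}+mn_\zeta$, i.e. the total number of columns of $\bar{\Psi}^V$, so $\bar{\Psi}^V$ has full column rank, which also guarantees the unique solvability of \eqref{vilinearout}. I do not expect any genuine obstacle here: the entire content is the observation that the scalar $-2$ is nonzero and the dimensional bookkeeping of the two block widths; no spectral or system-theoretic property of $(\mathcal{A},b)$ or of the data trajectory is invoked beyond what is already encoded in the rank condition itself.
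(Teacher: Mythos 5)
Your proposal is correct: the paper states this lemma without proof (citing \cite{rizvi2023}), and the intended argument is exactly the one you give, namely that $\bar{\Psi}^V=[I_{\zeta\zeta},\,I_{\zeta u}]\,T$ with $T=\mbox{blockdiag}(I_{\frac{n_\zeta(n_\zeta+1)}{2}},-2I_{mn_\zeta})$ nonsingular, so the rank hypothesis \eqref{rankconviout} immediately gives full column rank. Your dimensional bookkeeping ($\frac{n_\zeta(n_\zeta+1)}{2}$ columns from $\textup{vecv}(\zeta)$ and $mn_\zeta$ from $\zeta\otimes Ru$) is also accurate, so there is nothing to add.
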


  The above so-called output-based VI algorithm from  \cite{rizvi2019} and \cite{rizvi2023} is summarized as Algorithm \ref{vialgout}, where $\epsilon_k$ and $\varepsilon$ are defined in the same way as those in Section \ref{sec2-2},  $\{\bar{B}_q\}_{q=0}^{\infty}$ is a collection of bounded subsets in $\mathcal{P}^{n_\zeta}$ satisfying
  \begin{align}
  	\bar{B}_q\subset \bar{B}_{q+1}, \; q\in \mathbb{N}, \; \lim\limits_{q\to \infty}\bar{B}_q=\mathcal{P}^{n_\zeta}.
  \end{align}

  \begin{algorithm}
  	\caption{{\color{black}Output-based} VI  Algorithm  \cite{rizvi2019}\cite{rizvi2023}}\label{vialgout}
  	\begin{algorithmic}[1]
  		\State  Apply any locally essentially bounded initial input $u^0$. Choose a proper $t_0>0$ such that $||x(t)-M\zeta(t)||$ is small enough.
  		{\color{black}Collect} data starting from $t_0$ until the rank condition \eqref{rankconviout} is satisfied.
  		\State Choose $\bar{P}^V_0=(\bar{P}^V_0)^T>0$. $k,q\leftarrow 0$.
  		\Loop
  		\State Solve $\bar{H}^V_k$ and $\bar{K}^V_k$ from \eqref{vilinearout}.
  		\State $\tilde{\bar{P}}^V_{k+1}\leftarrow \bar{P}^V_k+\epsilon_k (\bar{H}^V_k-(\bar{K}^V_k)^TR\bar{K}^V_k)$.
  		\If{$\tilde{\bar{P}}^V_{k+1}\notin \bar{B}_q$}
  		\State $\bar{P}^V_{k+1}\leftarrow \bar{P}^V_0$. $q\leftarrow q+1$.
  		\ElsIf{$||\tilde{\bar{P}}^V_{k+1}-\bar{P}^V_k||/\epsilon_k<\varepsilon$}
  		\State \Return $\bar{P}^V_k$ and $\bar{K}^V_k$ as  approximations to $\bar{P}^*\triangleq M^TP^*M$ and $\bar{K}^*\triangleq K^*M$, respectively.
  		\Else
  		\State $\bar{P}^V_{k+1}\leftarrow \tilde{\bar{P}}^V_{k+1}$.
  		\EndIf
  		\State $k \leftarrow k+1$.
  		\EndLoop
  	\end{algorithmic}
  \end{algorithm}

  The following theorem summarized from Theorem 2.13 in  \cite{rizvi2023} {\color{black}ensures} the convergence of Algorithm \ref{algpiout} and Algorithm \ref{vialgout}.

  \begin{thm}\label{thm3}
  	Under Assumptions \ref{assctrl} and \ref{ass2}, let $(A,\sqrt{{Q_y}}C)$ be observable, if $M$ has full row rank and the rank condition \eqref{rankconpiout} and \eqref{rankconviout} are satisfied, $\bar{P}^P_k, \bar{K}^P_{k}$ generated from Algorithm \ref{algpiout} and $\bar{P}^V_k, \bar{K}^V_{k}$ generated from Algorithm \ref{vialgout} are such that
  	\begin{align*}
  			\lim\limits_{k\to \infty}\bar{P}^P_k=&M^TP^*M\\
  		\lim\limits_{k\to \infty}\bar{K}^P_k=&K^*M\\
  		\lim\limits_{k\to \infty}\bar{P}^V_k=&M^TP^*M\\
  		\lim\limits_{k\to \infty}\bar{K}^V_k=&K^*M.
  	\end{align*}
  \end{thm}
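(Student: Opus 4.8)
The plan is to reduce both output-based algorithms to their state-based counterparts through the asymptotic identity $x(t)=M\zeta(t)$ (exact in the limit and used as an idealization for the large $t_0$ prescribed in Algorithms \ref{algpiout} and \ref{vialgout}), and then to invoke the convergence of the model-based iterations established in Section \ref{sec2-1} and for Algorithm \ref{vialg1}. For the PI algorithm I would argue by induction that $\bar{P}^P_k=M^TP^P_kM$ and $\bar{K}^P_k=K^P_kM$ for all $k$, where $(P^P_k,K^P_k)$ are the model-based Kleinman iterates \eqref{aleq1sin}. The base case is the initialization $\bar{K}^P_0=K^P_0M$ with $A+BK^P_0$ Hurwitz. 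For the inductive step, substituting $x=M\zeta$ into the state identity \eqref{piint}, which $(P^P_k,K^P_{k+1})$ satisfies by \eqref{aleq1sin}, produces exactly \eqref{piintout} with $\bar{P}^P_k=M^TP^P_kM$, $\bar{K}^P_k=K^P_kM$ and $\bar{K}^P_{k+1}=K^P_{k+1}M$; hence this triple solves \eqref{pilinearout}. Since the rank condition \eqref{rankconpiout} makes $\bar{\Psi}^P_k$ have full column rank, the solution is unique, so the iterates of Algorithm \ref{algpiout} coincide with the $M$-images of the state iterates, and $P^P_k\to P^*$, $K^P_k\to K^*$ from Section \ref{sec2-1} delivers the first two limits. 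Note that this route never meets a semidefinite Riccati equation, because it rides on the state iterates where $P^*>0$.

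For the VI algorithm this exact correspondence breaks down, since Algorithm \ref{vialgout} starts from $\bar{P}^V_0>0$ in $\mathcal{P}^{n_\zeta}$, which cannot equal $M^TP^V_0M$ (the latter having rank at most $n<n_\zeta$). I would instead view $\zeta$ as the state of a \emph{known} augmented linear system. Writing $y=CM\zeta$, valid in the limit $x=M\zeta$, the filter \eqref{observer2} yields closed dynamics $\dot{\zeta}=\mathbb{A}\zeta+\mathbb{B}u$ with $\mathbb{A},\mathbb{B}$ determined by $\mathcal{A}$, $b$ and $CM$, and comparing $M\dot{\zeta}=\dot{x}$ (or verifying directly from the definition of $M$) gives the intertwining relations $M\mathbb{A}=AM$ and $M\mathbb{B}=B$. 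Solving \eqref{vilinearout} for an arbitrary symmetric $\bar{P}^V_k$, with uniqueness again from \eqref{rankconviout}, returns $\bar{H}^V_k=\mathbb{A}^T\bar{P}^V_k+\bar{P}^V_k\mathbb{A}+\bar{Q}$ and $\bar{K}^V_k=-R^{-1}\mathbb{B}^T\bar{P}^V_k$, where $\bar{Q}=M^TQM$. Consequently Step 5 of Algorithm \ref{vialgout} is identical to Step 3 of Algorithm \ref{vialg1} applied to the quadruple $(\mathbb{A},\mathbb{B},\bar{Q},R)$, with the $\bar{B}_q$ serving as the bounded subsets of $\mathcal{P}^{n_\zeta}$ that Algorithm \ref{vialg1} requires for an $n_\zeta$-dimensional system; thus Algorithm \ref{vialgout} is precisely the model-based VI run on the $\zeta$-system.

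It then remains to invoke the VI convergence of Algorithm \ref{vialg1} on the $\zeta$-system, giving $\bar{P}^V_k\to\bar{P}^*$, the stabilizing solution of $\mathbb{A}^T\bar{P}+\bar{P}\mathbb{A}+\bar{Q}-\bar{P}\mathbb{B}R^{-1}\mathbb{B}^T\bar{P}=0$. Using $M\mathbb{A}=AM$ and $M\mathbb{B}=B$ one checks that $M^TP^*M$ solves this equation and that its closed loop $\mathbb{A}+\mathbb{B}K^*M$ is Hurwitz, since the $M$-image of $\zeta$ evolves under the Hurwitz $A+BK^*$ while the complementary modes evolve under the Hurwitz filter $\mathcal{A}$. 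By uniqueness of the stabilizing solution, $\bar{P}^*=M^TP^*M$, whence $\bar{K}^V_k=-R^{-1}\mathbb{B}^T\bar{P}^V_k\to-R^{-1}B^TP^*M=K^*M$. The hypothesis that $M$ has full row rank enters here, ensuring that $\bar{Q}=M^TQM$ preserves the detectability inherited from observability of $(A,\sqrt{Q_y}C)$ and that the identification $\bar{P}^*=M^TP^*M$ is tight.

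The hard part is this final VI convergence step. Because $\bar{Q}=M^TQM$ penalizes only the $M$-image of $\zeta$, the pair $(\mathbb{A},\sqrt{\bar{Q}})$ is merely detectable rather than observable, so $\bar{P}^*$ is only positive semidefinite. The convergence quoted for Algorithm \ref{vialg1} was proved under observability of $(A,\sqrt{Q_y}C)$, i.e. for a positive definite Riccati solution, and so does not directly cover the $\zeta$-system; the PI route escapes this difficulty only because it rides on the state iterates. Establishing that the VI iteration still converges when the Riccati equation admits only a unique positive semidefinite solution is therefore the genuine obstacle, and it is precisely the generalized positive-semidefinite convergence analysis of the appendices that resolves it.
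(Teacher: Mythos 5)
Your proposal is essentially sound, but note that the paper does not actually prove this theorem: it is imported wholesale from Theorem~2.13 of \cite{rizvi2023}, so there is no in-paper proof to match against. Judged on its own terms, your PI half is the standard and correct argument --- the induction $\bar{P}^P_k=M^TP^P_kM$, $\bar{K}^P_k=K^P_kM$ closes because the candidate triple satisfies \eqref{piintout} under the idealization $x=M\zeta$ and the rank condition \eqref{rankconpiout} forces uniqueness, after which the Kleinman convergence of Section~\ref{sec2-1} (available here since $P^*>0$ under the theorem's observability hypothesis) finishes the job. Your VI half is more interesting: rather than reconstructing Rizvi--Lin's argument, you have in effect re-derived the paper's own Section~\ref{sec3} machinery --- your $(\mathbb{A},\mathbb{B})$ is exactly the ancillary pair $(A_\zeta,B_\zeta)$ of \eqref{dyz2bf}, your intertwining relations $M\mathbb{A}=AM$, $M\mathbb{B}=B$ are the identities used in the proof of Theorem~\ref{thm4}, your Hurwitz claim for $\mathbb{A}+\mathbb{B}K^*M$ is Lemma~\ref{lem6}, and the identification $\bar{P}^*=M^TP^*M$ is Theorem~\ref{thm4} itself. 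You are also right that the genuine obstacle is VI convergence when the ancillary Riccati equation admits only a positive semi-definite solution, and that this is exactly what Appendix~\ref{app2} (Theorem~\ref{thmconvi}) supplies; with that theorem in hand your chain of reductions is complete. One correction: you place the full-row-rank hypothesis on $M$ where it does not belong. Detectability of $(A_\zeta,\sqrt{Q_y}CM)$ follows from the output-injection $L_\zeta$ construction of Lemma~\ref{lem7} (or, for $Q_y\geq 0$, from the observation that any unobservable mode $v$ must satisfy $Mv=0$ and hence be an eigenvector of the Hurwitz $I_{m+p}\otimes\mathcal{A}$), and the identification $\bar{P}^*=M^TP^*M$ in Theorem~\ref{thm4} likewise needs no rank condition on $M$; indeed, dispensing with that hypothesis is one of the paper's advertised contributions, and the full-row-rank assumption in the statement is an artifact of Rizvi--Lin's original proof route, which your argument largely bypasses.
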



    \subsection{ A New Result}
   It is noted that the Theorem 2.13 of \cite{rizvi2023} requires the linear system \eqref{lisys} to be controllable, and  the $M$ matrix to have full row rank.
   In this subsection, we will show that Assumption \ref{assctrl} implies that  $M$ is of full row rank.  Moreover, in the next section, we will develop a new approach
   that  does not require  the full row rank condition of $M$, and the controllable condition Assumption \ref{assctrl} can be extended to the stabilizable condition Assumption \ref{ass1}.

   For this purpose, note that
  by \eqref{observer2} and the definition of $\zeta$,   we have
  \begin{align}\label{dyz}
  	\dot{\zeta}=
  	(I_{m+p}\otimes \mathcal{A})
  	\zeta+\begin{bmatrix}
  		I_m\otimes b\\ {\bf0}
  	\end{bmatrix}u+\begin{bmatrix}
  		{\bf0}\\I_p\otimes b
  	\end{bmatrix}y.
  \end{align}


  For further discussion, we give the following useful relationships.
  \begin{lem}\label{lemrelation}
  Let $\Lambda(s) = \textup{det}(sI-A+LC)$. Then,
  	\begin{subequations}
  		\begin{align}\label{rela1}
  		M 	(I_{m+p}\otimes \mathcal{A}) =&(A-LC)M\\ \label{rela2}
  		M\begin{bmatrix}
  			I_m\otimes b\\ {\bf0}
  		\end{bmatrix}=&B\\ \label{rela3}
  		M\begin{bmatrix}
  			{\bf0}\\I_p\otimes b
  		\end{bmatrix}=&L.
  	\end{align}
  	\end{subequations}
  \end{lem}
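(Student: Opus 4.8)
The plan is to reduce all three identities to a single algebraic fact about the matrix coefficients $D_i$, namely the recurrence generated by the resolvent expansion. Writing $\bar{A} = A - LC$ for brevity, the defining relation $(sI-\bar{A})^{-1}\Lambda(s) = D_{n-1}s^{n-1}+\cdots+D_0$ is equivalent to the polynomial identity $(sI-\bar{A})(D_{n-1}s^{n-1}+\cdots+D_0) = \Lambda(s)I_n$. First I would expand the left-hand side and match the coefficient of each power of $s$ against $\Lambda(s)=s^n+\alpha_{n-1}s^{n-1}+\cdots+\alpha_0$. This yields $D_{n-1}=I_n$, the recurrence $D_{k-1}=\bar{A}D_k+\alpha_k I_n$ for $1\le k\le n-1$, and the terminal relation $\bar{A}D_0=-\alpha_0 I_n$. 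These three facts are the only analytic input needed; everything else is bookkeeping with the block structure of $M$.

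Next I would dispatch \eqref{rela2} and \eqref{rela3}, which are immediate. Since $b$ is the $n$-th standard basis vector, the block-diagonal selector $I_m\otimes b$ extracts from each $M_i=[D_0 f_i,\cdots,D_{n-1}f_i]$ only its last column, so $M_i b = D_{n-1}f_i = f_i$ by $D_{n-1}=I_n$. Collecting the blocks $i=1,\cdots,m$ gives $[f_1,\cdots,f_m]=B$, which is \eqref{rela2}; collecting $i=m+1,\cdots,m+p$ gives the columns of $L$, which is \eqref{rela3}. The top and bottom zero blocks in the two selector matrices simply restrict each product to the input-indexed or output-indexed blocks of $M$.

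The substantive identity is \eqref{rela1}. Because $I_{m+p}\otimes\mathcal{A}$ is block diagonal, it suffices to prove $M_i\mathcal{A}=\bar{A}M_i$ for each single block $i$, which I would verify column by column using the companion structure of $\mathcal{A}$. The first column of $\mathcal{A}$ equals $-\alpha_0 e_n$, so the first column of $M_i\mathcal{A}$ is $-\alpha_0 M_i e_n = -\alpha_0 f_i = \bar{A}D_0 f_i$ by the terminal relation, matching the first column of $\bar{A}M_i$. For $2\le k\le n$ the $k$-th column of $\mathcal{A}$ equals $e_{k-1}-\alpha_{k-1}e_n$, so the $k$-th column of $M_i\mathcal{A}$ is $D_{k-2}f_i-\alpha_{k-1}f_i$; rearranging the recurrence as $\bar{A}D_{k-1}=D_{k-2}-\alpha_{k-1}I_n$ shows this equals $\bar{A}D_{k-1}f_i$, which is the $k$-th column of $\bar{A}M_i$. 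I expect the only real obstacle to be getting the coefficient recurrence and the companion-matrix index shifts exactly right; once the recurrence is in hand, \eqref{rela1} falls out mechanically.
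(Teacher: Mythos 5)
Your proposal is correct and follows essentially the same route as the paper: both rest on the coefficient recurrence $D_{n-1}=I$, $D_{k-1}=(A-LC)D_k+\alpha_k I$, $(A-LC)D_0=-\alpha_0 I$, then verify $M_i\mathcal{A}=(A-LC)M_i$ column by column via the companion structure and obtain \eqref{rela2}--\eqref{rela3} from $M_ib=D_{n-1}f_i=f_i$. The only cosmetic difference is that you derive the recurrence by matching coefficients in $(sI-A+LC)(D_{n-1}s^{n-1}+\cdots+D_0)=\Lambda(s)I$, whereas the paper simply cites it as a known identity.
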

  \begin{proof}
  	It is well known that, for example, see  Problem 3.26 in \cite{ctchenbook},  the following  relationship holds:
  {\color{black}	\begin{align}\label{recursion}
  		\begin{split}
  			D_{n-1}=&I\\
  			D_{n-2}=&(A-LC)D_{n-1}+\alpha_{n-1}I\\
  			\vdots \\
  			D_0=&(A-LC)D_{1}+\alpha_{1}I\\
  			0=&(A-LC)D_{0}+\alpha_{0}I.
  		\end{split}
  	\end{align}
  }
  	Post-multiplying $f_i$ on both sides of \eqref{recursion} gives
  {\color{black}	\begin{align*}
  		\begin{split}
  			D_{n-1}f_i=&f_i\\
  			D_{n-2}f_i-\alpha_{n-1}f_i=&(A-LC)D_{n-1}f_i\\
  			\vdots \\
  			D_0f_i-\alpha_{1}f_i=&(A-LC)D_{1}f_i\\
  			-\alpha_{0}f_i=&(A-LC)D_{0}f_i.
  		\end{split}
  	\end{align*}
  }
  	Thus,
  	\begin{align*}
  		&M_i\mathcal{A}\\=&[-\alpha_{0}D_{n-1}f_i, -\alpha_{1}D_{n-1}f_i+D_0f_i, \cdots,\\&-\alpha_{n-1}D_{n-1}f_i+D_{n-2}f_i]\\
  		=&[-\alpha_{0}f_i, D_0f_i-\alpha_{1}f_i, \cdots,D_{n-2}f_i-\alpha_{n-1}f_i]\\
  		=&[(A-LC)D_{0}f_i, (A-LC)D_{1}f_i, \cdots,(A-LC)D_{n-1}f_i]\\
  		=&(A-LC)M_i, i=1,2,\cdots,m+p,
  	\end{align*}
  	which implies
  	\begin{align*}
  		M (I_{m+p}\otimes \mathcal{A})=&[M_1\mathcal{A}, M_2\mathcal{A},\cdots,M_{m+p}\mathcal{A}]\notag \\
  		=&(A-LC)M.
  	\end{align*}
  	
  	Thus, \eqref{rela1} holds.
  	
  	Since $M_i b = D_{n-1} f_i$ for $i = 1, \cdots, m+p$, we have
  	\begin{align*}
  		M	(I_{m+p}\otimes b)
  		=&[M_1b,M_2b,\cdots,M_{m+p}b]\\
  		=&[f_1, f_2, \cdots, f_{m+p}] = [B~~ L].
  	\end{align*}
  	Thus, \eqref{rela2} and \eqref{rela3} hold.
  \end{proof}

  {\color{black}
  	Based on  Lemma \ref{lemrelation}, we have the following result.
  	\begin{prop}\label{prop1}
  		Under Assumption \ref{assctrl},  $M$ has full row rank.
  	\end{prop}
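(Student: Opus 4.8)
The plan is to show that the column space $\mathcal{V} \triangleq \col(M) \subseteq \mathbb{R}^n$ is in fact all of $\mathbb{R}^n$ by exhibiting it as a controllability subspace. The three identities in Lemma \ref{lemrelation} supply exactly the structure needed: relation \eqref{rela1} shows that $\mathcal{V}$ is $(A-LC)$-invariant, since $(A-LC)M = M(I_{m+p}\otimes\mathcal{A})$ gives $(A-LC)\mathcal{V} = \col((A-LC)M) \subseteq \col(M) = \mathcal{V}$; while relations \eqref{rela2} and \eqref{rela3} show that $\col(B) \subseteq \mathcal{V}$ and $\col(L) \subseteq \mathcal{V}$, hence $\col([B~~L]) \subseteq \mathcal{V}$.

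First I would combine these observations. Iterating \eqref{rela1} yields $M(I_{m+p}\otimes\mathcal{A})^j = (A-LC)^j M$ for every $j\in\mathbb{N}$, so applying this to the selector matrices appearing in \eqref{rela2}--\eqref{rela3} gives $(A-LC)^j[B~~L] = M(I_{m+p}\otimes\mathcal{A})^j(I_{m+p}\otimes b)$, whose columns therefore lie in $\mathcal{V}$. Consequently the image of the controllability matrix $\mathcal{C} = [\,[B~~L],\, (A-LC)[B~~L],\, \cdots,\, (A-LC)^{n-1}[B~~L]\,]$ of the pair $(A-LC,[B~~L])$ is contained in $\mathcal{V}$. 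It then remains only to prove that $(A-LC,[B~~L])$ is controllable, for then $\col(\mathcal{C}) = \mathbb{R}^n$ forces $\mathcal{V} = \mathbb{R}^n$, i.e., $M$ has full row rank.

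The crux --- and the only genuinely nontrivial step --- is establishing controllability of $(A-LC,[B~~L])$ from Assumption \ref{assctrl}. Here I would invoke the Popov--Belevitch--Hautus test: suppose, for contradiction, that some nonzero left eigenvector $v$ satisfies $v^*(A-LC) = \lambda v^*$ together with $v^*[B~~L] = 0$. The latter forces both $v^* B = 0$ and $v^* L = 0$; substituting $v^* L = 0$ into the eigenvector equation yields $v^*(A-LC) = v^* A = \lambda v^*$, so $v$ is a left eigenvector of $A$ with $v^* B = 0$, contradicting the controllability of $(A,B)$. Hence no such $v$ exists and $(A-LC,[B~~L])$ is controllable. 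It is essential to use both \eqref{rela2} and \eqref{rela3}: the output-injection term $-LC$ is annihilated precisely because $v^* L = 0$, which is why the pair must be augmented with $L$ rather than worked with $B$ alone. Note also that the Hurwitz property of $A-LC$ plays no role in this rank argument; only the algebraic relations of Lemma \ref{lemrelation} and the controllability of $(A,B)$ are used.
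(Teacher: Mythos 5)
Your proof is correct. It rests on the same essential reduction as the paper's: everything comes down to the controllability of the pair $(A-LC,[B~~L])$, which both you and the paper deduce from Assumption \ref{assctrl} via the PBH test (the paper phrases this as right-multiplying $[\lambda I_n-A,\,B,\,L]$ by an invertible matrix to preserve rank; your left-eigenvector argument, where $v^*L=0$ kills the output-injection term, is the same test in its other standard form). Where you genuinely diverge is in how $M$ is tied to that controllability matrix. The paper rearranges the columns of $M$ into $M'=[D_0,\cdots,D_{n-1}](I_n\otimes[B~~L])$ and, using the explicit recursion \eqref{recursion2} for the $D_i$, factors $M'$ as the controllability matrix $T_c$ times an invertible block lower-triangular Toeplitz matrix, thereby obtaining the exact equality $\textup{rank}(M)=\textup{rank}(T_c)$. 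You instead treat the three identities of Lemma \ref{lemrelation} as a black box and show only the containment $\col\bigl((A-LC)^j[B~~L]\bigr)\subseteq\col(M)$ for all $j$, hence $\col(T_c)\subseteq\col(M)$, which is all that full row rank requires. Your route is more modular --- it never needs the structure of the $D_i$ or the Toeplitz factorization --- at the cost of delivering only a one-sided inclusion rather than the paper's rank equality; for the proposition as stated, both are fully adequate.
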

  	\begin{proof}
  		Let $M'=\begin{bmatrix}
  			D_0&D_1&\cdots&D_{n-1}
  		\end{bmatrix}(I_{n}\otimes \begin{bmatrix}
  			B& L
  		\end{bmatrix})$. Then, $\textup{rank}(M')=\textup{rank}(M)$ since $M'$ can be obtained by rearranging the columns of $M$. Thus, $M$ has full row rank if and only if $M'$ has full row rank.
  		
  		From  \eqref{recursion}, we have
  		\begin{align}\label{recursion2}
  			\begin{split}
  				D_{n-1}=&I\\
  				D_{n-2}=&(A-LC)+\alpha_{n-1}I\\
  				D_{n-3}=&(A-LC)^2+\alpha_{n-1}(A-LC)+\alpha_{n-2}I\\
  				\vdots \\
  				D_0=&(A-LC)^{n-1}+\alpha_{n-1}(A-LC)^{n-2}+\cdots+\alpha_{1}I.
  			\end{split}
  		\end{align}
  		Let
  	\begin{small}
  			\begin{align*}
  			T_c=\begin{bmatrix}
  				(A-LC)^{n-1}[B ~ L] & (A-LC)^{n-2}[B ~ L]& \cdots & [B ~ L]
  			\end{bmatrix}.
  		\end{align*}
  	\end{small}
  		Then, $M'$ can be expressed as
  		\begin{align*}
  			M'=&T_c \left (\begin{bmatrix}
  				1&0&0&\cdots&0\\
  				\alpha_{n-1}&1&0&\cdots&0\\
  				\alpha_{n-2}&\alpha_{n-1}&1&\cdots&0\\
  				\vdots&\vdots&\vdots&\cdots&\vdots\\
  				\alpha_{1}&\alpha_{2}&\alpha_{3}&\cdots&1
  			\end{bmatrix}\otimes I_{m+p} \right).
  		\end{align*}
  		Thus,  $M'$ has full row rank if and only if $T_c$ has full row rank, {\color{black}which is equivalent to  $(A-LC, [B~L])$ is controllable.} Thus, it suffices  to prove that Assumption \ref{assctrl} implies  the controllability of  $(A-LC, [B~L])$ .
  		
  		By the  PBH test,  Assumption \ref{assctrl} implies  $\textup{rank}([\lambda I_n-A,B])=n, \forall \lambda\in \mathbb{C}$. Since
  		\begin{align*}
  			&\begin{bmatrix}
  				\lambda I_n-A+LC& B&L
  			\end{bmatrix}\\=&\begin{bmatrix}
  				\lambda I_n-A&B&L
  			\end{bmatrix}\begin{bmatrix}
  				I_n &{\bf0}&{\bf0}\\ {\bf0}&I_m&{\bf0}\\ C&{\bf0}& I_p
  			\end{bmatrix}, \forall \lambda\in \mathbb{C},
  		\end{align*}
  		we have $\textup{rank}([\lambda I_n-A+LC,B,L])=\textup{rank}([\lambda I_n-A,B,L])$, $\forall \lambda\in \mathbb{C}$. Thus, Assumption \ref{assctrl}  implies  $\textup{rank}([\lambda I_n-A,B,L])=n, \forall \lambda\in \mathbb{C}$,  or what is the same,  the controllability of  $(A-LC, [B~L])$.
  		
  		In conclusion, Assumption \ref{assctrl} implies $M$ is of full row rank.
  		
  	\end{proof}
  	
  	\begin{rem}
  	It was shown in  Theorem 2.10 and Theorem 2.11 of \cite{rizvi2023} that $M$ has full row rank if $(A-LC,B)$ or $(A-LC,L)$ is controllable, or $A$ and $A-LC$ have no common eigenvalues. In contrast,  Proposition \ref{prop1} concludes that controllability of  $(A,B)$  is enough to guarantee that $M$ is of  full row rank.
  	\end{rem}

   \section{A New Approach to the Output-Based ADP Algorithms}\label{sec3}
 {\color{black}We have seen that} the approach of \cite{rizvi2019} and  \cite{rizvi2023} requires the matrix $M$ to be
  of full row rank, and needs to solve a sequence of linear algebraic equations with $\frac{n_\zeta(n_\zeta+1)}{2} + m n_\zeta$ unknown variables.
  Even though Proposition  \ref{prop1} showed that $M$ is of full row rank if the plant is controllable, it is still desirable that the approach also applies to stabilizable systems.
 In this section, we will present a new approach that imposes no requirement on the rank of $M$ and that only needs to
  solve a sequence of linear algebraic equations with  $\frac{n_\zeta(n_\zeta+1)}{2}$ unknown variables,
  thus reducing the number of unknown variables of the existing algorithms by $m n_\zeta$. Moreover,
  the existing  PI or VI algorithm only applies to the case where  some Riccati equation admits a unique positive definite solution. However,
  when it comes to the
  output-based PI or VI algorithm, one can only guarantee that
  the solution of the Riccati equation is positive semi-definite. To fill this gap, we further show that the PI algorithm as given in \cite{Kleinman}  and the VI algorithm as given in
  \cite{bian2016} can be generalized to the case where  the Riccati equation only admits a unique positive semi-definite solution.

  \subsection{An Ancillary System and Its LQR Problem}\label{sec3-0}
	Let $e_\zeta\triangleq M\zeta(t)-x$. Then
    substituting $y=Cx=CM\zeta-Ce_\zeta$ into   \eqref{dyz} gives
   \begin{align}\label{dyz2bf}
     	\dot{\zeta}=A_\zeta\zeta+B_\zeta u+d_\zeta,
     \end{align}
     where
     \begin{align*}
     	A_\zeta&=
     		(I_{m+p}\otimes \mathcal{A})+\begin{bmatrix}
     	{\bf0}\\I_p\otimes b
     	\end{bmatrix}CM\\
     	B_\zeta&=\begin{bmatrix}
     		I_m\otimes b\\ {\bf0}
     	\end{bmatrix},d_\zeta=-\begin{bmatrix}
     	{\bf0}\\I_p\otimes b
     	\end{bmatrix}Ce_\zeta.
     \end{align*}

    What makes \eqref{dyz2bf} interesting is that $A_\zeta$ is an unknown matrix since $CM$ is unknown, but  $B_\zeta$ is known.
    We will first {\color{black}prove} that
     $(A_\zeta,B_\zeta)$ is  stabilizable.
     \begin{lem}\label{lem6}
     	Under Assumption \ref{ass1} and \ref{ass2}, let $\Lambda(s)=\textup{det}(sI-A+LC)$ be Hurwitz. Then, the pair $(A_\zeta, B_\zeta)$  is stabilizable.
     \end{lem}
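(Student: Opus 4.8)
The plan is to establish stabilizability constructively, by exhibiting an explicit stabilizing feedback of the form $u = KM\zeta$, where $K$ is any gain making $A+BK$ Hurwitz (such a $K$ exists by Assumption \ref{ass1}). The motivation is that this feedback mimics the state feedback $u = Kx$ through the parametrization $x \approx M\zeta$, so the closed loop should inherit the Hurwitz spectrum of $A+BK$ together with the (Hurwitz) filter poles. The key enabling identity is the intertwining relation $MA_\zeta = AM$, which I would record first: from \eqref{rela1} and \eqref{rela3} one has $MA_\zeta = M(I_{m+p}\otimes\mathcal A) + M\begin{bmatrix}{\bf0}\\I_p\otimes b\end{bmatrix}CM = (A-LC)M + LCM = AM$. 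Combining this with $MB_\zeta = B$ from \eqref{rela2} and setting $\bar A \triangleq A_\zeta + B_\zeta K M$ gives $M\bar A = MA_\zeta + MB_\zeta K M = AM + BKM = (A+BK)M$. It then suffices to prove that $\bar A$ is Hurwitz.

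The central step is to exploit that $\ker M$ is an $\bar A$-invariant subspace on which the unstable modes cannot hide. From $M\bar A = (A+BK)M$, any $\zeta\in\ker M$ satisfies $M\bar A\zeta = (A+BK)M\zeta = 0$, so $\bar A(\ker M)\subseteq\ker M$. Moreover, on $\ker M$ both the feedback term $B_\zeta KM$ and the perturbation $\begin{bmatrix}{\bf0}\\I_p\otimes b\end{bmatrix}CM$ vanish (since $M\zeta=0$ implies $CM\zeta=0$), so $\bar A|_{\ker M}$ coincides with $(I_{m+p}\otimes\mathcal A)|_{\ker M}$. Because $\sigma(I_{m+p}\otimes\mathcal A)$ equals the set of roots of $\Lambda(s)$, which is Hurwitz by hypothesis, the eigenvalues of this restriction all lie in $\mathbb C_-$. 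On the quotient $\mathbb R^{n_\zeta}/\ker M$, the map induced by $M$ is an isomorphism onto $\textup{im}(M)$, and $M\bar A = (A+BK)M$ shows the induced action of $\bar A$ is similar to $(A+BK)$ restricted to the $(A+BK)$-invariant subspace $\textup{im}(M)$, whose spectrum is contained in $\sigma(A+BK)\subset\mathbb C_-$. Writing $\bar A$ in block-triangular form adapted to a splitting $\ker M\oplus W$ yields $\det(sI-\bar A)=\det(sI-\bar A|_{\ker M})\det(sI-\bar A_{\mathrm{quot}})$, so both factors are Hurwitz and hence $\bar A$ is Hurwitz; this makes $u=KM\zeta$ a stabilizing feedback and proves $(A_\zeta,B_\zeta)$ is stabilizable.

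The delicate point, and the reason a naive argument fails, is that under Assumption \ref{ass1} alone the matrix $M$ need not have full row rank, so one cannot simply transport stabilizability of $(A,B)$ to $(A_\zeta,B_\zeta)$ through a coordinate change. The $\ker M$–invariance argument is precisely what circumvents this: it isolates the possibly uncontrollable filter modes inside $\ker M$ and shows they are automatically asymptotically stable thanks to the Hurwitz choice of $\Lambda(s)$, while the remaining modes are governed by $A+BK$ on $\textup{im}(M)$. I expect the only care needed is the bookkeeping that the restriction and quotient spectra together exhaust $\sigma(\bar A)$, which is routine once the block-triangular structure induced by the invariant subspace $\ker M$ is in place.
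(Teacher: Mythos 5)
Your proof is correct, and it reaches the conclusion by a genuinely different route from the paper's. Both arguments use the same candidate feedback $u=KM\zeta$ with $A+BK$ Hurwitz, but the paper establishes that $A_\zeta+B_\zeta KM$ is Hurwitz dynamically: it derives $\dot e_\zeta=(A-LC)e_\zeta$, shows $x\to 0$ and then $\zeta\to 0$ exponentially by a cascade argument (invoking Lemma 1 of \cite{cai2017} twice, with $(I_{m+p}\otimes\mathcal A)$ Hurwitz absorbing the decaying inputs $x$ and $e_\zeta$), and concludes Hurwitzness of the closed-loop matrix from exponential stability of the augmented $(\zeta,e_\zeta)$ system. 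You instead argue purely algebraically: the intertwining relations $MA_\zeta=AM$ and $MB_\zeta=B$ (which the paper only exploits later, in the proof of Theorem \ref{thm4}) give $M(A_\zeta+B_\zeta KM)=(A+BK)M$, so $\ker M$ is invariant, the restriction there equals $(I_{m+p}\otimes\mathcal A)|_{\ker M}$, and the quotient map is similar to $(A+BK)|_{\textup{im}(M)}$; the block-triangular factorization of the characteristic polynomial then finishes the job. Your route buys a sharper conclusion, namely the explicit spectral inclusion $\sigma(A_\zeta+B_\zeta KM)\subseteq\sigma(I_{m+p}\otimes\mathcal A)\cup\sigma(A+BK)$, and it avoids any appeal to cascade-stability lemmas or to the error dynamics of $e_\zeta$; the paper's route is more elementary in that it needs no quotient-space bookkeeping and reuses the $e_\zeta$ machinery that the surrounding development requires anyway. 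Your observation that the $\ker M$-invariance argument is exactly what handles the case where $M$ fails to have full row rank is on point — that is the same obstruction the paper's trajectory argument is designed to sidestep.
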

     \begin{proof}
     	With \eqref{lisys}, \eqref{dyz} and Lemma \ref{lemrelation}, the dynamics of $e_\zeta\triangleq M\zeta(t)-x$ is derived as follows
     	\begin{align}
     	\begin{split}
     			\dot{e}_\zeta=&M\dot{\zeta}-\dot{x}\\
     			=& M((I_{m+p}\otimes \mathcal{A})
     			\zeta+\begin{bmatrix}
     				I_m\otimes b\\ {\bf0}
     			\end{bmatrix}u+\begin{bmatrix}
     				{\bf0}\\I_p\otimes b
     			\end{bmatrix}y)\\
     			&-Ax-Bu\\
     			=&(A-LC)M\zeta+Bu+Ly-Ax-Bu\\
     			=&(A-LC)e_\zeta.
     	\end{split}
     	\end{align}
     	Since $A-LC$ is Hurwitz, $e_\zeta$ is exponentially stable.
     	
     	On the other hand, since the pair $(A,B)$ is stabilizable, there exists a control gain $K$ such that $A+BK$ is Hurwitz.  Applying $u=KM\zeta$ to \eqref{lisys} gives
     	\begin{align*}
     		\dot{x}=&Ax+Bu\\
     		=& Ax+BKM\zeta\\
     		=& Ax+BK(x+e_\zeta)\\
     		=& (A+BK) x+BKe_\zeta.
     	\end{align*}
     Since $e_\zeta$ tends to 0 exponentially,  by Lemma 1 of \cite{cai2017}, $x$ converges to 0 exponentially since $A+BK$ is Hurwitz.
     	
     	
     	Now, consider the following linear system
     \begin{align}
     	\begin{split}\label{auglisysez}
     		\dot{\zeta}=&A_\zeta\zeta+B_\zeta u+d_\zeta\\
     		\dot{e}_\zeta=&(A-LC)e_\zeta.
     	\end{split}
     \end{align}
     Since $d_\zeta=-\begin{bmatrix}
     	{\bf0}\\I_p\otimes b
     \end{bmatrix}Ce_\zeta$, applying $u=K_\zeta\zeta$ with $K_\zeta=KM$ to \eqref{auglisysez} gives
     \begin{align}\label{auglisys}
     	\begin{bmatrix}
     		\dot{\zeta}\\\dot{e}_\zeta
     	\end{bmatrix}=\begin{bmatrix}
     		A_\zeta+B_\zeta K_\zeta&  -\begin{bmatrix}
     			{\bf0}\\I_p\otimes b
     		\end{bmatrix}C\\ {\bf 0} & A-LC
     	\end{bmatrix}\begin{bmatrix}
     		\zeta\\e_\zeta
     	\end{bmatrix}.
     \end{align}
     We now further show that $\zeta$ tends to 0 exponentially. In fact,  applying $u=KM\zeta = K (x+e_\zeta)$ to \eqref{dyz2bf} gives
     	\begin{align}\label{dyz2bf2}
     		\begin{split}
     				\dot{\zeta}
     			=&(I_{m+p}\otimes \mathcal{A})\zeta+\begin{bmatrix}
     			{\bf0}\\I_p\otimes b
     		\end{bmatrix}CM\zeta \\
     		&+B_\zeta K M\zeta  - \begin{bmatrix}
     			{\bf0} \\ I_p\otimes b
     		\end{bmatrix} C e_\zeta  \\
     			=&(I_{m+p}\otimes \mathcal{A})\zeta+ \left  (\begin{bmatrix}
     			{\bf0}\\I_p\otimes b
     		\end{bmatrix}C +B_\zeta K \right ) x \\
     		&+  B_\zeta K   e_\zeta  	.
     	\end{split}
     	\end{align}
     	Since both $x$ and $e_\zeta$ converge to 0 exponentially and $(I_{m+p} \otimes \mathcal{A})$  is Hurwitz, the solution $\zeta$ of \eqref{dyz2bf2} goes to 0 exponentially by  Lemma 1 of \cite{cai2017} again.

     	Since both $\zeta$ and $e_\zeta$ converge to 0 exponentially, the linear system \eqref{auglisys} is exponentially stable. Thus,
     	the matrix $	A_\zeta+B_\zeta K_\zeta$ must be  Hurwitz, implying that the pair $(A_\zeta, B_\zeta)$ is stabilizable.
     \end{proof}

     Since $d_\zeta$ tends to 0 exponentially, the effect of $d_\zeta$ on \eqref{dyz2bf} can be ignored after some finite time $t_0>0$. As a result, \eqref{dyz2bf} can be simplified to the following form:
     \begin{align}\label{dyz2}
     	\dot{\zeta}=A_\zeta\zeta+B_\zeta u, t\geq t_0.
     \end{align}

     Next, we further establish the following detectability result.

     \begin{lem}\label{lem7}
     	Under Assumption \ref{ass2}, let  $\Lambda(s)=\textup{det}(sI-A+LC)$ be stable. Then, for any $Q_y>0$, the pair $(A_\zeta, \sqrt{{Q_y}}CM)$ is detectable.
     \end{lem}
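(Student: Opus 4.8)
The plan is to verify detectability through the Popov--Belevitch--Hautus (PBH) rank test. Since $Q_y>0$, the square root $\sqrt{Q_y}$ is nonsingular, so $\ker(\sqrt{Q_y}CM)=\ker(CM)$ and it suffices to show that $(A_\zeta, CM)$ is detectable; that is, for every $\lambda\in\mathbb{C}$ with $\mathrm{Re}(\lambda)\ge 0$, the only vector $v$ satisfying $A_\zeta v=\lambda v$ and $CMv=0$ is $v=0$.

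The crucial observation is the structural form of $A_\zeta$ given after \eqref{dyz2bf}, namely $A_\zeta=(I_{m+p}\otimes\mathcal{A})+\begin{bmatrix}\mathbf{0}\\ I_p\otimes b\end{bmatrix}CM$. I would fix $\lambda$ with $\mathrm{Re}(\lambda)\ge 0$ and suppose $v$ is an eigenvector with $A_\zeta v=\lambda v$ lying in $\ker(CM)$. Because $CMv=0$, the output-injection term $\begin{bmatrix}\mathbf{0}\\ I_p\otimes b\end{bmatrix}CMv$ annihilates $v$, so that $A_\zeta v=(I_{m+p}\otimes\mathcal{A})v$, and hence $(I_{m+p}\otimes\mathcal{A})v=\lambda v$. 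In other words, any unobservable eigenvector of $A_\zeta$ is forced to be an eigenvector of the companion block $I_{m+p}\otimes\mathcal{A}$ with the same eigenvalue.

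It then remains to exploit stability. Since $\Lambda(s)=\det(sI-A+LC)$ is stable by hypothesis and $\mathcal{A}$ is the companion matrix of $\Lambda(s)$, the spectrum $\sigma(\mathcal{A})$ consists of the stable roots of $\Lambda$; consequently $\sigma(I_{m+p}\otimes\mathcal{A})=\sigma(\mathcal{A})\subset\mathbb{C}_-$, so $I_{m+p}\otimes\mathcal{A}$ is Hurwitz. As $\mathrm{Re}(\lambda)\ge 0$, $\lambda$ cannot be an eigenvalue of $I_{m+p}\otimes\mathcal{A}$, forcing $v=0$. This verifies the PBH condition at every $\lambda$ in the closed right-half plane and establishes detectability. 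The role of Assumption \ref{ass2} here is indirect but essential: observability of $(A,C)$ is exactly what guarantees that an $L$ making $\Lambda(s)$ stable (equivalently $A-LC$ Hurwitz) exists in the first place, which is the standing hypothesis of the lemma.

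I expect the only real subtlety to be the clean bookkeeping of the block/Kronecker structure: one must confirm that $\begin{bmatrix}\mathbf{0}\\ I_p\otimes b\end{bmatrix}CMv$ truly vanishes on $\ker(CM)$, which is immediate once written out, and that the eigenvalues of $I_{m+p}\otimes\mathcal{A}$ are precisely those of $\mathcal{A}$. Notably, no appeal to the full-row-rank property of $M$ from Proposition \ref{prop1} is needed, which is consistent with the paper's stated aim of removing that requirement.
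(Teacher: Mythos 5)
Your proof is correct and rests on the same key identity as the paper's, namely that subtracting the output-injection term $\bigl[\begin{smallmatrix}\mathbf{0}\\ I_p\otimes b\end{smallmatrix}\bigr]CM$ from $A_\zeta$ leaves the Hurwitz matrix $I_{m+p}\otimes\mathcal{A}$; the paper simply packages this constructively by exhibiting the gain $L_\zeta=\bigl[\begin{smallmatrix}\mathbf{0}\\ I_p\otimes b\end{smallmatrix}\bigr]\sqrt{Q_y}^{-1}$ so that $A_\zeta-L_\zeta\sqrt{Q_y}CM=I_{m+p}\otimes\mathcal{A}$, whereas you reach the same conclusion through the PBH test. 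The two arguments are equivalent in substance, and your observations about the invertibility of $\sqrt{Q_y}$ and the indirect role of Assumption \ref{ass2} match the paper.
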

     \begin{proof}
     Since $Q_y>0$, $\sqrt{{Q_y}}$ is an invertible matrix. Let $L_\zeta=\begin{bmatrix}
     		{\bf0}\\I_p\otimes b
     	\end{bmatrix}\sqrt{{Q_y}}^{-1}$. Then, we have
     	\begin{align*}
     		A_\zeta-L_\zeta\sqrt{{Q_y}}CM=&
     			I_{m+p} \otimes \mathcal{A}.
     	\end{align*}
     	
     	Since $\mathcal{A}$ is Hurwitz, $A_\zeta-L_\zeta\sqrt{{Q_y}}CM$ is Hurwitz. In conclusion, the pair $(A_\zeta, \sqrt{{Q_y}}CM)$ is detectable.
     \end{proof}

       Now, consider the problem  of finding a control law $u=u_\zeta={K}_\zeta \zeta$ to minimize the  cost $\int_{t_0}^{\infty}(\zeta^T{Q_\zeta}\zeta+u^T{R}u)d\tau$,
     where ${Q_\zeta}=M^TQM=M^TC^TQ_yCM\geq0$, $ Q_y$ and ${R}$ are the same as those of \eqref{areli}.
     Since $(A_\zeta,B_\zeta)$ is stabilizable and $(A_\zeta, \sqrt{{Q_y}}CM)$ is detectable, then, an optimal control gain for this LQR problem of \eqref{dyz2} is given by $K_\zeta^*=-{R}^{-1}B_\zeta^T{P}_\zeta^*$ where ${P}_\zeta^*$ is the unique positive semi-definite solution to the following algebraic Riccati equation \eqref{areliz}:
     \begin{align}\label{areliz}
     		A_\zeta^T{P}_\zeta^*+{P}_\zeta^*A_\zeta+Q_\zeta-{P}_\zeta^*B_\zeta{R}^{-1}B_\zeta^T{P}_\zeta^*=0.
     	\end{align}
    In fact, the relation between ${P}_\zeta^*$ and $P^*$, the unique positive definite solution to \eqref{areli}, is given as follows:

     \begin{thm}\label{thm4}
     	Under Assumptions \ref{ass1} and \ref{ass2}, let $L$ be such that $\Lambda(s)=\textup{det}(sI-A+LC)$ is Hurwitz. Then, if $Q_y>0$, the algebraic Riccati equation \eqref{areliz}
     	admits a unique positive semi-definite solution ${P}_\zeta^*=M^TP^*M\geq 0$, where $P^*$ is the unique positive definite solution to \eqref{areli}.
     \end{thm}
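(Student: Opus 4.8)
The plan is to verify directly that the candidate $P_\zeta = M^T P^* M$ satisfies the ancillary Riccati equation \eqref{areliz}, and then to invoke the uniqueness of the positive semi-definite solution guaranteed by Lemmas \ref{lem6} and \ref{lem7} to conclude that it must coincide with $P_\zeta^*$. The whole argument reduces to two intertwining identities between the original data $(A,B)$ and the ancillary data $(A_\zeta,B_\zeta)$, plus one substitution.

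First I would derive $M A_\zeta = A M$ and $M B_\zeta = B$ from Lemma \ref{lemrelation}. Indeed, since $A_\zeta = (I_{m+p}\otimes \mathcal{A}) + \begin{bmatrix} {\bf0}\\ I_p\otimes b\end{bmatrix} CM$, relations \eqref{rela1} and \eqref{rela3} give $M A_\zeta = (A-LC)M + LCM = AM$, while \eqref{rela2} gives $M B_\zeta = B$ immediately. These are the only structural facts about $M$, $A_\zeta$, $B_\zeta$ that the proof needs.

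With these two identities in hand, I would substitute $P_\zeta = M^T P^* M$ into the left-hand side of \eqref{areliz} and repeatedly use $M A_\zeta = AM$, $M B_\zeta = B$, and $Q_\zeta = M^T Q M$. The term $A_\zeta^T M^T P^* M$ becomes $M^T A^T P^* M$, the term $M^T P^* M A_\zeta$ becomes $M^T P^* A M$, and the quadratic term $M^T P^* M B_\zeta R^{-1} B_\zeta^T M^T P^* M$ collapses to $M^T P^* B R^{-1} B^T P^* M$; collecting everything yields $M^T\big(A^T P^* + P^* A + Q - P^* B R^{-1} B^T P^*\big) M$, which vanishes because $P^*$ solves \eqref{areli}. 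Hence $M^T P^* M$ is a solution of \eqref{areliz}, and it is clearly positive semi-definite since $P^* > 0$.

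Finally, Lemmas \ref{lem6} and \ref{lem7} establish that $(A_\zeta, B_\zeta)$ is stabilizable and $(A_\zeta, \sqrt{Q_y}\,CM)$ is detectable, so standard LQR theory guarantees that \eqref{areliz} has a unique positive semi-definite solution; therefore $P_\zeta^* = M^T P^* M$, and the associated optimal gain is $K_\zeta^* = -R^{-1}B_\zeta^T P_\zeta^*$. I expect the only delicate points to be the bookkeeping in the substitution step — making sure each term collapses cleanly into the sandwiched form $M^T(\cdot)M$ — and citing the correct uniqueness result, namely that stabilizability together with detectability (rather than controllability together with observability) already forces uniqueness of the positive semi-definite solution, which is exactly why one can only assert $P_\zeta^* \geq 0$ here even though $P^* > 0$.
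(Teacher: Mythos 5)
Your proposal is correct and follows essentially the same route as the paper's own proof: verify that $M^TP^*M$ satisfies \eqref{areliz} via the identities $MA_\zeta=AM$ (from \eqref{rela1} and \eqref{rela3}) and $MB_\zeta=B$ (from \eqref{rela2}), then invoke Lemmas \ref{lem6} and \ref{lem7} together with the stabilizability--detectability uniqueness result of \cite{kucera1972} to identify it with $P_\zeta^*$. The only step you gloss over, which the paper states explicitly, is that the existence of the positive definite $P^*$ itself rests on $(A,\sqrt{Q_y}C)$ being observable, which follows from Assumption \ref{ass2} and $Q_y>0$.
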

     \begin{proof}
     	By Lemma \ref{lem6} and \ref{lem7}, the pair $(A_\zeta, B_\zeta)$ is stabilizable and the pair $(A_\zeta, \sqrt{{Q_y}}CM)$ is detectable. Thus, based on \cite{kucera1972}, \eqref{areliz} admits a unique positive semi-definite solution ${P}_\zeta^*$.
     	
     	On the other hand, since $(A,C)$ is observable and $Q_y>0$, $(A,\sqrt{{Q_y}}C)$ is observable. Thus, together with Assumption \ref{ass1}, the algebraic Riccati equation \eqref{areli}  admits a unique positive definite solution ${P}^*$.
     	
     	Now, we verify that the positive semidefinite matrix $M^TP^*M$ is the solution to \eqref{areliz}.
     	
     	From \eqref{rela1} and \eqref{rela3}, we have
     	\begin{align*}
     		MA_\zeta=&M (I_{m+p} \otimes \mathcal{A}+\begin{bmatrix}
     			{\bf0}\\I_p\otimes b
     		\end{bmatrix}CM)\\
     		=&(A-LC)M+LCM\\
     		=& AM.
     	\end{align*}
     	Thus, we obtain
     	\begin{align}\label{PA}
     		\begin{split}
     			(M^TP^*M)A_\zeta=&M^TP^*AM\\
     			A_\zeta^T(M^TP^*M)=&M^TA^TP^*M.
     		\end{split}
     	\end{align}
     	
     	Moreover, from \eqref{rela2}, we have $M^TP^*MB_\zeta=M^TP^*B$. Thus,
     	\begin{align}\label{PB}
     		\begin{split}
     			(M^TP^*M)B_\zeta{R}^{-1}B_\zeta^T(M^TP^*M)=M^TP^*BR^{-1}B^TP^*M.
     		\end{split}
     	\end{align}
     	Combining  \eqref{PA} and \eqref{PB} gives
     	\begin{align*}
     		&A_\zeta^T(M^TP^*M)+(M^TP^*M)A_\zeta+Q_\zeta \\&-(M^TP^*M)B_\zeta{R}^{-1}B_\zeta^T(M^TP^*M)\\
     		=&M^T(A^T{P}^*+{P}^*A+{Q}-{P}^*B{R}^{-1}B^T{P}^*)M\\
     		=&0.
     	\end{align*}
     	Thus, $M^TP^*M$ is the solution to \eqref{areliz}.
     	
     	In conclusion, \eqref{areliz} admits a unique positive semi-definite solution ${P}_\zeta^*=M^TP^*M\geq 0$.
     \end{proof}

     From Theorem \ref{thm4}, we have $K_\zeta^*=-{R}^{-1}B_\zeta^T{P}_\zeta^*=-{R}^{-1}B^TP^*M=K^*M$.
     Now, consider the optimal controller $u^*_\zeta=K_\zeta^*\zeta$ for \eqref{dyz2}. Since  $M\zeta$ converges to $x$ exponentially,  $u^*_\zeta=K_\zeta^*\zeta=K^*M\zeta$ converges to the optimal controller  $u^*=K^*x$ for \eqref{lisys} exponentially. Thus, if we can solve the algebraic Riccati equation \eqref{areliz}, the LQR problem of \eqref{lisys} can then be solved with the output-based optimal controller $u^*_\zeta$.

     By now, we have converted the output-feedback LQR problem of \eqref{lisys} into the state-feedback LQR problem of the system \eqref{dyz2} with a known input matrix.   Nevertheless, since the existing state-based PI and VI methods  only apply to the case where  $(A,B)$ is stabilizable and $(A,\sqrt{{Q}})$ is  observable, they cannot be directly apply to  \eqref{areliz} where  the pair $(A_\zeta, B_\zeta)$ is stabilizable but the pair $(A_\zeta, \sqrt{{Q_y}}CM)$ is only detectable. For this reason, we will show in Appendix \ref{app1} and Appendix \ref{app2}, respectively,  that both the state-based PI and VI methods apply to the case where  $(A,B)$ is stabilizable and  $(A,\sqrt{{Q}})$ is detectable.

    Moreover, since $B_\zeta$ is known, we can further obtain an improved PI algorithm and an improved VI algorithm for the output-based data-driven LQR problem for \eqref{lisys} as follows.

     \subsection{An Improved PI Method}\label{sec3-1}
     Given \eqref{dyz2}, applying the Kleinman's algorithm \eqref{aleq1sin}  \cite{Kleinman}
     gives the following equations
     \begin{subequations} \label{aleq1z}
     	\begin{align}
     			0=&(A_\zeta^k)^T\bar{P}^P_{k} +\bar{P}^P_{k}A_\zeta^k+{Q}_\zeta+(\bar{K}^P_{k})^T{R}\bar{K}^P_{k}
     		\\ \label{aleq2z}
     		\bar{K}^P_{k+1}=&-{R}^{-1} B_\zeta^T\bar{P}^P_{k},
     \end{align}
     \end{subequations}
     	where $A_\zeta^k=A_\zeta+B_\zeta\bar{K}^P_{k}$ and
      $\bar{K}^P_{0}$ is such that $A_\zeta^0$ is Hurwitz.
     We have the following result.

     \begin{thm}\label{thm5}
     	Suppose  $(A_\zeta, B_\zeta)$ is stabilizable and $(A_\zeta, \sqrt{{Q_y}}CM)$ is detectable. Let ${P}_\zeta^*$ be the unique positive semi-definite solution of \eqref{areliz} and ${K}_\zeta^* = -{R}^{-1} B_\zeta^T {P}_\zeta^*$.
     Then  the Kleinman's algorithm \eqref{aleq1z} guarantees $\sigma(A_\zeta^{k}) \subset \mathbb{C}_-$, $\lim\limits_{k\to \infty} \bar{P}^P_{k}={P}_\zeta^*$ and $\lim\limits_{k\to \infty} \bar{K}^P_{k}={K}_\zeta^*$.

     \end{thm}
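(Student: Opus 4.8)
The plan is to recognize the iteration \eqref{aleq1z}--\eqref{aleq2z} as Kleinman's algorithm for the LQR problem of the ancillary system \eqref{dyz2}, and to replay the classical monotone-convergence proof, modifying only the single step where observability is usually invoked. First I would record that $Q_\zeta=M^TC^TQ_yCM=(\sqrt{Q_y}CM)^T(\sqrt{Q_y}CM)\ge 0$, so detectability of $(A_\zeta,\sqrt{Q_y}CM)$ is precisely detectability relative to this factorization of $Q_\zeta$; together with stabilizability of $(A_\zeta,B_\zeta)$ this guarantees, via \cite{kucera1972}, that $P_\zeta^*$ is the unique positive semi-definite solution of \eqref{areliz}. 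The base case is immediate: $A_\zeta^0$ is Hurwitz by hypothesis, and whenever $A_\zeta^k$ is Hurwitz the right-hand side $Q_\zeta+(\bar K_k^P)^TR\bar K_k^P$ of \eqref{aleq1z} is positive semi-definite, so that Lyapunov equation has a unique solution $\bar P_k^P=(\bar P_k^P)^T\ge 0$.

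The crux, and the point at which the detectability-only setting departs from the textbook argument, is the inductive claim $\sigma(A_\zeta^{k+1})\subset\mathbb{C}_-$. In the observable case one concludes $\bar P_k^P>0$ and exploits positive definiteness, but here $\bar P_k^P$ may be singular. Instead I would complete the square: writing $A_\zeta^k=A_\zeta^{k+1}+B_\zeta(\bar K_k^P-\bar K_{k+1}^P)$ in \eqref{aleq1z} and eliminating $B_\zeta^T\bar P_k^P$ through \eqref{aleq2z} yields
\[
\begin{aligned}
&(A_\zeta^{k+1})^T\bar P_k^P+\bar P_k^P A_\zeta^{k+1}+Q_\zeta+(\bar K_{k+1}^P)^TR\bar K_{k+1}^P\\
&\quad+(\bar K_k^P-\bar K_{k+1}^P)^TR(\bar K_k^P-\bar K_{k+1}^P)=0.
\end{aligned}
\]
If $A_\zeta^{k+1}v=\lambda v$ with $\mathrm{Re}(\lambda)\ge 0$, then left- and right-multiplying by $v^*$ and $v$ makes the $\bar P_k^P$ term equal to $2\,\mathrm{Re}(\lambda)\,v^*\bar P_k^P v\ge 0$, while the remaining three terms are non-negative and sum to the negative of it; hence each vanishes. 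From $R>0$ and the factorization of $Q_\zeta$ this forces $\sqrt{Q_y}CM\,v=0$ and $\bar K_{k+1}^Pv=0$, and the latter gives $A_\zeta v=A_\zeta^{k+1}v=\lambda v$. Thus $v$ would be an unstable, unobservable mode of $(A_\zeta,\sqrt{Q_y}CM)$, contradicting detectability; therefore $A_\zeta^{k+1}$ is Hurwitz and the induction closes.

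Once Hurwitzness holds at every step, the remainder is routine Lyapunov comparison. Subtracting the equation for $\bar P_{k+1}^P$ from the completed-square identity shows that $\bar P_k^P-\bar P_{k+1}^P$ solves a Lyapunov equation driven by $A_\zeta^{k+1}$ (Hurwitz) with a positive semi-definite forcing term, giving the monotone bound $P_\zeta^*\le\bar P_{k+1}^P\le\bar P_k^P$, where the lower bound follows from an analogous comparison against \eqref{areliz}. A monotone non-increasing sequence bounded below converges, so $\bar P_k^P\to\bar P\ge 0$; passing to the limit in \eqref{aleq1z}--\eqref{aleq2z} shows $\bar P$ satisfies \eqref{areliz}, and uniqueness of its positive semi-definite solution forces $\bar P=P_\zeta^*$, whence $\bar K_k^P\to K_\zeta^*$ by \eqref{aleq2z}. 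Because this is exactly the generalized Kleinman iteration, I expect to carry out the detailed monotonicity and limit estimates in Appendix \ref{app1} and simply invoke that result here; the only genuinely new ingredient, and the main obstacle, is the detectability-based Hurwitz step described above.
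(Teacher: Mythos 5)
Your proposal is correct, and it reaches the same conclusion by a genuinely different route from the paper. The paper disposes of Theorem~\ref{thm5} in one line by invoking Theorem~\ref{thmconpi} of Appendix~\ref{app1}, whose proof is built on the cost-matrix machinery: the integral identities \eqref{v1-v21}--\eqref{v1-v22} of Lemma~\ref{lemerrV} and, crucially, Lemma~\ref{lemonlyif}, which shows that finiteness of $V_K(0,\infty)$ forces $A+BK$ to be Hurwitz under detectability. There, the closed-loop stability of $K^P_{k+1}$ is deduced from the bound $0\le V_{k+1}\le V_k$, i.e.\ from finiteness of a value function. You instead prove the Hurwitz-preservation step directly on the completed-square Lyapunov identity via a PBH eigenvector argument: an eigenvalue $\lambda$ of $A_\zeta^{k+1}$ with $\mathrm{Re}(\lambda)\ge 0$ would force $\sqrt{Q_y}CM\,v=0$ and $\bar K^P_{k+1}v=0$, exhibiting an undetectable unstable mode of $(A_\zeta,\sqrt{Q_y}CM)$. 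Both arguments ultimately rest on the same detectability hypothesis (indeed the paper's proof of Lemma~\ref{lemonlyif} contains essentially your PBH step, applied to $(A+BK,\col(\sqrt{Q},\sqrt{R}K))$), but yours is more elementary and purely matrix-algebraic: it needs only that the Lyapunov solution with PSD forcing is PSD, and it sidesteps the mildly delicate point of manipulating $V_{K_1}(0,\infty)$ before $A+BK_1$ is known to be Hurwitz. What the paper's route buys in exchange is that the identities \eqref{v1-v21} and \eqref{v1-v22} deliver the monotonicity $P^*\le P^P_{k+1}\le P^P_k$ in a single stroke, whereas you derive the upper and lower comparisons by two separate Lyapunov subtractions; your versions of those subtractions are nevertheless correct, as is your limit-passing argument using uniqueness of the positive semi-definite solution of \eqref{areliz}. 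I find no gap.
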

     \begin{proof}
     This theorem is a direct application of Theorem \ref{thmconpi} in Appendix \ref{app1}.
     \end{proof}

     We now derive PI algorithm for \eqref{dyz2}.  Rewrite \eqref{dyz2} as follows:
     \begin{align}\label{dyzpi}
     	\dot{\zeta}
     	=& A_\zeta\zeta+B_\zeta u\notag \\
     	=&A_\zeta^k \zeta-B_\zeta(\bar{K}^P_{k}\zeta-u) , t\geq t_0.
     \end{align}

     Since
     $B_\zeta$ is  a known matrix and
       $\zeta^TQ_\zeta\zeta=\zeta^TM^TC^TQ_yCM\zeta$ will exponentially converge to $y^TQ_yy$, from \eqref{dyzpi} and \eqref{aleq1z}, by replacing $\zeta^TQ_\zeta\zeta$ with $y^TQ_yy$, we obtain
     \begin{align}\label{piintz}
     &	|\zeta(t+\delta t)|_{\bar{P}_k^P}-|\zeta(t)|_{\bar{P}_k^P} \notag\\
     &	=\int_{t}^{t+\delta t}-|y|_{Q_y}-\zeta^T(\bar{K}_k^P)^TR\bar{K}_k^P\zeta\notag \\
     & -2\zeta^T(\bar{K}^P_k)^T B_\zeta^T \bar{P}_k^P  \zeta+ 2u^T B_\zeta^T \bar{P}_k^P \zeta d\tau .
     \end{align}

     Noting that
      there exists a constant matrix $N_{n_\zeta}\in \mathbb{R}^{{n_\zeta}^2\times \frac{{n_\zeta}({n_\zeta}+1)}{2}}$ such that $N_{n_\zeta}\text{vecs}(\bar{P}^P_k)=\text{vec}(\bar{P}^P_k)$,
    \eqref{piintz} implies
     \begin{align}\label{pilinearznew}
    	\tilde{\Psi}_k^P\text{vecs}(\bar{P}^P_k)=\tilde{\Phi}_k^P,
    \end{align}
       where $\tilde{\Psi}_k^P=[\delta_\zeta +2\Gamma_{\zeta\zeta}(I_{n_\zeta}\otimes ((\bar{K}^P_k)^T B_\zeta^T ))N_{n_\zeta} -2\Gamma_{\zeta u}(I_{n_\zeta}\otimes {B_\zeta^T})N_{n_\zeta}]$, $\tilde{\Phi}_k^P=-\Gamma_{yy}\text{vec}(Q_y)-\Gamma_{\zeta\zeta}\text{vec}((\bar{K}^P_{k})^T{R}\bar{K}^P_{k})$ and $n_\zeta=(m+p)n$.
    The solvability of \eqref{pilinearznew} is guaranteed by the following lemma.
    \begin{lem}
    	The matrix $\tilde{\Psi}_k^P$ has full column rank for {\color{black}$k\in\mathbb{N}$}, if
    	\begin{align}\label{rankconpiz}
    		\textup{rank}(\Gamma_{\zeta\zeta})=& \frac{n_\zeta(n_\zeta+1)}{2},
    	\end{align}
    	where $n_\zeta=(m+p)n$.
    \end{lem}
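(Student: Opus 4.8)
The plan is to prove that the only symmetric matrix $X$ with $\tilde{\Psi}_k^P\,\text{vecs}(X)=0$ is $X=0$; since $\text{vecs}$ is a bijection between $\mathcal{S}^{n_\zeta}$ and $\mathbb{R}^{\frac{n_\zeta(n_\zeta+1)}{2}}$, this is precisely the full column rank of $\tilde{\Psi}_k^P$. The strategy is to reinterpret $\tilde{\Psi}_k^P\,\text{vecs}(X)$, for an arbitrary symmetric $X$, as the data matrix $\Gamma_{\zeta\zeta}$ applied to a Lyapunov-type expression in $X$, and then invoke the Hurwitz property of $A_\zeta^k$ guaranteed by Theorem \ref{thm5}.

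First I would substitute the dynamics \eqref{dyz2}, $\dot{\zeta}=A_\zeta\zeta+B_\zeta u$, into the identity $\frac{d}{d\tau}(\zeta^TX\zeta)=\zeta^T(A_\zeta^TX+XA_\zeta)\zeta+2u^TB_\zeta^TX\zeta$ and integrate over each sampling interval $[t_{i-1},t_i]$. Tracking the three blocks of $\tilde{\Psi}_k^P$ through the vectorization identities $(\zeta\otimes\zeta)^T\text{vec}(Y)=\zeta^TY\zeta$, $(\zeta\otimes u)^T\text{vec}(B_\zeta^TX)=u^TB_\zeta^TX\zeta$, and $N_{n_\zeta}\text{vecs}(X)=\text{vec}(X)$, the $i$-th entry of $\tilde{\Psi}_k^P\,\text{vecs}(X)$ becomes $|\zeta(t_i)|_X-|\zeta(t_{i-1})|_X+2\int_{t_{i-1}}^{t_i}\zeta^T(\bar{K}_k^P)^TB_\zeta^TX\zeta\,d\tau-2\int_{t_{i-1}}^{t_i}u^TB_\zeta^TX\zeta\,d\tau$, exactly as in the construction of \eqref{piintz} but with a generic $X$ in place of $\bar{P}_k^P$. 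The crucial point is that the $u$-dependent term produced by the fundamental theorem of calculus cancels the $\Gamma_{\zeta u}$ term, and after symmetrizing $2\zeta^T(\bar{K}_k^P)^TB_\zeta^TX\zeta$ the integrand collapses to $\zeta^T\big((A_\zeta^k)^TX+XA_\zeta^k\big)\zeta$, where $A_\zeta^k=A_\zeta+B_\zeta\bar{K}_k^P$. Hence $\tilde{\Psi}_k^P\,\text{vecs}(X)=\Gamma_{\zeta\zeta}\,\text{vec}(\Theta)$ with $\Theta=(A_\zeta^k)^TX+XA_\zeta^k$.

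Next, observe that $\Theta$ is symmetric, so $\zeta^T\Theta\zeta=\text{vecv}(\zeta)^T\text{vecs}(\Theta)$ gives $\Gamma_{\zeta\zeta}\,\text{vec}(\Theta)=I_{\zeta\zeta}\,\text{vecs}(\Theta)$. Because $\zeta\otimes\zeta$ and $\text{vecv}(\zeta)$ are related by a fixed full-column-rank duplication matrix, one has $\text{rank}(\Gamma_{\zeta\zeta})=\text{rank}(I_{\zeta\zeta})$, so the hypothesis \eqref{rankconpiz} is equivalent to $I_{\zeta\zeta}$ having full column rank. Therefore $\tilde{\Psi}_k^P\,\text{vecs}(X)=0$ forces $I_{\zeta\zeta}\,\text{vecs}(\Theta)=0$, whence $\text{vecs}(\Theta)=0$ and $\Theta=0$. (Equivalently, under \eqref{rankconpiz} the null space of $\Gamma_{\zeta\zeta}$ is exactly the skew-symmetric subspace, and the symmetric matrix $\Theta$ lying in it must vanish.)

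Finally, $\Theta=0$ is the homogeneous Lyapunov equation $(A_\zeta^k)^TX+XA_\zeta^k=0$. By Theorem \ref{thm5} the Kleinman iterates generated by \eqref{aleq1z} satisfy $\sigma(A_\zeta^k)\subset\mathbb{C}_-$, so $A_\zeta^k$ is Hurwitz and the Lyapunov operator $X\mapsto(A_\zeta^k)^TX+XA_\zeta^k$ is nonsingular; thus $X=0$, establishing the claim. I expect the main obstacle to be the bookkeeping of the second paragraph, namely correctly propagating the three blocks of $\tilde{\Psi}_k^P$ through the Kronecker and $\text{vec}/\text{vecv}/\text{vecs}$ identities and verifying the cancellation of the input term, since everything afterward (the rank equivalence and the Lyapunov uniqueness) is standard.
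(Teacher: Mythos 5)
Your proof is correct, and it is essentially the argument the paper intends: the paper omits the proof by referring to Lemma 6 of \cite{jiang2012computational}, whose standard strategy is exactly yours --- identify $\tilde{\Psi}_k^P\,\text{vecs}(X)$ with $\Gamma_{\zeta\zeta}$ applied to the Lyapunov expression $(A_\zeta^k)^TX+XA_\zeta^k$ after the input term cancels, use the rank condition \eqref{rankconpiz} to force that expression to vanish, and conclude $X=0$ from the Hurwitz property of $A_\zeta^k$. Your adaptation to the case where only $\text{vecs}(\bar{P}^P_k)$ is unknown (because $B_\zeta$ is known), including the observation that $\text{rank}(\Gamma_{\zeta\zeta})=\text{rank}(I_{\zeta\zeta})$, is accurate.
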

    \begin{proof}
    	The proof of this lemma is the same  as that of Lemma 6 of \cite{jiang2012computational} and is thus omitted.
    \end{proof}

     Thus, by iteratively solving \eqref{pilinearznew} and \eqref{aleq2z}, we can obtain the approximations   to ${P}_\zeta^*$  and $K_\zeta^*$.
     The improved PI algorithm is shown as Algorithm \ref{algpi}.

     \begin{rem}
     	{\color{black}   Compared with \eqref{pilinearout}, our approach leads to \eqref{pilinearznew} whose  number of the unknown variables is  reduced by $mn_\zeta$. As a result, the rank condition \eqref{rankconpiz} is also milder than \eqref{rankconpiout} since the column dimension of the matrix for the rank test has been reduced by $mn_\zeta$. Therefore, the improved PI  algorithm has significantly relaxed the rank condition and reduced the computational cost. This improvement is made possible by constructing the dynamics \eqref{dyz2} of $\zeta$.}
     \end{rem}

    \begin{algorithm}
    	\caption{The Improved output-based PI Algorithm}\label{algpi}
    	\begin{algorithmic}[1]
    		\State {\color{black}Find a feedback gain $\bar{K}^P_{0}$ such that $A_\zeta^0\triangleq A_\zeta+B_\zeta\bar{K}^P_{0}$ is Hurwitz.} Apply an initial input $u^0=\bar{K}^P_{0}\zeta+\delta$ with exploration noise $\delta$. Choose a proper $t_0>0$ such that $||e_\zeta||$ is small enough.
    		\State {\color{black}Collect} data from $t_0$ until the rank condition \eqref{rankconpiz} is satisfied. $k\leftarrow 0$.
    		\Repeat
    		\State Solve $\bar{P}^P_k$ from (\ref{pilinearznew}). $\bar{K}^P_{k+1}=-{R}^{-1} B_\zeta^T\bar{P}^P_{k}$. $k\leftarrow k+1$.
    		\Until{$||\bar{P}^P_k-\bar{P}^P_{k-1}||<\varepsilon$ with a sufficiently small constant $\varepsilon>0$}.
    		\State ${P}_\zeta^*\leftarrow \bar{P}^P_k$. $K_\zeta^*\leftarrow-{R}^{-1}B_\zeta^T{P}_\zeta^*$.
    		\State Obtain the following optimal controller
    		\begin{align}\label{controllerpi}
    			u^*=K_\zeta^*\zeta.
    		\end{align}
    	\end{algorithmic}
    \end{algorithm}

     For a simple comparison, consider a case with $n=5,m=5,p=1,n_\zeta=(m+p)n=30$. {\color{black}TABLE \ref{tab1}  compares the computational complexity and rank condition of
      the original output-based PI algorithm  presented in Section \ref{sec2-3}, and the improved output-based PI algorithm. }

\begin{table}[!ht]\color{black}
	\caption{Comparison of PI Algorithms}\label{tab1}
	\centering
	\begin{tabular}{|c|c|c|c|}
		\hline
		\multicolumn{1}{|c|}{\multirow{2}{*}{PI Algorithm}} &
		\multicolumn{1}{c|}{\multirow{2}{*}{Equation}} &
		Unknown &
		\multicolumn{1}{c|}{\multirow{2}{*}{Rank Condition}} \\
		& & Variables &  \\
		\hline
		Original Version & \eqref{pilinearout} & 615 & $\textup{rank}([\Gamma_{\zeta\zeta}, \Gamma_{\zeta u}])=615$ \\
		\hline
		Improved Version & \eqref{pilinearznew} & 465 & $\textup{rank}(\Gamma_{\zeta\zeta})=465$ \\
		\hline
	\end{tabular}
\end{table}

%
%
%
%
%

     \subsection{An Improved VI Method}\label{sec3-2}
	
In this subsection, we further improve the output-based  VI Algorithm \ref{vialgout} by reducing the computational cost and  relaxing the solvability condition in the case where $(A_\zeta, B_\zeta)$ is stabilizable and $(A_\zeta, \sqrt{{Q_y}}CM)$ is detectable. For this purpose, consider Algorithm \ref{vialgzmodel} where  $\bar{P}^V_k$ is an approximation to ${P}_\zeta^*$ in the $k$-th iteration, $\epsilon_k$ and $\varepsilon$ are as defined in Section \ref{sec2-2}, and $\{\bar{B}_q\}_{q=0}^{\infty}$ is a collection of bounded subsets in $\mathcal{P}^{n_\zeta}$ satisfying
	\begin{align}
		\bar{B}_q\subset \bar{B}_{q+1}, \; q\in \mathbb{N}, \; \lim\limits_{q\to \infty}\bar{B}_q=\mathcal{P}^{n_\zeta}.
	\end{align}

	\begin{algorithm}[H]
		\caption{{\color{black}Model-based} VI  Algorithm  for solving \eqref{areliz} }\label{vialgzmodel}
		\begin{algorithmic}[1]
			\State Choose $\bar{P}^V_0=(\bar{P}^V_0)^T\geq0$. $k,q\leftarrow 0$.
			\Loop
			\State $\tilde{\bar{P}}^V_{k+1}\leftarrow \bar{P}^V_k+\epsilon_k (A_\zeta^T\bar{P}^V_k+\bar{P}^V_kA_\zeta+{Q}_\zeta-\bar{P}^V_kB_\zeta{R}^{-1}B_\zeta^T\bar{P}^V_k)$.
			\If{$\tilde{\bar{P}}^V_{k+1}\notin \bar{B}_q$}
			\State $\bar{P}^V_{k+1}\leftarrow \bar{P}^V_0$. $q\leftarrow q+1$.
			\ElsIf{$||\tilde{\bar{P}}^V_{k+1}-\bar{P}^V_k||/\epsilon_k<\varepsilon$}
			\State \Return $\bar{P}^V_k$ as an approximation to ${P}_\zeta^*$.
			\Else
			\State $\bar{P}^V_{k+1}\leftarrow \tilde{\bar{P}}^V_{k+1}$.
			\EndIf
			\State $k \leftarrow k+1$.
			\EndLoop
		\end{algorithmic}
	\end{algorithm}
	
	We have the following result.
	
	 \begin{thm}\label{thm6}
	If $(A_\zeta, B_\zeta)$ is stabilizable and $(A_\zeta, \sqrt{{Q_y}}CM)$ is detectable, then  Algorithm \ref{vialgzmodel} is such that $\lim\limits_{k\to \infty}\bar{P}^V_k={P}_\zeta^*$.
	\end{thm}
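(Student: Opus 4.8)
The plan is to recognize that Algorithm \ref{vialgzmodel} is exactly the model-based VI iteration of \cite{bian2016} (Algorithm \ref{vialg1}) applied to the ancillary system \eqref{dyz2} with data $(A_\zeta, B_\zeta, Q_\zeta, R)$, and then to invoke a convergence result that holds under stabilizability and detectability rather than the stronger stabilizability--observability hypothesis used in \cite{bian2016}. By Lemma \ref{lem6} the pair $(A_\zeta, B_\zeta)$ is stabilizable and by Lemma \ref{lem7} the pair $(A_\zeta, \sqrt{Q_y}CM)$ is detectable; since $Q_\zeta = (\sqrt{Q_y}CM)^T(\sqrt{Q_y}CM)$, a PBH argument shows that $(A_\zeta,\sqrt{Q_\zeta})$ has exactly the same unobservable modes as $(A_\zeta,\sqrt{Q_y}CM)$, so $(A_\zeta,\sqrt{Q_\zeta})$ is detectable as well. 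Hence the hypotheses of the generalized VI convergence theorem developed in Appendix \ref{app2} are met, and the conclusion $\lim_{k\to\infty}\bar{P}^V_k = P_\zeta^*$ follows directly. The bulk of the work is therefore to establish that generalized VI theorem.

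For the generalized theorem itself, I would follow the structure of the proof in \cite{bian2016}, which splits into two essentially independent parts. The first part is an ODE-tracking (stochastic-approximation) argument: the update $\tilde{\bar{P}}^V_{k+1} \leftarrow \bar{P}^V_k + \epsilon_k \mathcal{R}(\bar{P}^V_k)$, with $\mathcal{R}(P) \triangleq A_\zeta^T P + P A_\zeta + Q_\zeta - P B_\zeta R^{-1} B_\zeta^T P$, is a forward-Euler discretization of the matrix Riccati differential equation $\dot P = \mathcal{R}(P)$. The step-size conditions $\sum_k \epsilon_k = \infty$ and $\sum_k \epsilon_k^2 < \infty$, together with the nested bounded sets $\bar{B}_q$ (which force only finitely many resets once the flow is confined to a bounded region), guarantee that the iterates asymptotically follow the continuous flow and inherit its limit. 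This part of the argument relies only on boundedness and continuity of $\mathcal{R}$ and is insensitive to the observability-versus-detectability distinction, so it carries over verbatim.

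The second part --- and the genuine obstacle --- is to prove that $P_\zeta^*$ is a globally attracting equilibrium of $\dot P = \mathcal{R}(P)$ on the cone $\mathcal{P}^{n_\zeta}$ under detectability alone. When $(A_\zeta,\sqrt{Q_\zeta})$ is observable one may argue through strict definiteness, but with only detectability the limit $P_\zeta^*$ is merely positive semi-definite, so that route is unavailable. I would instead exploit the stabilizing property underlying Theorem \ref{thm4}: the optimal closed-loop matrix $A_\zeta - B_\zeta R^{-1} B_\zeta^T P_\zeta^* = A_\zeta + B_\zeta K_\zeta^*$ is Hurwitz. Writing $E = P - P_\zeta^*$ and substituting into the flow yields a Riccati-type differential equation for $E$ whose linear part is governed by this Hurwitz matrix; combining monotonicity of the Riccati flow (which preserves the positive semi-definite ordering of initial conditions) with the decay of the linearization, one shows $E \to 0$ from any $P(0) \geq 0$. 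The delicate point is to rule out the flow stalling at a non-stabilizing positive semi-definite solution of \eqref{areliz}; here detectability is exactly what is needed, since it forces the positive semi-definite solution to be the unique stabilizing one, so no spurious equilibrium can trap the flow. Once global convergence of the ODE is in hand, the ODE-tracking argument of the first part delivers $\bar{P}^V_k \to P_\zeta^*$ and the theorem follows.
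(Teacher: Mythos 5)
Your overall strategy matches the paper's: Theorem \ref{thm6} is indeed proved there in one line as a direct application of a generalized VI convergence theorem (Theorem \ref{thmconvi} in Appendix \ref{app2}) for the stabilizable/detectable case, and that theorem is in turn proved by combining convergence of the differential Riccati flow with the stochastic-approximation (ODE-tracking) machinery of \cite{bian2016}. Your reduction to the ancillary system and the observation that $(A_\zeta,\sqrt{Q_\zeta})$ inherits detectability from $(A_\zeta,\sqrt{Q_y}CM)$ are fine.

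However, there is a genuine gap in the part you dismiss as carrying over ``verbatim.'' The stochastic-approximation argument does not merely need $P_\zeta^*$ to be a globally attracting equilibrium of $\dot P=\mathcal{R}(P)$ on the cone $\mathcal{P}^{n_\zeta}$; the boundedness lemma (the analogue of Lemma 3.4 of \cite{bian2016}, Lemma \ref{lembounded} in the paper) and the final appeal to Kushner--Yin require $P_\zeta^*$ to be \emph{locally asymptotically stable} and require the iterates to eventually lie in a compact set with nonempty interior contained in the region of attraction --- i.e., one needs every $P(0)\geq 0$ to lie in the \emph{interior} of the domain of attraction, as a subset of $\mathcal{S}^{n_\zeta}$. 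Under observability this is automatic because $P^*>0$ sits in the interior of the cone; under mere detectability $P_\zeta^*$ is only positive semi-definite and sits on the boundary of $\mathcal{P}^{n_\zeta}$, so convergence of the flow from PSD initial data (which you argue via monotonicity, and which the paper simply quotes from Kucera's Theorem 17) says nothing about what happens to nearby symmetric, possibly indefinite, initial conditions. This is exactly the difficulty the paper flags in the remark following Lemma \ref{lemdifric}, and closing it is the main technical content of Appendix \ref{app2}: the Kailath--Ljung characterization of the domain of attraction in terms of the dual Riccati solution $P^a$ (Lemma \ref{lemdifricex}), the norm inequality of Lemma \ref{lemnormineq}, and the resulting uniform radius $\delta=\delta(P^a)$ in Lemma \ref{lemasy} showing $\mathcal{P}^{n_\zeta}$ lies in the interior of the region of attraction. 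Your proposal contains no substitute for this step, so as written the ODE-tracking half of your argument cannot be completed. Your sketch of global convergence of the flow via $E=P-P_\zeta^*$ and monotonicity is a plausible (if uncheckably terse) alternative to citing Kucera, but it addresses the easier of the two issues.
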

	\begin{proof}
		 This theorem is a direct application of Theorem \ref{thmconvi} in Appendix \ref{app2}.
	\end{proof}
	
	\begin{rem}
	In contrast to 	VI Algorithm \ref{vialg1}	proposed in \cite{bian2016}, $\bar{P}^V_0$ in Algorithm \ref{vialgzmodel} can be any positive semi-definite matrix.
	\end{rem}
	
	 Now, let $\bar{H}^V_k=A_\zeta^T\bar{P}^V_k+\bar{P}^V_kA_\zeta+{Q}_\zeta$,  $ \bar{K}^V_k=-{R}^{-1}B_\zeta^T\bar{P}^V_k$.
	Then, from \eqref{dyz2}, using the relation $\zeta^TQ_\zeta\zeta=y^TQ_yy$, we obtain
	\begin{align}\label{intviz}
		|\zeta(t+\delta t)|_{\bar{P}^V_k}-&|\zeta(t)|_{\bar{P}^V_k} +\int_{t}^{t+\delta t} |y|_{Q_y} d\tau\notag \\=&\int_{t}^{t+\delta t}\lbrack |\zeta|_{\bar{H}^V_k} +2{u}^T B_\zeta^T\bar{P}^V_k \zeta \rbrack d\tau.
	\end{align}
	 Noting ${R} \bar{K}^V_{k}=- B_\zeta^T\bar{P}^V_{k}$ is known,
	\eqref{intviz}  implies
	\begin{align}\label{vilinearznew}
		\tilde{\Psi}^V\text{vecs}(\bar{H}^V_k)=\tilde{\Phi}^V_k,
	\end{align}
	where $\tilde{\Psi}^V=I_{\zeta\zeta}$ and $\tilde{\Phi}^V_k=\delta_{\zeta}\text{vecs}(\bar{P}^V_k)+I_{yy}\text{vecs}(Q_y)+2I_{\zeta u}	\text{vec}(\bar{K}^V_k)$.

	\begin{rem}\label{rem13}
		The definition of $\bar{K}^V_k$ in this section is different from that of Section \ref{sec2}. In Section \ref{sec2}, $\bar{K}^V_k$ is defined as $\bar{K}^V_k\triangleq -{R}^{-1}B^T{P}^V_kM$. In this section, $\bar{K}^V_k\triangleq -{R}^{-1}B_\zeta^T\bar{P}^V_k$. Since $B_\zeta$ and $\bar{P}^V_k$ are known matrices in each iteration, $\bar{K}^V_k\triangleq -{R}^{-1}B_\zeta^T\bar{P}^V_k$ in this section is a known matrix.
	\end{rem}

The solvability of \eqref{vilinearznew} is clearly guaranteed by the following condition.
\begin{lem}
	The matrix $\tilde{\Psi}^V$ has full column rank if
	\begin{align}\label{rankconviz}
		\textup{rank}(I_{\zeta\zeta})=& \frac{n_\zeta(n_\zeta+1)}{2},
	\end{align}
	where $n_\zeta=(m+p)n$.
\end{lem}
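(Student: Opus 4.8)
The plan is to observe that this lemma is, at bottom, a restatement of the definition of full column rank specialized to the matrix $\tilde{\Psi}^V$. First I would recall from the paragraph immediately preceding the lemma that $\tilde{\Psi}^V = I_{\zeta\zeta}$, so the two matrices are literally identical; there is nothing to relate between them beyond a careful bookkeeping of dimensions. This is why the text introduces the condition with the word ``clearly.''

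Next I would pin down the column dimension of $I_{\zeta\zeta}$. By the definition of the data-stack operator $I_{aa}$ in TABLE \ref{tab:symbols}, each row of $I_{\zeta\zeta}$ has the form $\int_{t_{j-1}}^{t_j}\textup{vecv}(\zeta)\,d\tau$, and since $\zeta\in\mathbb{R}^{n_\zeta}$ the vector $\textup{vecv}(\zeta)$ lies in $\mathbb{R}^{\frac{n_\zeta(n_\zeta+1)}{2}}$ by the definition of $\textup{vecv}$ in the Notation section. Hence $I_{\zeta\zeta}$ is a matrix with $\frac{n_\zeta(n_\zeta+1)}{2}$ columns (and $s$ rows). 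This count is also forced by the identity \eqref{vilinearznew}, in which $\tilde{\Psi}^V$ is multiplied by $\textup{vecs}(\bar{H}^V_k)\in\mathbb{R}^{\frac{n_\zeta(n_\zeta+1)}{2}}$, so the two factors are conformable precisely when $\tilde{\Psi}^V$ has that many columns.

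Finally I would invoke the elementary fact that a matrix has full column rank if and only if its rank equals its number of columns. Since $\tilde{\Psi}^V = I_{\zeta\zeta}$ has exactly $\frac{n_\zeta(n_\zeta+1)}{2}$ columns, the hypothesis \eqref{rankconviz}, namely $\textup{rank}(I_{\zeta\zeta}) = \frac{n_\zeta(n_\zeta+1)}{2}$, asserts nothing more than that the rank attains the column count, which is the definition of full column rank. This closes the argument, and the solvability of \eqref{vilinearznew} for the unique unknown $\textup{vecs}(\bar{H}^V_k)$ follows at once.

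I do not anticipate any genuine obstacle here. Unlike the earlier solvability lemmas for $\Psi_k^P$, $\Psi^V$, $\bar{\Psi}_k^P$ and $\bar{\Psi}^V$, whose coefficient matrices were assembled from several concatenated data blocks so that establishing full column rank required a nontrivial argument of the kind referenced via Lemma 6 of \cite{jiang2012computational}, the reduced parameterization of this section has a single unknown block $\textup{vecs}(\bar{H}^V_k)$ multiplied directly by the data matrix $I_{\zeta\zeta}$. Consequently the rank condition degenerates into a tautology, and the only point requiring attention is the correct dimensional count, which I carried out in the second step.
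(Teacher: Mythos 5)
Your proposal is correct and matches the paper, which offers no proof at all and simply prefaces the lemma with ``clearly'' precisely because $\tilde{\Psi}^V=I_{\zeta\zeta}$ makes the hypothesis literally the definition of full column rank. Your dimensional bookkeeping of $\textup{vecv}(\zeta)\in\mathbb{R}^{\frac{n_\zeta(n_\zeta+1)}{2}}$ is the only substantive check needed, and it is done correctly.
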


{\color{black}By iteratively solving \eqref{vilinearznew}, we can obtain $\bar{H}^V_k$. Thus, the updating law of $\tilde{\bar{P}}^V_{k+1}$ in Algorithm \ref{vialgzmodel} can be replaced by $\tilde{\bar{P}}^V_{k+1}\leftarrow \bar{P}^V_k+\epsilon_k (\bar{H}^V_k-\bar{P}^V_kB_\zeta{R}^{-1}B_\zeta^T\bar{P}^V_k)$,  which relies on known $\bar{H}^V_k$ and known matrix $B_{\zeta}$.
The above improved VI algorithm is summarized as Algorithm \ref{vialgz}.}

\begin{rem}
	Similarly to the PI algorithm, by obtaining \eqref{vilinearznew}, we have reduced the number of the unknown variables of \eqref{vilinearout} by $mn_\zeta$. As a result, the rank condition \eqref{rankconviz} for our improved VI algorithm is milder than the original rank condition \eqref{rankconviout} since the column dimension of the matrix for the rank test has been reduced by $mn_\zeta$. Thus, our improvement not only significantly saves the computational cost, but also remarkably relaxes the rank requirement.
\end{rem}

\begin{algorithm}
	\caption{ The improved output-based VI Algorithm }\label{vialgz}
	\begin{algorithmic}[1]
		\State  Apply any locally essentially bounded initial input $u^0$. Choose a proper $t_0>0$ such that $||e_\zeta||$ is small enough.
		{\color{black}Collect} data  from $t_0$ until the rank condition \eqref{rankconviz} is satisfied.
		\State Choose $\bar{P}^V_0=(\bar{P}^V_0)^T\geq0$. $k,q\leftarrow 0$.
		\Loop
		\State Solve $\bar{H}^V_k$ from \eqref{vilinearznew}.
		\State $\tilde{\bar{P}}^V_{k+1}\leftarrow \bar{P}^V_k+\epsilon_k (\bar{H}^V_k-\bar{P}^V_kB_\zeta{R}^{-1}B_\zeta^T\bar{P}^V_k)$.
		\If{$\tilde{\bar{P}}^V_{k+1}\notin\bar{B}_q$}
		\State $\bar{P}^V_{k+1}\leftarrow \bar{P}^V_0$. $q\leftarrow q+1$.
		\ElsIf{$||\tilde{\bar{P}}^V_{k+1}-\bar{P}^V_k||/\epsilon_k<\varepsilon$}
		\State \Return $\bar{P}^V_k$ and $-{R}^{-1}B_\zeta^T\bar{P}^V_k$ as  approximations to ${P}_\zeta^*$  and $K_\zeta^*$, respectively.
		\Else
		\State $\bar{P}^V_{k+1}\leftarrow \tilde{\bar{P}}^V_{k+1}$.
		\EndIf
		\State $k \leftarrow k+1$.
		\EndLoop
		\State Obtain the following optimal controller
		\begin{align}\label{controllervi}
			u^*=K_\zeta^*\zeta.
		\end{align}
	\end{algorithmic}
\end{algorithm}

To see the effect of our improvement, consider the simple case with $n=5,m=5,p=1,n_\zeta=(m+p)n=30$ again. {\color{black}TABLE \ref{tab11} compares the computational complexity and rank condition between the original output-based VI algorithm presented in Section \ref{sec2-3}, and the improved output-based VI algorithm.} It can be seen that the unknown variables and the column dimension of matrix for rank test have both decreased by around $24.4\% $.

\begin{table}[!ht]\color{black}
	\caption{Comparison of VI Algorithms}\label{tab11}
	\centering
	\begin{tabular}{|c|c|c|c|}
		\hline
		\multicolumn{1}{|c|}{\multirow{2}{*}{VI Algorithm}} &
		\multicolumn{1}{c|}{\multirow{2}{*}{Equation}} &
		Unknown &
		\multicolumn{1}{c|}{\multirow{2}{*}{Rank Condition}} \\
		& & Variables &  \\
		\hline
		Original Version & \eqref{vilinearout} & 615 & $
		\textup{rank}([I_{\zeta\zeta}, I_{\zeta u}])=615$ \\
		\hline
		Improved Version & \eqref{vilinearznew} & 465 &  $\textup{rank}(I_{\zeta\zeta})=465$ \\
		\hline
	\end{tabular}
\end{table}

   \section{Application}\label{sec5}
   In this section, we use two examples to illustrate the effectiveness of our proposed output-based data-driven algorithms.

   \subsection{Example 1}
   Consider the example given in \cite{rizvi2019} which comes from the load frequency {\color{black}control} of power systems \cite{vrabie2009adaptive} as follows:
   \begin{align}\label{ex1}
   	\begin{split}
   		A&=\begin{bmatrix}
   			-0.0665 &8 &0 &0\\
   			0 &-3.663 &3.663 &0\\
   			-6.86& 0 &-13.736& -13.736\\
   			0.6 &0 &0& 0
   		\end{bmatrix},
   	\\ B&=\begin{bmatrix}
   	0\\0\\13.736\\0
   	\end{bmatrix},C=\begin{bmatrix}
   	1&0&0&0
   	\end{bmatrix}.
   \end{split}
   \end{align}
   It can be  checked that Assumption \ref{assctrl}-\ref{ass2} are all satisfied for this example. Moreover, the eigenvalues of $A$ are $-14.8527, -1.4798, -0.5665 \pm 3.2661i$. Thus, $A$  is Hurwitz and $K={\bf 0}$ is such that $A+BK$ is stable.  In this way, an initial stabilizing gain is obtained as $\bar{K}^P_0=KM={\bf 0}$, and the improved output-based PI Algorithm \ref{algpi} can be applied to this example.

   Let $Q_y=1,R=1$, $\Lambda(s)=(s+5)(s+6)(s+7)(s+8)$. It can be calculated that
   \begin{align*}
   	P^*=&\begin{bmatrix}
   		0.3135   & 0.2864  &  0.0509 &   0.1912\\
   		0.2864    &0.4156  &  0.0903 &   0.0789\\
   		0.0509  &  0.0903  &  0.0210 &   0\\
   		0.1912  &  0.0789  &  0  &  1.1868
   	\end{bmatrix}\\
   	K^*=&\begin{bmatrix}
   		-0.6994 &  -1.2404 &  -0.2890  & 0
   	\end{bmatrix}\\
   	L=&\begin{bmatrix}
   		8.5345 & 6.3795   & -9.1734 &-3.5737
   	\end{bmatrix}^T\\
   	M=&[M_u , M_y],
   	 \end{align*}
   	 where
    \begin{align*}	M_u=&10^{3}\times\begin{bmatrix}
   		0   & 0.4025   &      0    &     0\\
   		0   & 0.4328  &  0.0503      &   0\\
   		0  &  1.1338  &  0.1685   & 0.0137\\
   		1.6800    &     0      &   0      &   0\\
   	\end{bmatrix}\\
   	M_y=&10^{3}\times\begin{bmatrix}
   		1.4385 &  0.8616 &  0.1995&   0.0085\\
   		-0.2457  &  -0.0311  & 0.0545 &  0.0064\\
   		-0.6663  &  -0.4541 &   -0.0437  &  -0.0092\\
   		-0.2134  &  -0.0642  &  -0.0573   & -0.0036
   	\end{bmatrix}.
   	   \end{align*}

   Choose the initial conditions as $x(0)=[1,1,1,1]^T,\zeta(0)={\bf 0}$. Applying an exploring initial input $u^0=\bar{K}^P_0\zeta+\delta$ with $\bar{K}^P_0={\bf 0}$ and $\delta=20[\sin(t)+\sin(7t)+\sin(10t)+\sin(16t)]$ to the system at $t_0=3~secs$ for data collecting. Let {\color{black}$t_q=3+0.1q, q=0,1,\cdots,s$} with $s=45$. Then the data collected in $[3,7.5]secs$ is  such that \eqref{rankconpiz} is satisfied. Start the policy iteration process of Algorithm \ref{algpi} with $\bar{K}^P_0$. Then it takes 8 steps for the convergence criterion $||\bar{P}^P_k-\bar{P}^P_{k-1}||<\varepsilon$ with $\varepsilon=0.01$ to be satisfied. {\color{black}Fig. \ref{iterafigpi1} compares $\bar{P}^P_k,\bar{K}^P_k$ with their target values $P_\zeta^*,K_\zeta^*$, respectively.  The y-axis in Fig. \ref{iterafigpi1} stands for normalized errors $\frac{||\bar{P}^P_k-P_\zeta^*||}{||P_\zeta^*||}, \frac{||\bar{K}^P_k-K_\zeta^*||}{||K_\zeta^*||}$ with unit being ``1''}.   By calculation, we obtain {\color{black}the normalized error for estimating $K^*_\zeta$ as follows:}
   \begin{align*}
   	\frac{||\bar{K}^P_8-K_\zeta^*||}{||K_\zeta^*||}=0.0002   .
   \end{align*}
   Thus, our proposed improved output-based PI Algorithm \ref{algpi} works satisfactorily.
   {\color{black}Furthermore, we repeat the simulation 1000 times on MATLAB running on a MacBook Pro with the processor being Apple M2 Pro. The average computing time needed for convergence is  $0.0106$ s.}

   It is noted that the rank condition \eqref{rankconpiout} is not satisfied for the data collected from  $[3,7.5]secs$. Thus, the output-based PI algorithm of \cite{rizvi2019} does not apply to  this scenario.

   \begin{figure}
   	\centering
   	\includegraphics[width=8.5cm]{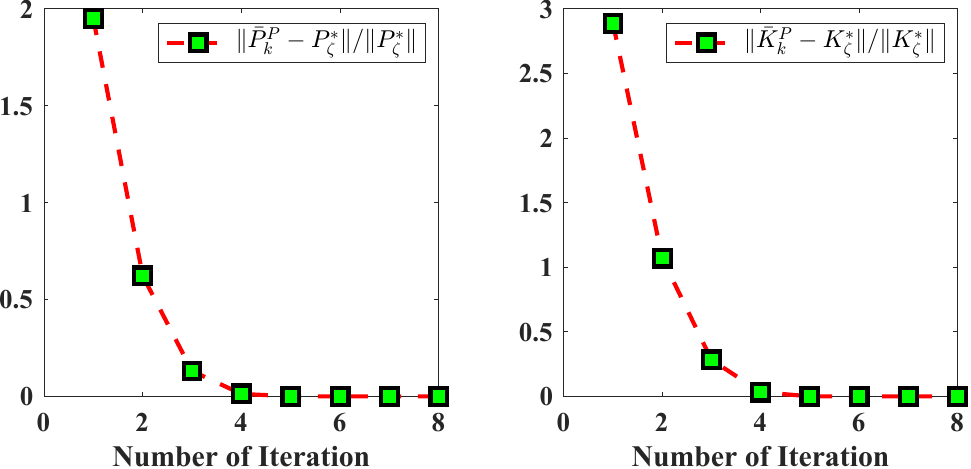}
   	\caption{Comparison of $\bar{P}^P_{k},\bar{K}^P_{k}$ and their target values $P_\zeta^*,K_\zeta^*$ for Algorithm \ref{algpi}. {\color{black}The normalized errors $\frac{||\bar{P}^P_k-P_\zeta^*||}{||P_\zeta^*||}, \frac{||\bar{K}^P_k-K_\zeta^*||}{||K_\zeta^*||}$ reach the desired accuracy within 8 steps}.}
   	\label{iterafigpi1}
   \end{figure}

   \subsection{Example 2}
Consider a linear system in the form of \eqref{lisys} with
\begin{align}\label{ex2}
	\begin{split}
		A=&\begin{bmatrix}
			-11 & 30\\-4& 11
		\end{bmatrix},B=\begin{bmatrix}
		10\\4
		\end{bmatrix},\\
		C=&\begin{bmatrix}
			1 &0
		\end{bmatrix}.
	\end{split}
\end{align}
It can be verified  that Assumptions \ref{ass1} and \ref{ass2} are both satisfied for this system, but Assumption \ref{assctrl} is not satisfied. Thus, the output-based algorithms in \cite{rizvi2019} does not apply to this example. Nevertheless, our improved output-based algorithms still apply to this example.

The eigenvalues of $A$ are $\pm 1$. Thus, this is an unstable system, and we will apply the improved output-based VI Algorithm \ref{vialgz} to this example.

Let $Q_y=1,R=1$, $\Lambda(s)=(s+6)(s+7)$. Simple calculations give
\begin{align*}
	P^*=&\begin{bmatrix}
		0.5905   &-1.5000\\
		-1.5000  &  4.5000
	\end{bmatrix}\\
	K^*=&\begin{bmatrix}
		0.0950 &  -3.0000
	\end{bmatrix}\\
	L=&\begin{bmatrix}
		13.0000&
		6.2000
	\end{bmatrix}^T\\
	M_u=&\begin{bmatrix}
		10.0000 &  10.0000\\
		-6.0000  &  4.0000
	\end{bmatrix}\\
	M_y=&\begin{bmatrix}
		43.0000 & 13.0000\\
		16.2000 &  6.2000
	\end{bmatrix}\\
	M=&[M_u , M_y].
\end{align*}

The initial conditions are set as $x(0)=[1,1]^T,\zeta(0)={\bf 0}$.  An exploring initial input $u^0={K}_\zeta^0\zeta+\delta$ with ${K}_\zeta^0=[34.0000 ,  -6.0000 , -21.8000  ,-11.8000]$
and $\delta=20\sin(3t)$ is injected to the system at $t_0=4~secs$ for data collecting. Let {\color{black}$t_q=4+0.05q, q=0,1,\cdots,s$} with $s=15$. Then  the data collected in $[4,4.75]secs$ is such that  \eqref{rankconviz} is satisfied.
Let $\bar{P}^V_0$ be a positive semi-definite matrix as follows:
\begin{align*}
	\bar{P}^V_0=\begin{bmatrix}
		1&0&0&0\\0&1&0&0\\0&0&1&0\\0&0&0&0
	\end{bmatrix}\geq 0.
\end{align*}
Define  $\bar{B}_q$  as $\bar{B}_q=\{H\geq0 \:|\;|H|<1000(q+1)\}$ for $H=H^T\in \mathbb{R}^{4\times4}, q\in \mathbb{N}$ and $\epsilon_k=\frac{5}{k}$. Then, under the convergence criterion $||\tilde{\bar{P}}^V_{k+1}-\bar{P}^V_k||/\epsilon_k<\varepsilon$ with $\varepsilon=0.01$, it takes 1860 steps of iteration for Algorithm \ref{vialgz} to converge.  {\color{black} Fig. \ref{iterafigvi1} compares  $\bar{P}^V_k,\bar{K}^V_k$ with their target values $P_\zeta^*,K_\zeta^*$, respectively.  The y-axis in Fig. \ref{iterafigvi1}  stands for normalized errors $\frac{||\bar{P}^V_k-P_\zeta^*||}{||P_\zeta^*||}$ and $ \frac{||\bar{K}^V_k-K_\zeta^*||}{||K_\zeta^*||}$ with unit being ``1''}.

By calculation, we obtain {\color{black}the normalized error for estimating $K^*_\zeta$ as follows:}
\begin{align*}
	\frac{||\bar{K}^V_{1860}-K_\zeta^*||}{||K_\zeta^*||}=1.1266\times 10^{-4}.
\end{align*}
Thus, our proposed improved output-based VI Algorithm \ref{vialgz} works satisfactorily.
{\color{black}We also repeat the simulation 1000 times on MATLAB running on a MacBook Pro with the processor being Apple M2 Pro. The average computing time needed for convergence of Algorithm \ref{vialgz} is  $0.0345$ s.}

It is also noticed that the rank condition \eqref{rankconviout} is not satisfied for the data collected from  $[4,4.75]secs$. Thus, the output-based VI algorithm of \cite{rizvi2019} does not  apply to  this scenario.

\begin{figure}
	\centering
	\includegraphics[width=8.5cm]{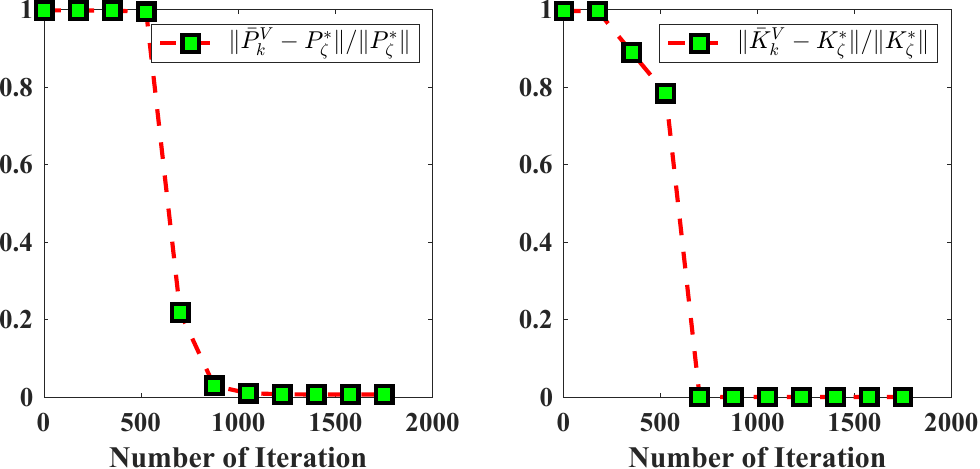}
	\caption{Comparision of $\bar{P}^V_{k}, \bar{K}^V_{k}$ and their target values $P_\zeta^*,K_\zeta^*$ for Algorithm \ref{vialgz}. {\color{black}The normalized errors $\frac{||\bar{P}^V_k-P_\zeta^*||}{||P_\zeta^*||}$ and $ \frac{||\bar{K}^V_k-K_\zeta^*||}{||K_\zeta^*||}$ reach the desired accuracy in 1860 steps.}}
	\label{iterafigvi1}
\end{figure}

\begin{table}[htbp]\color{black}
	\caption{Simulation Results of Our Improved Output-Based ADP Algorithms}
	\centering
	\begin{tabular}{c | c | c}
		\hline\hline
		~ & \emph{Improved PI Algorithm} & \emph{Improved VI Algorithm } \\
		\hline
		 Applied Scenario&Example 1 & Example 2
		\\ \hline
		\makecell[c]{Iteration Steps\\for Convergence}& 8& 1860
		\\ \hline
		 \makecell[c]{Normalized Error \\for Estimating $K^*_\zeta$}& $2\times 10^{-4}$&$1.1266\times 10^{-4}$
		\\ \hline
		\makecell[c]{Experiment \\Repeated Times}&1000 & 1000\\ \hline
		\makecell[c]{Average Computing \\ Time for Iterations}&0.0106 s& 0.0345 s\\
		\hline\hline
	\end{tabular}
	\label{tab:comtime}
\end{table}

	\section{Conclusions}\label{sec6}
	
	In this paper, we have further investigated the problem of designing the output-based ADP PI and VI algorithms for continuous-time linear systems based on the results of \cite{rizvi2019} and \cite{rizvi2023}. Compared with the existing results, this paper provides the following four new features.  First, we have removed the full row rank requirement on the parameterization matrix $M$, thus removing  the main hurdle for
	applying this method.
	Second,
	we have derived  a new sequence of linear equations for implementing both output-based PI and VI algorithms whose
	number of unknown variables is significantly less than that of the existing output-based PI and VI algorithms, thus  improving the computational efficiency of the existing output-based PI and VI algorithms.
	Moreover, the solvability condition of this sequence of equations is also less stringent than the one in the literature.
	Third,  since the existing state-based PI or VI algorithm only applies to the case where the Riccati equation admits a unique positive definite solution, we have shown that both the state-based PI algorithm   and the VI algorithm in the literature can be generalized to the case where the Riccati equation only admits a unique positive semi-definite solution.
	Fourth, as a result of the third feature,  we have broadened the scope of applicability of the existing techniques from controllable linear systems to stabilizable linear systems.  The future work will focus on developing output-based data-driven algorithms for linear multi-agent systems.

\appendices
\section{Extended Model-Based PI Approach}\label{app1}

In this appendix, we will extend the Kleinman's algorithm \eqref{aleq1sin} to the case where $(A,B)$ is stabilizable and $(A,\sqrt{{Q}})$ is detectable.

To begin with, we summarize the following theorem from \cite{kucera1972} regarding the solution to the algebraic Riccati equation \eqref{areli}.

\begin{thm}\label{thmpsd}
	Suppose $(A, B)$ is stabilizable and  $(A,\sqrt{{Q}})$ is detectable. Then,  there exists a unique positive semi-definite solution $P^*\geq 0$ to \eqref{areli}.
\end{thm}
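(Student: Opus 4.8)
The plan is to prove existence and uniqueness separately, since they draw on different parts of the hypothesis: stabilizability drives existence, while detectability is what forces uniqueness. For existence I would realize $P^*$ as the limit of the finite-horizon Riccati flow (equivalently, half the Hessian of the infinite-horizon LQR value function). For uniqueness I would first establish the key lemma that \emph{every} positive semi-definite solution of \eqref{areli} is stabilizing, and then show that two stabilizing solutions must coincide via a Sylvester-equation argument.

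For existence, I would fix any stabilizing gain $K$ (available by Assumption \ref{ass1}) so that $A+BK$ is Hurwitz; then for $x(0)=x_0$ the control $u=Kx$ yields a finite cost $\int_0^\infty(x^TQx+u^TRu)\,d\tau = x_0^T\bar P x_0$, where $\bar P\geq 0$ solves $(A+BK)^T\bar P+\bar P(A+BK)+Q+K^TRK=0$. Letting $P_T$ denote the value at time $0$ of the Riccati differential equation $-\dot P = A^TP+PA+Q-PBR^{-1}B^TP$ with terminal condition $P(T)=0$, one has $0\leq P_T\leq \bar P$ for all $T>0$ (the upper bound by optimality against $u=Kx$, the lower bound because $P_T$ encodes a nonnegative cost), and $P_T$ is monotonically nondecreasing in $T$ since the running cost is nonnegative. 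A monotone bounded family of symmetric matrices converges, so $P_T\to P^*\geq 0$ as $T\to\infty$; passing to the limit in the differential equation, where $\dot P\to 0$, shows $P^*$ solves \eqref{areli}.

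The lemma underlying uniqueness is that any $P\in\mathcal{P}^n$ solving \eqref{areli} renders $A_c\triangleq A-BR^{-1}B^TP$ Hurwitz. Rewriting \eqref{areli} as the Lyapunov identity $A_c^TP+PA_c = -(Q+PBR^{-1}B^TP)$, I would take any eigenpair $A_c v=\lambda v$ and compute $2\,\mathrm{Re}(\lambda)\,v^*Pv = -v^*(Q+PBR^{-1}B^TP)v$. If $\mathrm{Re}(\lambda)\geq 0$, the left side is nonnegative while the right side is nonpositive, so both vanish; hence $\sqrt{Q}\,v=0$ and $B^TPv=0$, which gives $A_c v=Av=\lambda v$ with $\sqrt{Q}\,v=0$, contradicting detectability of $(A,\sqrt{Q})$ by the PBH test. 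Therefore $\sigma(A_c)\subset\mathbb{C}_-$.

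Finally, let $P_1,P_2\in\mathcal{P}^n$ both solve \eqref{areli}, and set $A_i = A-BR^{-1}B^TP_i$, each Hurwitz by the lemma. Subtracting the two copies of \eqref{areli} and regrouping the quadratic terms with $\Delta\triangleq P_1-P_2$ gives the Sylvester equation $A_1^T\Delta+\Delta A_2 = 0$. Since $\sigma(A_1)\subset\mathbb{C}_-$ while $\sigma(-A_2)$ lies in the open right half-plane, the two spectra are disjoint, so this Sylvester equation admits only the trivial solution; thus $\Delta=0$ and $P_1=P_2$. I expect the existence half to be the main obstacle: verifying the monotonicity and uniform boundedness of the finite-horizon Riccati flow (equivalently, the quadratic structure and attainability of the LQR value function) is the analytically substantive step, whereas the stabilizing lemma and the Sylvester argument are short and purely algebraic.
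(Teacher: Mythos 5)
The paper does not actually prove this statement: Theorem \ref{thmpsd} is quoted directly from \cite{kucera1972}, so there is no in-paper proof to compare against. Your proposal supplies a correct, self-contained argument along the standard lines. The existence half (monotone, uniformly bounded finite-horizon Riccati flow, with the bound furnished by a fixed stabilizing gain from Assumption \ref{ass1}) is sound; the only step you gloss over is why the limit $P^*$ satisfies the \emph{algebraic} equation, which follows because $\Pi(T)\triangleq P_T(0)$ obeys the autonomous forward Riccati ODE, its right-hand side converges by continuity, and a convergent $\Pi$ with convergent derivative forces that limit derivative to be zero. The uniqueness half is also correct: the Lyapunov rewriting $A_c^TP+PA_c=-(Q+PBR^{-1}B^TP)$ together with the eigenpair computation handles both the case $v^*Pv>0$ and the degenerate case $v^*Pv=0$ (both sides vanish either way, yielding $\sqrt{Q}v=0$ and $B^TPv=0$, hence a PBH violation of detectability of $(A,\sqrt{Q})$), and the regrouping $P_1SP_1-P_2SP_2=P_1S\Delta+\Delta SP_2$ with $S=BR^{-1}B^T$ gives exactly the Sylvester equation $A_1^T\Delta+\Delta A_2=0$ with disjoint spectra. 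Your proof is a legitimate replacement for the citation; it is in fact more informative than the paper's treatment, since it also establishes as a by-product that every positive semi-definite solution is stabilizing, a fact the paper implicitly relies on elsewhere (e.g., in Theorem \ref{thmconpi} and Lemma \ref{lemonlyif}).
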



Now, for any  $ K \in  \mathbb{R}^{m \times n}$,    define a cost matrix associated  with $K$ as follows:
{\small\begin{align}\label{vkt}
	V_K(t,T)\triangleq\int_{t}^{T}e^{(A+BK)^T(\tau-t)}(Q+K^TRK)e^{(A+BK)(\tau-t)}d\tau.
\end{align}}

Then, we have the following result.
\begin{lem} \label{lemerrV} For any  $ K_1, K_2  \in  \mathbb{R}^{m \times n}$,
	the following relations hold:
	\begin{align}\label{v1-v21}
		\begin{split}
			V_1-V_2
			=&\int_{0}^{\infty} e^{(A+BK_2)^T\tau}[(K_1-K_2)^TR(K_1-K_2)\\
			&+(K_1-K_2)^T(B^TV_1+RK_2)\\
			&+(B^TV_1+RK_2)^T(K_1-K_2)] e^{(A+BK_2)\tau}d\tau
		\end{split}
	\end{align}
	
	\begin{align}\label{v1-v22}
		\begin{split}
			V_1-V_2
			=&\int_{0}^{\infty} e^{(A+BK_1)^T\tau}[(K_1-K_2)^TR(K_1-K_2)\\
			&+(K_1-K_2)^T(B^TV_2+RK_2)\\
			&+(B^TV_2+RK_2)^T(K_1-K_2)] e^{(A+BK_1)\tau}d\tau,
		\end{split}
	\end{align}
	where $V_1=V_{K_1}(0,\infty), V_2=V_{K_2}(0,\infty)$.
\end{lem}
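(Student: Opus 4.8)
The plan is to characterize each $V_i:=V_{K_i}(0,\infty)$ as the unique solution of a Lyapunov equation and then show that the difference $V_1-V_2$ solves a third Lyapunov equation whose inhomogeneous term is exactly the bracketed integrand appearing in the statement; the integral formulas then follow from the standard integral representation of the Lyapunov solution. First I would observe that the integrals defining $V_1$ and $V_2$ in \eqref{vkt} converge precisely when $A+BK_1$ and $A+BK_2$ are Hurwitz, which is the relevant case (the gains produced by the Kleinman iteration are stabilizing). Under this condition, differentiating the integrand $e^{(A+BK_i)^T\tau}(Q+K_i^TRK_i)e^{(A+BK_i)\tau}$ in $\tau$ and integrating over $[0,\infty)$ yields, for $i=1,2$,
\[
(A+BK_i)^TV_i+V_i(A+BK_i)+Q+K_i^TRK_i=0,
\]
so that each $V_i$ is the unique symmetric solution of its Lyapunov equation.

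To establish \eqref{v1-v21}, I would set $\Delta:=K_1-K_2$ and write $A+BK_1=(A+BK_2)+B\Delta$. Substituting this into the Lyapunov equation for $V_1$ and subtracting the one for $V_2$, the quadratic terms are reorganized via the identity $K_1^TRK_1-K_2^TRK_2=\Delta^TR\Delta+\Delta^TRK_2+K_2^TR\Delta$. After collecting the cross terms, using the symmetry of $R$ and of $V_1$ (so that $K_2^TR+V_1B=(RK_2+B^TV_1)^T$), one arrives at
\[
(A+BK_2)^T(V_1-V_2)+(V_1-V_2)(A+BK_2)=-\big[\Delta^TR\Delta+\Delta^T(B^TV_1+RK_2)+(B^TV_1+RK_2)^T\Delta\big].
\]
Since $A+BK_2$ is Hurwitz, this Lyapunov equation has a unique solution, which by the integral representation is precisely the right-hand side of \eqref{v1-v21}; this proves the first identity.

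The second identity \eqref{v1-v22} follows by the mirror computation. Writing instead $A+BK_2=(A+BK_1)-B\Delta$, substituting into the Lyapunov equation for $V_2$, subtracting the equation for $V_1$, and collecting terms in the same way produces the analogous Lyapunov equation for $V_1-V_2$, now with system matrix $A+BK_1$ and inhomogeneous term built from $B^TV_2+RK_2$ rather than $B^TV_1+RK_2$. Invoking uniqueness of the Lyapunov solution once more yields \eqref{v1-v22}.

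I expect the only conceptual point to be the justification of the integral representation of the Lyapunov solution together with its uniqueness, which hinges on the stabilizing property of both gains (hence the implicit requirement that $A+BK_1,A+BK_2$ be Hurwitz). The remaining effort is routine but careful bookkeeping of transposes and cross terms, where the main pitfall is correctly pairing $(B^TV_i+RK_2)^T$ with $K_2^TR+V_iB$ when grouping the off-diagonal contributions.
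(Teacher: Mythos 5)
Your proof is correct, and the core algebra --- writing $\Delta=K_1-K_2$, expanding $K_1^TRK_1-K_2^TRK_2=\Delta^TR\Delta+\Delta^TRK_2+K_2^TR\Delta$, and pairing the cross terms $V_iB\Delta+\Delta^TB^TV_i$ with these to form $\Delta^T(B^TV_i+RK_2)+(B^TV_i+RK_2)^T\Delta$ --- is exactly the computation in the paper. The packaging differs: the paper works with the finite-horizon cost $V_K(t,T)$, derives the differential Lyapunov equation satisfied by $V_{K_1}(t,T)-V_{K_2}(t,T)$, integrates it with the terminal condition $\delta V_K(T,T)=0$ to obtain the finite-horizon identity \eqref{deltaVK}, which holds for arbitrary $K_1,K_2$ and finite $T$, and only then sets $t=0$, $T=\infty$; you instead subtract the two algebraic Lyapunov equations and invoke uniqueness of the Lyapunov solution for a Hurwitz coefficient matrix. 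Your route is shorter for the infinite-horizon statement, but it needs both $A+BK_1$ and $A+BK_2$ to be Hurwitz from the outset, whereas \eqref{deltaVK} does not. This matters slightly downstream: in the proof of Theorem \ref{thmconpi} the lemma is applied with $K_2=K_1^P$ at a stage where $A+BK_1^P$ is not yet known to be Hurwitz --- the identity is used precisely to conclude that $V_1$ is finite and hence that $K_1^P$ is stabilizing. With your version one would need a small supplementary argument (e.g., monotonicity of $V_{K_2}(0,T)$ in $T$ combined with the finite-horizon identity) to avoid circularity at that step, although the paper's own passage to $T=\infty$ elides the same point. As a proof of the lemma as stated --- which is only meaningful when $V_1$ and $V_2$ are finite, i.e., by Lemma \ref{lemonlyif} when both gains are stabilizing --- your argument is complete.
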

\begin{proof}
	The proof of this lemma is motivated by Appendix C of \cite{kleinmanphd}.
	First, from \eqref{vkt}, we have
	\begin{align}
		\begin{split}
			\frac{dV_K(t,T)}{dt}=&-(Q+K^TRK)-(A+BK)^TV_K(t,T)\\
			&-V_K(t,T)(A+BK).
		\end{split}
	\end{align}
	Thus,
	\begin{align}\label{ddeltav}
		\begin{split}
			&\frac{d(V_{K_1}(t,T)-V_{K_2}(t,T))}{dt}\\
			=&(A+BK_2)^TV_{K_2}(t,T)+V_{K_2}(t,T)(A+BK_2)\\&+K_2^TRK_2
			-(A+BK_1)^TV_{K_1}(t,T)\\&-V_{K_1}(t,T)(A+BK_1)-K_1^TRK_1\\
			=&(A+BK_2)^T(V_{K_2}(t,T)-V_{K_1}(t,T))\\&+(V_{K_2}(t,T)-V_{K_1}(t,T))(A+BK_2)\\
			&-(K_1-K_2)^TB^TV_{K_1}(t,T)-V_{K_1}(t,T)B(K_1-K_2)\\
			&-(K_1-K_2)^TR(K_1-K_2)-(K_1-K_2)^TRK_2\\&-K_2^TR(K_1-K_2)\\
			=&-(A+BK_2)^T(V_{K_1}(t,T)-V_{K_2}(t,T))\\&-(V_{K_1}(t,T)-V_{K_2}(t,T))(A+BK_2)\\
			&-(K_1-K_2)^TR(K_1-K_2)\\&-(K_1-K_2)^T(B^TV_{K_1}(t,T)+RK_2)\\&-(B^TV_{K_1}(t,T)+RK_2)^T(K_1-K_2).
		\end{split}
	\end{align}
	
	Let $\delta V_K(t,T)\triangleq V_{K_1}(t,T)-V_{K_2}(t,T)$.  Then, with the boundary condition $\delta V_K(T,T)=0$, the solution to \eqref{ddeltav} is given by
	\begin{align}\label{deltaVK}
		\begin{split}
			\delta V_K(t,T)=&\int_{t}^{T}e^{(A+BK_2)^T(\tau-t)}[(K_1-K_2)^TR(K_1-K_2)\\&+(K_1-K_2)^T(B^TV_{K_1}(\tau,T)+RK_2)\\&+(B^TV_{K_1}(\tau,T)+RK_2)^T(K_1-K_2)]
			\\&e^{(A+BK_2)(\tau-t)}d\tau.
		\end{split}
	\end{align}
	
	From  \eqref{vkt},  we have  $V_K(t,\infty)=V_K(0,\infty)$ for any $t$ and $K$. Thus,
	in \eqref{deltaVK}, let $t=0,T=\infty$, we directly obtain \eqref{v1-v21}. Similarly, we can also obtain \eqref{v1-v22}.
\end{proof}
%
%

\begin{lem}\label{lemonlyif}
	Suppose $(A, B)$ is stabilizable and  $(A,\sqrt{{Q}})$ is detectable. Then, $V_K(0,\infty)$ is finite if and only if $A+BK$ is Hurwitz.
\end{lem}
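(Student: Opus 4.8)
The plan is to prove the two implications separately; the ``only if'' direction carries all the content. Throughout write $A_K = A+BK$ and factor the weight matrix as $Q+K^TRK = \tilde{C}^T\tilde{C}$, where $\tilde{C}=\begin{bmatrix}\sqrt{Q}\\ \sqrt{R}\,K\end{bmatrix}$ and $\sqrt{R}$ is the (invertible) symmetric positive definite square root of $R$; this factorization is legitimate because $Q\geq 0$ and $R>0$ give $Q+K^TRK\geq 0$. Then $V_K(0,\infty)=\int_0^\infty e^{A_K^T\tau}\tilde{C}^T\tilde{C}\,e^{A_K\tau}\,d\tau$, and the first step I would record is the quadratic-form identity, valid for every $v\in\mathbb{C}^n$,
\begin{equation*}
v^* V_K(0,\infty)\, v = \int_0^\infty \bigl\| \tilde{C}\,e^{A_K\tau} v\bigr\|^2\, d\tau ,
\end{equation*}
which holds because $A_K$ and $\tilde{C}$ are real, so the transpose in $e^{A_K^T\tau}$ combines with the conjugation in $v^*$ to produce $\bigl(e^{A_K\tau}v\bigr)^*$. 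This identity underlies both directions.

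For the ``if'' direction I would argue directly: if $A_K$ is Hurwitz, then $\|e^{A_K\tau}\|\leq \kappa e^{-\alpha\tau}$ for some $\kappa,\alpha>0$, the integrand of $V_K(0,\infty)$ is dominated by $\kappa^2\|\tilde{C}^T\tilde{C}\|\,e^{-2\alpha\tau}$, and the integral converges. This needs no structural hypothesis on the system.

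For the ``only if'' direction I would argue by contraposition on the spectrum of $A_K$. Suppose $A_K$ has an eigenvalue $\lambda$ with $\mathrm{Re}(\lambda)\geq 0$ and an eigenvector $v\neq 0$, so $e^{A_K\tau}v=e^{\lambda\tau}v$. Substituting into the identity gives $v^* V_K(0,\infty)\,v = \|\tilde{C}v\|^2\int_0^\infty e^{2\,\mathrm{Re}(\lambda)\tau}\,d\tau$, and since $\mathrm{Re}(\lambda)\geq 0$ the scalar integral diverges. Hence finiteness of $V_K(0,\infty)$ forces $\tilde{C}v=0$, i.e. $\sqrt{Q}\,v=0$ and $\sqrt{R}\,Kv=0$; invertibility of $\sqrt{R}$ then yields $Kv=0$, so $Av=(A+BK)v=\lambda v$. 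Thus $v$ would be an eigenvector of $A$ for an eigenvalue with nonnegative real part that is simultaneously annihilated by $\sqrt{Q}$, which by the PBH test contradicts the detectability of $(A,\sqrt{Q})$. Therefore every eigenvalue of $A_K$ has negative real part, i.e. $A_K$ is Hurwitz.

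The main obstacle is this reverse direction, and the delicate points are (i) justifying the passage from finiteness of the \emph{matrix} $V_K(0,\infty)$ to the scalar obstruction along a single eigenvector---in particular handling complex $\lambda$ and $v$ correctly via the identity above---and (ii) noting that one eigenvector suffices, so no assumption on the Jordan structure at purely imaginary eigenvalues is required. I would also remark that only the detectability of $(A,\sqrt{Q})$ is actually used in this argument; the stabilizability of $(A,B)$, though assumed in the statement, plays no role in the present lemma.
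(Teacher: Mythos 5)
Your proof is correct and follows essentially the same route as the paper's: a direct exponential bound for the ``if'' direction, and, for the ``only if'' direction, the observation that finiteness of $V_K(0,\infty)$ along an eigenvector $v$ of $A+BK$ with $\mathrm{Re}(\lambda)\geq 0$ forces $\sqrt{Q}\,v=0$ and $\sqrt{R}\,Kv=0$. The only divergence is the final step: the paper first shows the augmented pair $(A+BK,\,\col(\sqrt{Q},\sqrt{R}K))$ is detectable via output injection and then applies the PBH test to that pair, whereas you extract $Kv=0$ from the invertibility of $\sqrt{R}$ and apply PBH directly to $(A,\sqrt{Q})$ --- a slightly more direct finish that, as you correctly remark, makes explicit that the stabilizability hypothesis is not used in this lemma.
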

\begin{proof}
	The proof of this lemma is inspired by that of Lemma 12.1 in \cite{Glad2010book}.
	
	{\color{black}First, if $A+BK$ is Hurwitz, then, there exist two positive constants $\alpha,\beta>0$ such that $||e^{(A+BK)t}||\leq \beta e^{-\alpha t}, \forall t\geq 0$.
	Since
	\begin{align*}
		V_K(0,\infty)&=\int_{0}^{\infty}e^{(A+BK)^Tt}(Q+K^TRK)e^{(A+BK)t}dt,
	\end{align*}
	which implies
	\begin{align*}
		||V_K(0,\infty)||
		&\leq \int_{0}^{\infty}||Q+K^TRK||\beta^2e^{-2\alpha t}dt\\
		&<\infty,
	\end{align*}
	$V_K(0,\infty)$ is finite if $A+BK$ is Hurwitz.}
	
	We prove the only if part by contradiction.
	Suppose $X=V_K(0,\infty)$ is finite, and  $A+BK$ has an eigenvalue $\lambda$ with non-negative real part. Let  $v \neq 0$ be such that  $(A+BK)v=\lambda v$ and
	$\bar{v}$ be the complex conjugate of
	$v$. Then,  $e^{(A+BK)t}v=e^{\lambda t}v$, which implies
	\begin{align*}
		\bar{v}^TXv=\bar{v}^T(Q+K^TRK)v\int_{0}^{\infty}e^{2Re{(\lambda)} t}dt,
	\end{align*}
	where $Re(\lambda)\geq 0$ denotes the real part of $\lambda$.
	Thus $\bar{v}^T(Q+K^TRK)v=0$ since  the real part of $\lambda$ is non-negative.
	
	Notice $Q+K^TRK=\begin{bmatrix}
		\sqrt{Q}^T &K^T\sqrt{R}^T
	\end{bmatrix}\begin{bmatrix}
		\sqrt{Q}\\ \sqrt{R}K
	\end{bmatrix}\geq 0$, where $\sqrt{R}$ is an invertible matrix since $R>0$.
	Let $L$ be such that $A-L\sqrt{Q}$ is Hurwitz. Then,
	\begin{align*}
		&(A+BK)-\begin{bmatrix}
			L& B\sqrt{R}^{-1}
		\end{bmatrix}\begin{bmatrix}
			\sqrt{Q}\\ \sqrt{R}K
		\end{bmatrix}\\=&A-L\sqrt{Q},
	\end{align*}
	which means that the pair $(A+BK, \begin{bmatrix}
		\sqrt{Q}\\ \sqrt{R}K
	\end{bmatrix})$ is detectable.
	
	{\color{black}Nevertheless, since $\bar{v}^T(Q+K^TRK)v=0$ implies $\begin{bmatrix}
		\sqrt{Q}\\ \sqrt{R}K
	\end{bmatrix}v=0$, which together with the fact that $(A+BK)v=\lambda v$ implies $\begin{bmatrix}
	\lambda I-(A+BK)\\ \begin{bmatrix}
		\sqrt{Q}\\ \sqrt{R}K
	\end{bmatrix}
	\end{bmatrix}v={0}, v\neq { 0}$. Thus, rank$(\begin{bmatrix}
	\lambda I-(A+BK)\\ \begin{bmatrix}
		\sqrt{Q}\\ \sqrt{R}K
	\end{bmatrix}
	\end{bmatrix})<n$.
	Since $Re(\lambda)\geq0$, by PBH test,   the pair $(A+BK, \begin{bmatrix}
		\sqrt{Q}\\ \sqrt{R}K
	\end{bmatrix})$ is not detectable, which
	contradicts  the detectability of $(A+BK, \begin{bmatrix}
		\sqrt{Q}\\ \sqrt{R}K
	\end{bmatrix})$. }Thus, $A+BK$ must be Hurwitz when $X=V_K(0,\infty)$ is finite.
	
	Besides, in this situation, $X$ is the unique positive semidefinite solution to the following Lyapunov equation:
	\begin{align*}
		(A+BK)^TX+X(A+BK)+Q+K^TRK=0.
	\end{align*}
	
\end{proof}

With Lemma \ref{lemerrV} and Lemma \ref{lemonlyif}, we are now ready to {\color{black}prove} that the Kleinman's algorithm \eqref{aleq1sin} can be extended to the case where $(A,B)$ is stabilizable and $(A,\sqrt{{Q}})$ is detectable as follows.

\begin{thm}\label{thmconpi}
	Suppose $(A, B)$ is stabilizable and  $(A,\sqrt{{Q}})$ is detectable. Then, starting from  any $K_0^P$ such that $A+BK_0^P$ is Hurwitz,  the Kleinman's algorithm \eqref{aleq1sin} guarantees the following properties:
	\begin{enumerate}
		\item $\sigma(A_{k}) \subset \mathbb{C}_-$;
		\item ${P}^*\leq {P}^P_{k+1}\leq {P}^P_k$;
		\item $\lim\limits_{k\to \infty}{K}^P_k={K}^*,\lim\limits_{k\to \infty}{P}^P_k={P}^*$.
	\end{enumerate}
\end{thm}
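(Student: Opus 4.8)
The plan is to reproduce the classical Kleinman monotone-convergence argument, but to replace every appeal to observability of $(A,\sqrt{Q})$ by the two supporting lemmas of this appendix, which need only detectability. Properties 1 and 2 will be proved jointly by induction on $k$, and property 3 will follow by monotone convergence of symmetric matrices. Throughout, the crucial bookkeeping device is that, whenever $A_k = A+BK^P_k$ is Hurwitz, Lemma \ref{lemonlyif} guarantees that the unique solution of the Lyapunov equation in \eqref{aleq1sin} is precisely the finite cost matrix ${P}^P_k = V_{K^P_k}(0,\infty) \geq 0$, so that the difference formulas of Lemma \ref{lemerrV} become applicable.

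First I would prove property 1 by induction, the base case being the hypothesis that $A_0$ is Hurwitz. Assuming $A_k$ is Hurwitz, so that ${P}^P_k = V_{K^P_k}(0,\infty)\geq 0$ is finite, I would complete the square (using $B^T {P}^P_k = -R K^P_{k+1}$ from the second line of \eqref{aleq1sin}) to rewrite the first line of \eqref{aleq1sin} as a Lyapunov equation for $A_{k+1}$:
\begin{align*}
A_{k+1}^T {P}^P_k + {P}^P_k A_{k+1} + Q + (K^P_{k+1})^T R K^P_{k+1} + (K^P_{k+1}-K^P_k)^T R (K^P_{k+1}-K^P_k) = 0.
\end{align*}
To conclude that $A_{k+1}$ is Hurwitz (not merely stable) I would mimic the PBH eigenvector argument of Lemma \ref{lemonlyif}: if $A_{k+1} v = \lambda v$ with $\mathrm{Re}(\lambda)\geq 0$ and $v\neq 0$, the Lyapunov equation forces $\bar v^T(\cdot)v = 0$, hence $\sqrt{Q}\,v = 0$ and $K^P_{k+1} v = 0$, so $Av = \lambda v$ with $\sqrt{Q}\,v=0$, contradicting detectability of $(A,\sqrt{Q})$. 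This simultaneously shows ${P}^P_{k+1} = V_{K^P_{k+1}}(0,\infty)$ is finite, keeping the induction self-consistent.

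Next I would derive property 2 from Lemma \ref{lemerrV}, both bounds coming from \eqref{v1-v22}. For the upper bound ${P}^P_{k+1}\leq {P}^P_k$, take $K_1 = K^P_{k+1}$, $K_2 = K^P_k$; the identity $B^T {P}^P_k + R K^P_k = -R(K^P_{k+1}-K^P_k)$ collapses the integrand to $-(K^P_{k+1}-K^P_k)^T R (K^P_{k+1}-K^P_k)\leq 0$. For the lower bound ${P}^*\leq {P}^P_{k+1}$, take $K_1 = K^P_{k+1}$, $K_2 = K^*$, using that the positive semi-definite solution is the stabilizing one, so $A+BK^*$ is Hurwitz and $B^T P^* + R K^* = 0$; the cross terms then vanish and the integrand reduces to the manifestly nonnegative $(K^P_{k+1}-K^*)^T R (K^P_{k+1}-K^*)$.

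Finally, property 3 follows because $\{{P}^P_k\}$ is monotonically non-increasing and bounded below by $P^*\geq 0$, hence converges to some $P^\infty\geq P^*$; then $K^P_k \to -R^{-1}B^T P^\infty$, and passing to the limit in \eqref{aleq1sin} shows that $P^\infty$ solves \eqref{areli}. By the uniqueness of the positive semi-definite solution (Theorem \ref{thmpsd}), $P^\infty = P^*$ and $K^P_k \to K^*$. I expect the main obstacle to be property 1 under mere detectability: establishing that each $A_{k+1}$ is Hurwitz, which is exactly where the observability-free PBH argument borrowed from Lemma \ref{lemonlyif} is indispensable, and which also legitimizes representing every ${P}^P_k$ as the finite cost matrix $V_{K^P_k}(0,\infty)$ so that Lemma \ref{lemerrV} can be invoked in the remaining steps.
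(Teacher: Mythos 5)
Your proof is correct and follows the same overall framework as the paper's (Newton--Kleinman monotone convergence built on Lemma \ref{lemerrV}, Lemma \ref{lemonlyif} and the uniqueness statement of Theorem \ref{thmpsd}), but the key inductive step is handled by a different mechanism. The paper shows that $A_{k+1}$ is Hurwitz indirectly: it applies \eqref{v1-v21} with $K_1=K^P_k$, $K_2=K^P_{k+1}$, notes that the cross terms vanish because $B^TP^P_k+RK^P_{k+1}=0$, concludes $0\le V_{k+1}\le V_k<\infty$, and then invokes the ``only if'' direction of Lemma \ref{lemonlyif} to deduce stability of $A_{k+1}$ from finiteness of $V_{k+1}$. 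You instead complete the square to obtain the Lyapunov identity
\begin{align*}
A_{k+1}^T {P}^P_k + {P}^P_k A_{k+1} + Q + (K^P_{k+1})^T R K^P_{k+1} + (K^P_{k+1}-K^P_k)^T R (K^P_{k+1}-K^P_k) = 0
\end{align*}
and run the PBH eigenvector argument directly on it, which re-derives inline the detectability argument that the paper has packaged into Lemma \ref{lemonlyif}; only afterwards do you use \eqref{v1-v22} for both inequalities of property 2. Your route has a small technical advantage: the paper's use of \eqref{v1-v21} at this stage formally writes an infinite-horizon integral weighted by $e^{(A+BK^P_{k+1})\tau}$ before $A_{k+1}$ is known to be Hurwitz (one must really argue over finite horizons and pass to the limit using monotonicity in $T$), whereas your argument establishes the Hurwitz property first, so every subsequent infinite integral is manifestly well defined. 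The price is a little extra algebra in the completed square; the limit argument for property 3 is identical in both proofs.
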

\begin{proof}
	The proof of Theorem \ref{thmconpi} is similar to the method in \cite{Kleinman}. First, for convenience, let $V_k=V_{K_k^P}(0,\infty)$. Then, by Lemma \ref{lemonlyif},
	$V_k$ is finite if and only if $A_k\triangleq A+BK_k^P$ is Hurwitz. In this case, $V_k$ is the unique positive semi-definite solution to the following Lyapunov equation
	\begin{align}
		A_k^TV_k+V_kA_k+Q+(K_k^P)^TRK_k^P=0.
	\end{align}
	That is, $V_k={P}^P_k$ when $A_k$ is Hurwitz.
	
	Now, let  $V_0={P}^P_0$. Using \eqref{aleq1sin} and \eqref{v1-v21} gives
	\begin{align}
		\begin{split}
			V_0-V_1=&\int_{0}^{\infty} e^{(A+BK_1^P)^T\tau}[(K_0^P-K_1^P)^TR(K_0^P-K_1^P)]\\& e^{(A+BK_1^P)\tau}d\tau\geq 0,
		\end{split}
	\end{align}
	which implies $V_0\geq V_1 \geq 0$. Hence, $V_1$ is finite and $K_1^P$ updated from \eqref{aleq1sin} is stabilizing. By induction, we conclude that $\sigma(A_{k}) \subset \mathbb{C}_-, V_k={P}^P_k$ and $ {P}^P_{k}\geq {P}^P_{k+1}\geq 0$ where $ k=0,1,\cdots$. Since ${P}^P_k$ is monotonically decreasing with a lower bound, the series $\{{P}^P_k,k=0,1,\cdots\}$ must have a limit. Taking the limit on both sides of \eqref{aleq1sin} yields $\lim\limits_{k\to \infty}{K}^P_k={K}^*$ and $\lim\limits_{k\to \infty}{P}^P_k={P}^*$.
	
	On the other hand, using \eqref{v1-v22}, we obtain
	\begin{align}
		\begin{split}
			V_k-P^*=&\int_{0}^{\infty} e^{(A+BK_k^P)^T\tau}[(K_k^P-K^*)^TR(K_k^P-K^*)]\\& e^{(A+BK_k^P)\tau}d\tau\geq 0,
		\end{split}
	\end{align}
	which implies $V_k={P}^P_k\geq P^*$. The proof is thus completed.
	
\end{proof}

\section{Extended Model-Based VI Approach}\label{app2}

In this appendix, we further {\color{black}prove} that the VI Algorithm \ref{vialg1} can also be extended to the case where  $(A,B)$ is stabilizable and $(A,\sqrt{{Q}})$ is detectable.
Moreover, we will prove that the initial value of ${P}^V_0$ in Algorithm \ref{vialg1} can be any positive semidefinite matrix as shown in
the following Algorithm \ref{vialg1ex}.
\begin{algorithm}
	\caption{{\color{black}Model-based} VI  Algorithm for solving \eqref{areli} }\label{vialg1ex}
	\begin{algorithmic}[1]
		\State Choose ${P}^V_0=({P}^V_0)^T\geq0$. $k,q\leftarrow 0$.
		\Loop
		\State $\tilde{P}^V_{k+1}\leftarrow {P}^V_k+\epsilon_k (A^T{P}^V_k+{P}^V_kA-{P}^V_kB{R}^{-1}B^T{P}^V_k+{Q})$.
		\If{$\tilde{P}^V_{k+1}\notin B_q$}
		\State ${P}^V_{k+1}\leftarrow {P}^V_0$. $q\leftarrow q+1$.
		\ElsIf{$||\tilde{P}^V_{k+1}-{P}^V_k||/\epsilon_k<\varepsilon$}
		\State \Return ${P}^V_k$ as an approximation to ${P}^*$.
		\Else
		\State ${P}^V_{k+1}\leftarrow \tilde{P}^V_{k+1}$.
		\EndIf
		\State $k \leftarrow k+1$.
		\EndLoop
	\end{algorithmic}
\end{algorithm}

We first summarize  Theorem 17 of \cite{kucera1973} as follows.
\begin{lem}\label{lemdifric}
	Suppose  $(A,B)$ is stabilizable and $(A,\sqrt{{Q}})$ is detectable. Let $P^*$ be the unique positive semi-definite solution to \eqref{areli}.  Then,  for any $P(0)\geq 0$,
	the solution $P (t)$ of the following differential Riccati equation
	\begin{align}\label{dreq}
		\begin{split}
			\dot{P}=A^TP+PA+Q-PBR^{-1}B^TP
		\end{split}
	\end{align}
	satisfies $\lim\limits_{t\to \infty}P(t)=P^*$.
\end{lem}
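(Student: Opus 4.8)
The plan is to relate the flow of the differential Riccati equation \eqref{dreq} to a family of finite-horizon LQR problems and then to pin down the limit using the uniqueness asserted in Theorem \ref{thmpsd}. Write $P(t;P_0)$ for the solution of \eqref{dreq} with $P(0)=P_0$, and abbreviate $S=BR^{-1}B^T$, $K^*=-R^{-1}B^TP^*$ and $A_*\triangleq A-SP^*=A+BK^*$. The first step is to record the standard time-reversal identity $x_0^TP(t;P_0)x_0=V_t^{P_0}(x_0)$, where $V_t^{P_0}(x_0)$ is the minimum over $u$ of $\int_0^t(x^TQx+u^TRu)\,ds+x(t)^TP_0x(t)$ along $\dot x=Ax+Bu$, $x(0)=x_0$; this follows by setting $P(\tau)=\Pi(t-\tau)$, where $\Pi$ is the backward Riccati solution with terminal value $\Pi(t)=P_0$. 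I would also invoke that, under stabilizability and detectability, the unique positive semi-definite solution $P^*$ of \eqref{areli} is the stabilizing one, so that $A_*$ is Hurwitz (consistent with the limiting closed loop in Theorem \ref{thmconpi}).

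Next I would settle the case $P_0=0$. Since the terminal penalty vanishes and every stage cost is nonnegative, $t\mapsto x_0^TP(t;0)x_0$ is nondecreasing for each $x_0$, so $P(t;0)$ is monotonically nondecreasing in the Loewner order. Feeding the suboptimal control $u=K^*x$ into the horizon-$t$ problem (with no terminal term) bounds the cost above by the infinite-horizon cost, giving $P(t;0)\le P^*$ for all $t$; here the Hurwitz property of $A_*$ guarantees finiteness. A monotone, bounded trajectory of symmetric matrices converges, say $P(t;0)\to\hat P\le P^*$; passing to the limit in \eqref{dreq} shows $\hat P$ solves \eqref{areli}, since the right-hand side is continuous and $\dot P(t;0)\to 0$ because a convergent trajectory whose derivative converges has zero limiting derivative. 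As $\hat P\ge 0$ is then a positive semi-definite solution of \eqref{areli}, Theorem \ref{thmpsd} forces $\hat P=P^*$.

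For general $P_0\ge 0$ I would squeeze. The lower bound is immediate: because $P_0\ge 0$ the terminal term is nonnegative, so $V_t^{P_0}(x_0)\ge V_t^{0}(x_0)=x_0^TP(t;0)x_0$, whence $\liminf_{t\to\infty}x_0^TP(t;P_0)x_0\ge x_0^TP^*x_0$. For the upper bound I again feed $u=K^*x$ into the horizon-$t$ problem; since $A_*$ is Hurwitz the trajectory decays exponentially, so the running cost tends to $x_0^TP^*x_0$ and the terminal term $x(t)^TP_0x(t)\to 0$, giving $\limsup_{t\to\infty}x_0^TP(t;P_0)x_0\le x_0^TP^*x_0$. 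Hence $x_0^TP(t;P_0)x_0\to x_0^TP^*x_0$ for every $x_0$, and evaluating at $x_0=e_i$ and $x_0=e_i+e_j$ (polarization) upgrades this to $P(t;P_0)\to P^*$ in norm.

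The step I expect to demand the most care is the $P_0=0$ case: establishing the monotonicity and the uniform bound $P(t;0)\le P^*$ cleanly, and then justifying that the limit actually solves \eqref{areli}. The identification $\hat P=P^*$ rests essentially on detectability, which is exactly what makes the positive semi-definite solution unique (Theorem \ref{thmpsd}); without it the flow could settle on a non-stabilizing solution. A purely algebraic alternative would difference \eqref{dreq} against \eqref{areli}, writing the error $\tilde P=P-P^*$ as the solution of the linear variational equation $\dot{\tilde P}=A_P^T\tilde P+\tilde P A_*$ with $A_P=A-SP(t)$, so that $\tilde P(t)=\Phi_P(t)^T\tilde P(0)e^{A_*t}$; there the exponential decay of $e^{A_*t}$ must be shown to dominate the growth of the transition matrix $\Phi_P$, which is the delicate point and is why I would favor the monotonicity/optimal-cost argument above.
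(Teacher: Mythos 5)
Your proof is correct, but it is worth knowing that the paper does not prove this lemma at all: it is imported verbatim as Theorem 17 of \cite{kucera1973}, so there is no internal argument to compare against. What you supply is the classical self-contained derivation via the finite-horizon value function: the time-reversal identity $x_0^TP(t;P_0)x_0=V_t^{P_0}(x_0)$, monotone convergence plus the bound $P(t;0)\le P^*$ for the zero initial condition, identification of the limit through Theorem \ref{thmpsd}, and a squeeze between $V_t^0$ and the suboptimal closed-loop cost for general $P_0\ge 0$. This buys transparency that the citation does not: it makes explicit exactly where detectability enters (uniqueness of the positive semi-definite ARE solution, and finiteness of the cost under $u=K^*x$), and it quietly disposes of finite escape time, since the identification with a finite-horizon LQR value function bounds $P(t;P_0)$ above by the cost of the suboptimal policy and below by $0$ on every interval. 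Two points you should tighten rather than ``invoke'': (i) the claim that $P^*$ is stabilizing under stabilizability and detectability deserves its own two lines --- rewrite \eqref{areli} as $(A-SP^*)^TP^*+P^*(A-SP^*)=-(Q+P^*SP^*)$ and apply the PBH test to any eigenvector of $A-SP^*$ with $\mathrm{Re}\,\lambda\ge 0$ to contradict detectability of $(A,\sqrt{Q})$; appealing to Theorem \ref{thmconpi} here is circular in spirit even if not in fact, since that theorem only asserts $\sigma(A_k)\subset\mathbb{C}_-$ at each finite iteration; (ii) in the $P_0=0$ step, the assertion that a convergent trajectory with convergent derivative has zero limiting derivative should be stated as the elementary contradiction $P(t)\approx P(t_0)+L(t-t_0)$ for $L\neq 0$. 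With those two items written out, your argument is a complete and arguably preferable substitute for the external citation; the paper's route is shorter but leaves the reader to track down a 1973 reference for a fact the appendices lean on.
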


{\color{black}\begin{rem}
		Since $P^*\geq 0$ is on the boundary of $\mathcal{P}^n$,  Lemma \ref{lemdifric} does not guarantee that $P^*$ is a locally asymptotically stable equilibrium  of \eqref{dreq}.
		Proposition 3.6 in \cite{bian2019} further shows that $P^* \geq 0$ is a  locally asymptotically stable equilibrium point for \eqref{dreq}.
			To establish our main result, we need to further prove that  any $P(0) \geq 0$ is in the interior of the domain of attraction of $P^*$ when $(A,B)$ is stabilizable and $(A,\sqrt{{Q}})$ is detectable.  For this purpose, an extended version of Lemma \ref{lemdifric} given by \cite{kailath1976} is summarized as follows.
\end{rem}}

\begin{lem}\label{lemdifricex}
	Suppose $(A,B)$ is stabilizable and $(A,\sqrt{{Q}})$ is detectable. Let
	$P^a$ be the unique positive semi-definite solution to the following algebraic Riccati equation:
	\begin{align*}
		{P}^aA^T+A{P}^a+B{R}^{-1}B^T-{P}^a{Q}{P}^a=0
	\end{align*}
	and $P^*$ be 	the unique positive semi-definite solution to \eqref{areli}. Then,  for any $P(0)\in \mathcal{S}^n$ such that
	\begin{align*}
		x^T[{P}^aP(0){P}^a+{P}^a]x>0 ~~~ \textup{for all } x \textup{ such that } P^ax\neq 0,
	\end{align*}
	the solution of the differential Riccati equation \eqref{dreq}
	satisfies $\lim\limits_{t\to \infty}P(t)=P^*$.
\end{lem}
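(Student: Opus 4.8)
The plan is to shift the equilibrium to the origin and then linearize. First I would introduce the error variable $\Delta(t)\triangleq P(t)-P^*$. Since $P^*$ solves \eqref{areli}, subtracting the algebraic equation from the differential equation \eqref{dreq} and writing $S\triangleq BR^{-1}B^T$ and $A_c\triangleq A-SP^*$ gives the homogeneous Riccati equation
\begin{align*}
\dot{\Delta}=A_c^T\Delta+\Delta A_c-\Delta S\Delta,
\end{align*}
which has $\Delta=0$ as an equilibrium and carries no constant term. By Theorem \ref{thmpsd} and the stabilizing property of the positive semi-definite solution $P^*$, the matrix $A_c$ is Hurwitz, so the goal $\lim_{t\to\infty}P(t)=P^*$ is equivalent to $\lim_{t\to\infty}\Delta(t)=0$.

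Next I would solve the $\Delta$-equation explicitly. Because it is homogeneous, its associated Hamiltonian $\left[\begin{smallmatrix}-A_c & S\\ 0 & A_c^T\end{smallmatrix}\right]$ is block upper-triangular, and Radon's lemma yields the closed form
\begin{align*}
\Delta(t)=e^{A_c^T t}\Delta(0)\bigl(I+G(t)\Delta(0)\bigr)^{-1}e^{A_c t},\qquad G(t)=\int_0^t e^{A_c\tau}Se^{A_c^T\tau}\,d\tau,
\end{align*}
valid as long as $I+G(t)\Delta(0)$ is nonsingular. Here $G(t)$ increases monotonically to $G_\infty\geq0$, the unique solution of $A_cG_\infty+G_\infty A_c^T+S=0$. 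Since $A_c$ is Hurwitz we have $e^{A_c t}\to0$, so $\Delta(t)\to0$ provided $I+G(t)\Delta(0)$ stays nonsingular for every $t\in[0,\infty]$; that is, provided the solution has no finite escape time and the limiting factor $I+G_\infty\Delta(0)$ is invertible.

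The decisive step is then to recognize that the hypothesis on $P(0)$ expressed through $P^a$ is exactly this nonsingularity guarantee. I would do this by linking $P^a$ to the \emph{antistabilizing} solution $P^-$ of \eqref{areli} (the solution for which $A-SP^-$ is anti-Hurwitz): one checks, e.g. by substituting into the two algebraic equations, that $P^a=-(P^-)^{-1}$ and $G_\infty=(P^*-P^-)^{-1}$ whenever these inverses exist, so that the condition $x^T[P^aP(0)P^a+P^a]x>0$ for all $x$ with $P^ax\neq0$ is precisely the generalized inequality $P(0)\succ P^-$, equivalently $\Delta(0)\succ-G_\infty^{-1}$. Because $G(t)$ is monotone, this keeps $I+G(t)\Delta(0)$ nonsingular for all $t$, which closes the argument; a scalar computation already exhibits all of these identities.

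The hard part is carrying this identification through the \emph{singular} regime that is allowed here: under mere stabilizability of $(A,B)$ and detectability of $(A,\sqrt{Q})$, both $P^a$ and $P^*-P^-$ may fail to be invertible, so the clean reciprocal relations above break down and one cannot simply write $-(P^a)^{-1}$. The purpose of the quadratic form $x^T[P^aP(0)P^a+P^a]x$ together with the qualifier ``$P^ax\neq0$'' is exactly to encode $P(0)\succ P^-$ on $\mathrm{range}(P^a)$ while leaving the directions in $\ker P^a$ unconstrained, where the flow is automatically well-behaved. Making this rigorous — proving nonsingularity of $I+G(t)\Delta(0)$ for all finite $t$, not merely in the limit, and disposing of the kernel directions — is where I would invoke the domain-of-attraction analysis of \cite{kailath1976}; this is the main obstacle and the reason the result is stated as an extension of Lemma \ref{lemdifric}.
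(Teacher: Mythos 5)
The paper does not actually prove Lemma \ref{lemdifricex}: it is imported verbatim as ``an extended version of Lemma \ref{lemdifric} given by \cite{kailath1976},'' so there is no in-paper argument to compare yours against. Your outline follows the classical route of that reference — shift by $P^*$ to get the homogeneous equation $\dot{\Delta}=A_c^T\Delta+\Delta A_c-\Delta S\Delta$ with $A_c=A-SP^*$ Hurwitz, write the Radon linear-fractional solution $\Delta(t)=e^{A_c^Tt}\Delta(0)(I+G(t)\Delta(0))^{-1}e^{A_ct}$, and identify the hypothesis on $P(0)$ with $P(0)\succ P^-$ via $P^a=-(P^-)^{-1}$ and $G_\infty=(P^*-P^-)^{-1}$. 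Those identities are correct where the inverses exist (e.g., $A_c^T(P^*-P^-)+(P^*-P^-)A_c+(P^*-P^-)S(P^*-P^-)=0$ does give $G_\infty=(P^*-P^-)^{-1}$ by uniqueness of the Lyapunov solution), and in the regular case the nonsingularity of $I+G(t)\Delta(0)$ follows cleanly from $G(t)^{-1}+\Delta(0)\geq G_\infty^{-1}+\Delta(0)>0$.

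The genuine gap is exactly where you say it is, and you do not close it. Under mere stabilizability of $(A,B)$ and detectability of $(A,\sqrt{Q})$, the Gramian $G_\infty$ and the matrix $P^a$ can both be singular, the antistabilizing solution $P^-$ need not exist as a bona fide symmetric solution of \eqref{areli}, and the reciprocal identities on which your identification rests break down. In that regime you must (i) show directly that the condition $x^T[P^aP(0)P^a+P^a]x>0$ on $\mathrm{range}(P^a)$ still forces $I+G(t)\Delta(0)$ to be nonsingular for every finite $t$ (ruling out finite escape) and in the limit, and (ii) handle the directions in $\ker P^a$, where your claim that ``the flow is automatically well-behaved'' is asserted but not argued. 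You explicitly defer both points to the domain-of-attraction analysis of \cite{kailath1976}; since the lemma is trivial outside the singular regime, deferring precisely these steps means the proposal is an accurate roadmap of the known proof rather than a proof. Given that the paper itself only cites the result, this is a reasonable account of why the lemma holds, but it would not stand alone as a replacement for the citation.
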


To facilitate our further discussion, a useful inequality is given in the following lemma.
\begin{lem}\label{lemnormineq}
	For any  $U=\begin{bmatrix}
		U_{11}&U_{12}\\U_{21}& U_{22}
	\end{bmatrix}\in \mathcal{S}^n$ with $U_{11}\in \mathcal{S}^m, 1\leq m\leq n$ being a top principle minor of $U$ and $U_{12},U_{21},U_{22}$ being block matrices with compatible size, the following inequality holds:
	\begin{align}\label{normineq}
		||U_{11}||\leq ||U||.
	\end{align}
\end{lem}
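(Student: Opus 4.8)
The plan is to exploit the variational (Rayleigh quotient) characterization of the induced norm for symmetric matrices together with a simple zero-padding embedding. Since $U\in\mathcal{S}^n$ and its leading principal block $U_{11}\in\mathcal{S}^m$ are both symmetric, their induced norms coincide with their largest absolute eigenvalue. By the Courant--Fischer theorem this yields
\begin{align*}
\|U\|=\max_{\|v\|=1,\,v\in\mathbb{R}^n}|v^TUv|,\qquad
\|U_{11}\|=\max_{\|w\|=1,\,w\in\mathbb{R}^m}|w^TU_{11}w|.
\end{align*}
First I would recall (or briefly justify) this identity, noting that for a symmetric matrix the spectral norm equals $\max_i|\lambda_i|$, and that by the extremal characterization of the eigenvalues one has $\max_i\lambda_i=\max_{\|v\|=1}v^TUv$ and $\min_i\lambda_i=\min_{\|v\|=1}v^TUv$, so that $\max_i|\lambda_i|=\max_{\|v\|=1}|v^TUv|$.

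The key step is the embedding. For any unit vector $w\in\mathbb{R}^m$, define the zero-padded vector $v=\col(w,{\bf0})\in\mathbb{R}^n$. Then $\|v\|=\|w\|=1$, and since $U_{11}$ is the top-left $m\times m$ block of $U$, a direct block computation gives $v^TUv=w^TU_{11}w$. Because these embedded vectors form a subset of all unit vectors in $\mathbb{R}^n$, taking the maximum over $\|w\|=1$ yields
\begin{align*}
\|U_{11}\|=\max_{\|w\|=1}|w^TU_{11}w|=\max_{\|w\|=1}|v^TUv|\le\max_{\|v\|=1,\,v\in\mathbb{R}^n}|v^TUv|=\|U\|,
\end{align*}
which is exactly \eqref{normineq}.

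The main obstacle, such as it is, is entirely in the preliminary step: establishing the variational identity $\|A\|=\max_{\|v\|=1}|v^TA v|$ for symmetric $A$ and confirming the maximum is attained (by compactness of the unit sphere). Once that standard fact is in hand, the remainder is a one-line block-matrix identity plus the observation that restricting the feasible set of a maximization can only decrease its value. An alternative route would invoke the Cauchy eigenvalue interlacing theorem for symmetric matrices, but the Rayleigh-quotient embedding argument is more self-contained and avoids citing additional machinery.
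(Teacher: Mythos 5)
Your proposal is correct and follows essentially the same route as the paper's own proof: zero-pad a unit vector $w\in\mathbb{R}^m$ to $v=\col(w,{\bf 0})\in\mathbb{R}^n$, observe $w^TU_{11}w=v^TUv$, and conclude via the Rayleigh-quotient characterization $\|A\|=\max_{\|v\|=1}|v^TAv|$ for symmetric $A$. The only difference is that you spell out the justification of that variational identity, which the paper takes for granted.
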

\begin{proof}
	For any $v\in \mathbb{R}^m$, $v^TU_{11}v=\begin{bmatrix}
		v\\{\bf 0}
	\end{bmatrix}^TU\begin{bmatrix}
		v\\{\bf 0}
	\end{bmatrix}$. Thus, $\max_{||v||=1}|v^TU_{11}v|\leq \max_{||w||=1}|w^TUw|$. As a result, \eqref{normineq} holds.
\end{proof}

With Lemma \ref{lemdifricex} and Lemma \ref{lemnormineq}, we can prove the following result.
\begin{lem}\label{lemasy}
	If $(A,B)$ is stabilizable and $(A,\sqrt{{Q}})$ is detectable, then, for any  $\Pi\in \mathcal{P}^n$,
	there exists a scalar $\delta=\delta(P^a)>0$ such that  the solution of the differential Riccati equation \eqref{dreq}
	satisfies $\lim\limits_{t\to \infty}P(t)=P^*$ for any $P(0)\in \{P(0)\in \mathcal{S}^n|~ ||P(0)-\Pi|| < \delta \}$.
\end{lem}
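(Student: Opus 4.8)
The plan is to reduce the convergence claim to the sufficient condition supplied by Lemma \ref{lemdifricex} and then to verify that this condition holds for every $P(0)$ in a ball around $\Pi$ whose radius depends only on $P^a$. To exploit the structure, I would first pass to an orthonormal eigenbasis of the symmetric matrix $P^a\geq 0$, so that $P^a=\mathrm{blockdiag}(\Lambda,\mathbf{0})$ with $\Lambda>0$ collecting the positive eigenvalues. This is without loss of generality: the spectral norm $\|\cdot\|$ is invariant under an orthogonal change of basis, a principal submatrix of a positive semi-definite matrix is again positive semi-definite, and the hypothesis of Lemma \ref{lemdifricex} is basis-covariant. The degenerate case $P^a=\mathbf{0}$ is trivial, since then $P^ax=0$ for all $x$ and the hypothesis of Lemma \ref{lemdifricex} is vacuously satisfied for every $P(0)$, so any $\delta>0$ works.

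Next I would rewrite the hypothesis of Lemma \ref{lemdifricex} in this basis. Writing $P(0)=\begin{bmatrix}P_{11}&P_{12}\\P_{21}&P_{22}\end{bmatrix}$, a short computation gives $P^aP(0)P^a+P^a=\mathrm{blockdiag}(\Lambda P_{11}\Lambda+\Lambda,\mathbf{0})$, so that for $x=\mathrm{col}(x_R,x_N)$ one has $x^T[P^aP(0)P^a+P^a]x=x_R^T(\Lambda P_{11}\Lambda+\Lambda)x_R$. Since $\Lambda>0$, the condition $P^ax\neq0$ is equivalent to $x_R\neq0$, so the requirement ``$x^T[P^aP(0)P^a+P^a]x>0$ whenever $P^ax\neq0$'' collapses to the single finite-dimensional condition $\Lambda P_{11}\Lambda+\Lambda>0$, equivalently $\Lambda^{1/2}P_{11}\Lambda^{1/2}+I>0$. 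This is the decisive reduction: a condition quantified over the non-compact set $\{x:P^ax\neq0\}$ becomes positive definiteness of one matrix built from the top-left block $P_{11}$.

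I would then control $P_{11}$ under perturbation. Set $E\triangleq P(0)-\Pi$, which is symmetric. Because $\Pi\geq0$, its principal block satisfies $\Pi_{11}\geq0$, hence $P_{11}=\Pi_{11}+E_{11}\geq E_{11}\geq-\|E_{11}\|I$. Lemma \ref{lemnormineq} enters exactly here, bounding the principal block by the whole matrix: $\|E_{11}\|\leq\|E\|$. Using $\|\Lambda^{1/2}\|^2=\|\Lambda\|=\|P^a\|$, this gives $\Lambda^{1/2}P_{11}\Lambda^{1/2}+I\geq(1-\|P^a\|\,\|E\|)I$, which is positive definite as soon as $\|E\|<1/\|P^a\|$. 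Choosing $\delta=1/\|P^a\|$ therefore makes the hypothesis of Lemma \ref{lemdifricex} hold for every $P(0)$ with $\|P(0)-\Pi\|<\delta$, so that $\lim_{t\to\infty}P(t)=P^*$. Note that $\delta$ depends only on $P^a$, matching the asserted form $\delta=\delta(P^a)$ and yielding a radius that is uniform over all $\Pi\in\mathcal{P}^n$.

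The main obstacle I anticipate is precisely the non-compactness of the admissible set $\{x:P^ax\neq0\}$: a naive continuity-plus-compactness perturbation argument fails there, because as $x$ approaches $\ker P^a$ the positive quantity $x^TP^ax$ and the perturbation term both vanish, and one must show the former is never overtaken by the latter. The block-diagonal reduction above is what circumvents this, converting the question into a single positive-definiteness test on the range of $P^a$, with Lemma \ref{lemnormineq} providing the uniform control $\|E_{11}\|\leq\|E\|$ of the perturbed block that closes the estimate.
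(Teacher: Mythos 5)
Your proof is correct and follows essentially the same route as the paper's: both reduce the claim to the sufficient condition of Lemma \ref{lemdifricex}, diagonalize $P^a$ to isolate the range of $P^a$, and invoke Lemma \ref{lemnormineq} to bound the principal block of $P(0)-\Pi$ by $\|P(0)-\Pi\|$. Your version is marginally tidier in that you convert the quantified condition into an exact equivalence ($\Lambda^{1/2}P_{11}\Lambda^{1/2}+I>0$), explicitly dispose of the degenerate case $P^a=\mathbf{0}$, and obtain the slightly larger radius $\delta=1/\|P^a\|$ in place of the paper's $\delta=\lambda_{min}(\Lambda)/(2\|\Lambda\|^2)$, but these are refinements of bookkeeping rather than a different argument.
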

\begin{proof}
	By Lemma \ref{lemdifricex}, $\lim\limits_{t\to \infty}P(t)=P^*$ if $P(0)$ is such that
	\begin{align*}
		x^T[{P}^aP(0){P}^a+{P}^a]x>0 ~~~ \textup{for all } x \textup{ such that } P^ax\neq 0.
	\end{align*}
	Firstly, we have
	\begin{align*}
		\begin{split}
			&x^T[{P}^aP(0){P}^a+{P}^a]x\\=&x^T[{P}^a(P(0)-\Pi){P}^a+{P}^a\Pi{P}^a+{P}^a]x \\
			= & x^T{P}^a\Pi{P}^ax+\frac{1}{2}x^T{P}^ax+x^T[\frac{1}{2}P^a+{P}^a(P(0)-\Pi){P}^a]x.
		\end{split}
	\end{align*}
	Thus, to make $x^T[{P}^aP(0){P}^a+{P}^a]x>0$, we just need to make sure $\frac{1}{2}P^a+{P}^a(P(0)-\Pi){P}^a\geq0$.
	
	Let $P^a=T\begin{bmatrix}
		\Lambda & 0\\0 & 0
	\end{bmatrix}T^{-1}$, where $\Lambda>0$ and $T$ is some orthogonal matrix. Then, to make $\frac{1}{2}P^a+{P}^a(P(0)-\Pi){P}^a\geq0$, we just need to ensure $\frac{1}{2}\begin{bmatrix}
		\Lambda & 0\\0 & 0
	\end{bmatrix}+\begin{bmatrix}
		\Lambda & 0\\0 & 0
	\end{bmatrix}T^{-1}(P(0)-\Pi)T\begin{bmatrix}
		\Lambda & 0\\0 & 0
	\end{bmatrix}\geq0$. Let $T^{-1}(P(0)-\Pi)T=\begin{bmatrix}
		U_{11}&U_{12}\\U_{21}& U_{22}
	\end{bmatrix}$, where $U_{11},U_{12},U_{21}$ and $ U_{22}$ are block matrices with compatible dimensions. Then, $\begin{bmatrix}
		\Lambda & 0\\0 & 0
	\end{bmatrix}T^{-1}(P(0)-\Pi)T\begin{bmatrix}
		\Lambda & 0\\0 & 0
	\end{bmatrix}=\begin{bmatrix}
		\Lambda U_{11} \Lambda &0\\0&0
	\end{bmatrix}$. Thus, if $\frac{1}{2}\Lambda+\Lambda U_{11} \Lambda\geq 0$, then $\frac{1}{2}P^a+{P}^a(P(0)-\Pi){P}^a\geq0$, and thereby $x^T[{P}^aP(0){P}^a+{P}^a]x>0 \textup{ for all } x \textup{ such that } P^ax\neq 0$.
	
	To guarantee $\frac{1}{2}\Lambda+\Lambda U_{11} \Lambda\geq 0$, we just need to ensure $\min_{||v||=1}[v^T(\frac{1}{2}\Lambda+\Lambda U_{11}\Lambda)v]\geq 0$. For any $||v||=1$, notice
	\begin{align*}
		&v^T(\frac{1}{2}\Lambda+\Lambda U_{11}\Lambda)v\\
		\geq & \frac{1}{2}v^T\Lambda v-|v^T\Lambda U_{11}\Lambda v|\\
		\geq &  \frac{1}{2} \lambda_{min}(\Lambda)-||\Lambda||^2||U_{11}||\\
		\geq & \frac{1}{2} \lambda_{min}(\Lambda)-||\Lambda||^2||P(0)-\Pi||,
	\end{align*}
	where $ \lambda_{min}(\Lambda)>0$ is the minimum eigenvalue of $\Lambda$ and the last inequality is obtained by using \eqref{normineq} and the relation $||T^{-1}(P(0)-\Pi)T||= ||P(0)-\Pi||$.
	
	Choose $\delta=\frac{\lambda_{min}(\Lambda)}{2||\Lambda||^2}>0$. Then, {\color{black}for any $P(0) \in \mathcal{S}^n$ such that $||P(0)-\Pi|| < \delta$,}
	{\color{black}\begin{align*}
		&v^T(\frac{1}{2}\Lambda+\Lambda U_{11}\Lambda)v\\
		\geq & \frac{1}{2} \lambda_{min}(\Lambda)-||\Lambda||^2||P(0)-\Pi||\\
		> & \frac{1}{2} \lambda_{min}(\Lambda)-||\Lambda||^2\delta\\
		= & 0.
	\end{align*}}The proof is thus completed.

    {\color{black}Since $P^a=T\begin{bmatrix}
	\Lambda & 0\\0 & 0
\end{bmatrix}T^{-1}$,  $\Lambda$ is a function of $P_a$. Thus,  $\delta=\frac{\lambda_{min}(\Lambda)}{2||\Lambda||^2}$ is also determined by $P_a$. That is, $\delta=\delta(P^a)$ is also a function of $P_a$.}
\end{proof}

\begin{rem}\label{remasy}
	 Take $\Pi=P^*$, then Lemma \ref{lemasy} implies that $P^*\geq 0$ is{ locally asymptotically convergent}. Moreover,  Lemma \ref{lemasy} indicates $\mathcal{P}^n$ is in the interior of the domain of attraction of $P^*$. Thus, any $P(0)\geq 0$ is in the interior of the domain of attraction of $P^*$.
\end{rem}

The following lemma is extended from Lemma 3.4 of \cite{bian2016}.
\begin{lem}\label{lembounded}
	Assume $(A,B)$ is stabilizable and $(A,\sqrt{{Q}})$ is detectable. Then, for any ${P}^V_0\geq0$,  given $\{{P}^V_k\}_{k=0}^{\infty}$ defined by Algorithm \ref{vialg1ex}, there exist $N\in \mathbb{N}_+$ and a compact set $\mathcal{K}\subset \mathcal{P}^{n}$ with nonempty interior, such that $\{{P}^V_k\}_{k=N}^{\infty}\subset \mathcal{K}$ and $P^*\in \mathcal{K}\subset R_A$, where $R_A$ is the region of attraction of $P^*$.
\end{lem}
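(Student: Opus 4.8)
The plan is to follow the strategy of Lemma~3.4 of \cite{bian2016}, replacing its use of the asymptotic stability of the (interior) equilibrium by the positive semi-definite version furnished by Remark~\ref{remasy}, whose uniform attraction margin $\delta=\delta(P^a)$ comes from Lemma~\ref{lemasy}. Two facts make the task cleaner than it first appears. First, every iterate of Algorithm~\ref{vialg1ex} is positive semi-definite: $P^V_{k+1}$ is either the reset value $P^V_0\ge 0$ or equals $\tilde P^V_{k+1}\in B_q\subset\mathcal{P}^n$, so $\{P^V_k\}\subset\mathcal{P}^n$. Second, by Remark~\ref{remasy} we already have $\mathcal{P}^n\subset R_A$. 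Hence the inclusion $\mathcal{K}\subset R_A$ will be automatic for any $\mathcal{K}\subset\mathcal{P}^n$, and the real content to prove is that the iterates eventually stay in a \emph{bounded} subset of $\mathcal{P}^n$.

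To produce such a bound I would exploit that Step~3 of Algorithm~\ref{vialg1ex} is precisely the forward-Euler step $\tilde P^V_{k+1}=P^V_k+\epsilon_k f(P^V_k)$ for the differential Riccati equation \eqref{dreq}, where $f(P)=A^TP+PA+Q-PBR^{-1}B^TP$. By Lemma~\ref{lemdifric}, the solution of \eqref{dreq} started at $P(0)=P^V_0$ converges to $P^*$ and thus traces a curve whose closure $\mathcal{T}_0$ is a compact subset of $\mathcal{P}^n$ containing $P^*$. Using the diminishing step sizes ($\epsilon_k\to 0$ with $\sum_k\epsilon_k^2<\infty$) and the local contraction toward $P^*$ from Remark~\ref{remasy}, a Gronwall-type estimate should show that whenever the restart index is large enough the Euler iterates launched from $P^V_0$ remain, for \emph{all} later steps, inside a fixed compact neighbourhood $\mathcal{N}$ of $\mathcal{T}_0$, the accumulated discretization error staying below the uniform margin $\delta$ because of the summability of $\epsilon_k^2$.

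The final step is a reset-counting dichotomy. Suppose, toward a contradiction, that the reset branch of Algorithm~\ref{vialg1ex} is taken infinitely often. After the $j$-th reset the index is large for large $j$, so by the previous paragraph all subsequent iterates lie in $\mathcal{N}\cap\mathcal{P}^n$; since $\{B_q\}$ exhausts $\mathcal{P}^n$, for $j$ large this compact set is contained in $B_j$, whence no iterate can leave $B_j$ and the $(j{+}1)$-st reset is never triggered, a contradiction. Therefore resets occur only finitely often; after the last one $q$ freezes at some $q^*$ and, no escape from $B_{q^*}$ occurring thereafter, the tail of the sequence lies in the bounded set $B_{q^*}$. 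Choosing $N$ past the last reset and $\mathcal{K}=\{P\in\mathcal{P}^n:\|P\|\le c\}$ with $c\ge\max\{\sup_{P\in B_{q^*}}\|P\|,\ \|P^*\|\}$ then gives a compact $\mathcal{K}\subset\mathcal{P}^n\subset R_A$ with nonempty interior, containing $P^*$ and the tail $\{P^V_k\}_{k\ge N}$, as required.

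I expect the main obstacle to be the infinite-horizon Euler tracking estimate: one must verify that the forward-Euler error against the converging Riccati flow does not accumulate as the step index grows, which relies on both $\sum_k\epsilon_k^2<\infty$ and the asymptotic stability of $P^*$ from Remark~\ref{remasy}. A secondary technical point to pin down is the exhaustion property of $\{B_q\}$, namely that every compact subset of $\mathcal{P}^n$ is eventually contained in some $B_q$.
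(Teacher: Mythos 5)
Your proposal is correct and follows essentially the same route as the paper: the paper's proof simply invokes Lemma 3.4 of \cite{bian2016} and observes that the only ingredient needing replacement in the detectable (rather than observable) case is the local asymptotic stability of $P^*\geq 0$ together with the fact that every $P(0)\geq 0$ lies in the interior of its region of attraction, which is exactly what Lemma \ref{lemasy} and Remark \ref{remasy} supply. Your additional unpacking of the internals of that cited argument (the finite-time ODE tracking of the forward-Euler iterates, the reset-counting dichotomy, and the exhaustion property of $\{B_q\}$) is a faithful reconstruction of what Lemma 3.4 of \cite{bian2016} actually does, so no genuinely different idea is introduced.
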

\begin{proof}
	The proof of this lemma is almost the same as that of Lemma 3.4 in \cite{bian2016}. Even though \cite{bian2016} only discussed the case where $(A,B)$ is stabilizable and $(A,\sqrt{{Q}})$ is observable, their proof still works for the cases with $(A,B)$ stabilizable and $(A,\sqrt{{Q}})$ detectable. This is because Lemma 3.4 of \cite{bian2016} is bulit on the locally {\color{black}asymptotical} stability of $P^*$ and the fact that $P(0)>0$ is in the interior of the region of attraction of $P^*$ when $(A,B)$ is stabilizable and $(A,\sqrt{{Q}})$ is observable, which has been guaranteed by Lemma \ref{lemdifric}. By Lemma \ref{lemasy},  $P^*$ is also locally asymptotically stable and any $P(0)\geq 0$ is in the interior of the region of attraction of $P^*$ when $(A,B)$ is stabilizable and $(A,\sqrt{{Q}})$ is detectable. The rest of the proof is just the same as that of Lemma 3.4 in \cite{bian2016}.
\end{proof}

With the preparation above, we are now ready to establish the convergence of Algorithm \ref{vialg1ex} as follows.
\begin{thm}\label{thmconvi}
	If $(A,B)$ is stabilizable and $(A,\sqrt{{Q}})$ is detectable, then, for any ${P}^V_0\geq0$, the sequence $\{{P}^V_k\}_{k=0}^{\infty}$ obtained from Algorithm \ref{vialg1ex} satisfies $\lim\limits_{k\to \infty}{P}^V_k={P}^*$, where $P^*$ is the unique positive semi-definite solution to \eqref{areli}.
\end{thm}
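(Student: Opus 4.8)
The plan is to recognize Algorithm \ref{vialg1ex} as a forward--Euler discretization, with diminishing step sizes $\epsilon_k$, of the differential Riccati equation \eqref{dreq}, and then to run the ODE method in the spirit of \cite{bian2016}: the interpolated discrete trajectory shadows the continuous flow of \eqref{dreq}, which by Lemma \ref{lemdifric} converges to $P^*$ from every $P(0)\geq 0$. Concretely, writing $f(P)\triangleq A^TP+PA+Q-PBR^{-1}B^TP$, the update reads $\tilde{P}^V_{k+1}=P^V_k+\epsilon_k f(P^V_k)$, so $f$ is the driving vector field and the conditions $\sum_k\epsilon_k=\infty$, $\sum_k\epsilon_k^2<\infty$ are exactly what is needed to make the accumulated Euler error vanish asymptotically. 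The argument then splits into three steps: disposing of the projection mechanism, confining the iterates to the region of attraction, and invoking asymptotic stability of $P^*$.

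First I would dispose of the reset mechanism by showing the counter $q$ stabilizes at a finite value, i.e. the reset to $P^V_0$ occurs only finitely often. This is where Lemma \ref{lembounded} enters: it provides $N\in\mathbb{N}_+$ and a compact set $\mathcal{K}\subset R_A$ with $P^*\in\mathcal{K}$ such that $\{P^V_k\}_{k=N}^\infty\subset\mathcal{K}$, so the tail is bounded. Since $\{B_q\}$ is increasing with $\lim_{q\to\infty}B_q=\mathcal{P}^n$ and $\mathcal{K}$ is bounded, some $B_{q^*}$ contains $\mathcal{K}$, and once the iterates remain inside $\mathcal{K}$ the branch ``$\tilde{P}^V_{k+1}\notin B_q$'' can no longer fire; hence $q$ is eventually constant. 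After the last reset the recursion becomes the genuine Euler scheme $P^V_{k+1}=P^V_k+\epsilon_k f(P^V_k)$ with every iterate lying in $\mathcal{K}\subset R_A$.

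Next I would establish convergence of this confined Euler recursion. Since $f$ is smooth it is Lipschitz on the compact set $\mathcal{K}$, so the one--step discretization error is $O(\epsilon_k^2)$, and standard ODE--method estimates (as in \cite{bian2016}) show that the piecewise interpolation of $\{P^V_k\}$, reparameterized by the cumulative time $t_k=\sum_{j<k}\epsilon_j$, is an asymptotic pseudo--trajectory of \eqref{dreq}. By Remark \ref{remasy}, $P^*$ is a locally asymptotically stable equilibrium of \eqref{dreq} whose domain of attraction contains all of $\mathcal{P}^n$, and the whole tail is trapped in the compact attracted set $\mathcal{K}$; feeding these two facts into the asymptotic--stability argument of \cite{bian2016} then yields $\lim_{k\to\infty}P^V_k=P^*$.

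The main obstacle is this last step: because $P^*$ is only positive \emph{semi}-definite it sits on the boundary of $\mathcal{P}^n$, so the usual interior--equilibrium Lyapunov reasoning does not apply directly. This is precisely where the detectability--based results are indispensable — Lemma \ref{lemasy}, built on the extended Riccati convergence result Lemma \ref{lemdifricex}, supplies both the local asymptotic stability of the boundary equilibrium $P^*$ and the fact that every $P(0)\geq0$ lies in the interior of its domain of attraction. These are exactly the ingredients the observability--based proof of \cite{bian2016} relied on, and recovering them under the weaker detectability hypothesis is what makes the present generalization go through.
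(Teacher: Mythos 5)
Your proposal is correct and follows essentially the same route as the paper: it is the ODE method of stochastic approximation, using Lemma \ref{lembounded} to confine the tail of the iterates to a compact subset of the region of attraction (and to neutralize the reset mechanism), and Lemma \ref{lemasy}/Remark \ref{remasy} to supply the local asymptotic stability of the boundary equilibrium $P^*$ under detectability, before passing to the limiting differential Riccati equation \eqref{dreq}. The only cosmetic difference is that the paper carries the reset term $Z_k$ explicitly through the Skorokhod-space interpolation argument and invokes the Kushner--Yin limit theorem, whereas you argue the resets cease after finitely many steps; both are sound.
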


\begin{proof}
	The proof of Theorem \ref{thmconvi} follows the same lines as those of Theorem 3.3 in \cite{bian2016}.
	
	First, let $Z_k\triangleq P_{k+1}^V-\tilde{P}_{k+1}^V$. Then, based on Lemma \ref{lembounded}, there exist a sufficiently large integer $N$ and a compact set $\mathcal{K}$ such that $P_{k}^V\in \mathcal{K}$ and
	$$
	Z_k=\left\{
	\begin{aligned}
		&P_0^V-\tilde{P}_{k+1}^V, \,\, & \textup{if } \tilde{P}_{k+1}^V\notin \mathcal{K} \\
		&0\,\,& \textup{Otherwise }
	\end{aligned}
	\right.
	$$ for any $k\geq N$.
	
	Consider the following continuous-time interpolation:
	\begin{align*}
		P^0(t)=
		&P_k^V, \,\,  t\in [t_k,t_{k+1}), k\in \mathbb{N},
	\end{align*}
	where $t_0=0$ and $t_k=\sum_{i=0}^{k-1}\epsilon_i$ for $k\geq 1$. Define the shifted process $P^k(t)=P^0(t_k+t), \forall t\geq 0,k\in \mathbb{N}$. Then, for all $k\geq N$ and $t\geq 0$,
	\begin{align*}
		P^k(t)=&P_k^V+\sum_{i=k}^{m(t+t_k)-1} \epsilon_i (A^T{P}^V_i+{P}^V_iA-{P}^V_iB{R}^{-1}B^T{P}^V_i\\&+{Q})+\sum_{i=k}^{m(t+t_k)-1}Z_i\\
		=& P_k^V+H^k(t)+Z^k(t)+e^k(t),
	\end{align*}
	where
	\begin{align*}
		H^k(t)=&\int_{0}^{t}(A^T{P}^k(s)+{P}^k(s)A-{P}^k(s)B{R}^{-1}B^T{P}^k(s)\\&+{Q})ds\\
		Z^k(t)=&\sum_{i=k}^{m(t+t_k)-1}Z_i\\
		e^k(t)=&\sum_{i=k}^{m(t+t_k)-1} \epsilon_i (A^T{P}^V_i+{P}^V_iA-{P}^V_iB{R}^{-1}B^T{P}^V_i\\&+{Q})-H^k(t)\\
		m(t)=&j, ~~0\leq t_j\leq t<t_{j+1}.
	\end{align*}
	As in the proof of Theorem 3.3 in \cite{bian2016}, one can show that $\{H^k(t)\}_{k=N}^{\infty}$, $\{Z^k(t)\}_{k=N}^{\infty}$ and $\{e^k(t)\}_{k=N}^{\infty}$ are all relatively compact in $\mathcal{D}([0,T], \mathcal{S}^n)$ for any $T>0$, and $\lim\limits_{k\to \infty}e^k(t)=0$. Then, $\{P^k(t)\}_{k=N}^{\infty}$ is also relatively compact in $\mathcal{D}([0,T], \mathcal{S}^n)$ for any $T>0$. Thus, there exists a converging subsequence $\{(P^{k'}(t),Z^{k'}(t))\}$ of $\{(P^{k}(t),Z^{k}(t))\}$ whose limit $(P(t),Z(t))$ satisfies, for any $0 \leq t \leq T$,
	\begin{align}\label{pint}
		P(t)=&P(0)+\int_{0}^{t}(A^T{P}(s)+{P}(s)A+{Q}\notag\\&-{P}(s)B{R}^{-1}B^T{P}(s))ds+Z(t).
	\end{align}
	
	Moreover, as in the proof of Lemma 3.4 in \cite{abounadi2002}, it can be shown that $Z(t)=0$. Thus,
	taking the derivative on both sides of \eqref{pint} directly leads to \eqref{dreq}.
	Since $P^*\in \mathcal{K}\subset R_A$ is locally asymptotically stable and $\{{P}^V_k\}_{k=N}^{\infty}$  remains in the region of attraction of $P^*$, we conclude $\lim\limits_{k\to \infty}{P}^V_k={P}^*$ by Theorem 2.1 of Chapter 5 in \cite{kushner2003}.
\end{proof}

\vskip 0pt plus -1fil

\begin{IEEEbiography}[{\includegraphics[width=1in,height=1.25in,clip,keepaspectratio]{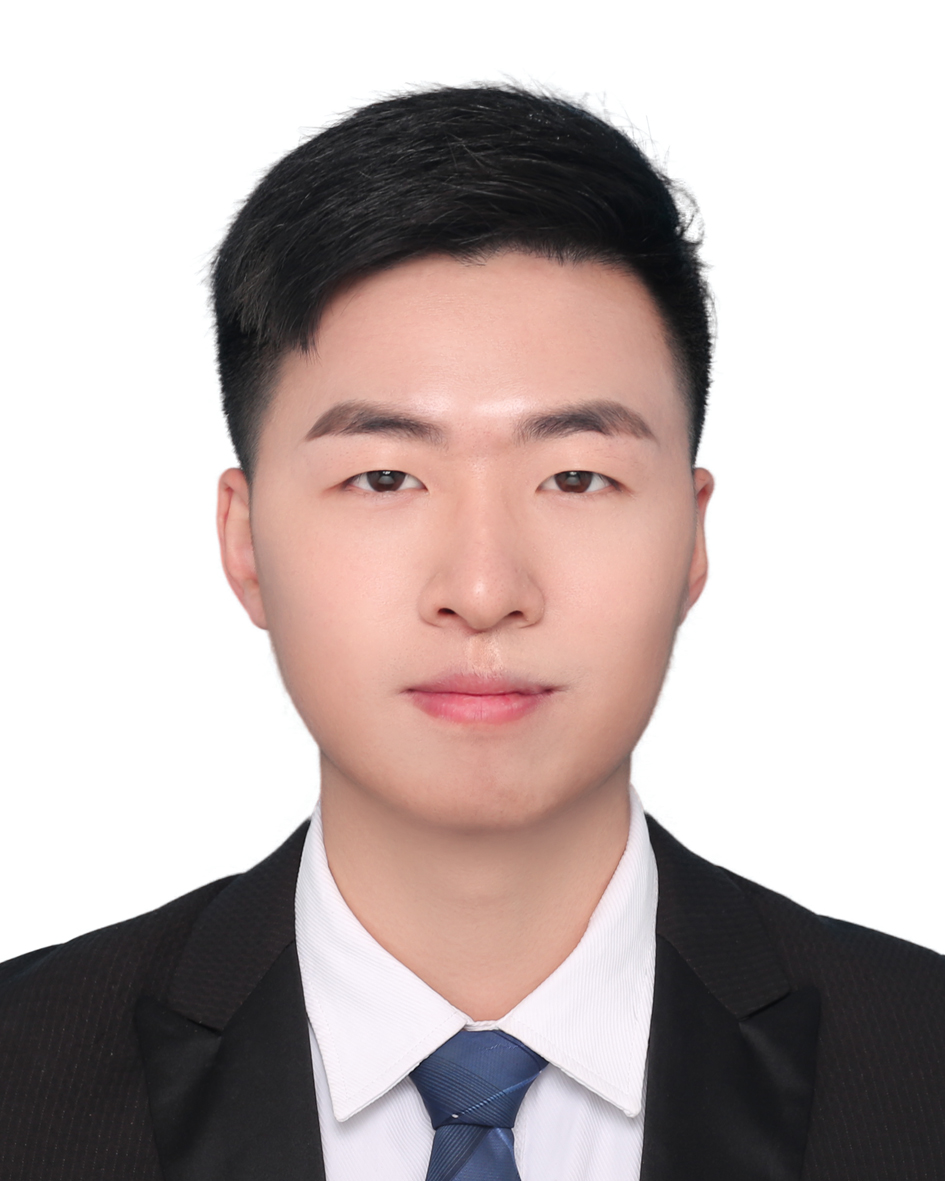}}]
	{Liquan Lin} received the B.Eng. degree from Huazhong University of Science and Technology, Wuhan, China,  in 2021. He is currently pursuing the Ph.D. degree in the Department of Mechanical and Automation Engineering, The Chinese University of Hong Kong, Hong Kong SAR, China.
	
	His current research interests include multi-agent systems, output regulation, data-driven control, and reinforcement learning.
	
\end{IEEEbiography}

\begin{IEEEbiography}[{\includegraphics[width=1in,height=1.25in,clip,keepaspectratio]{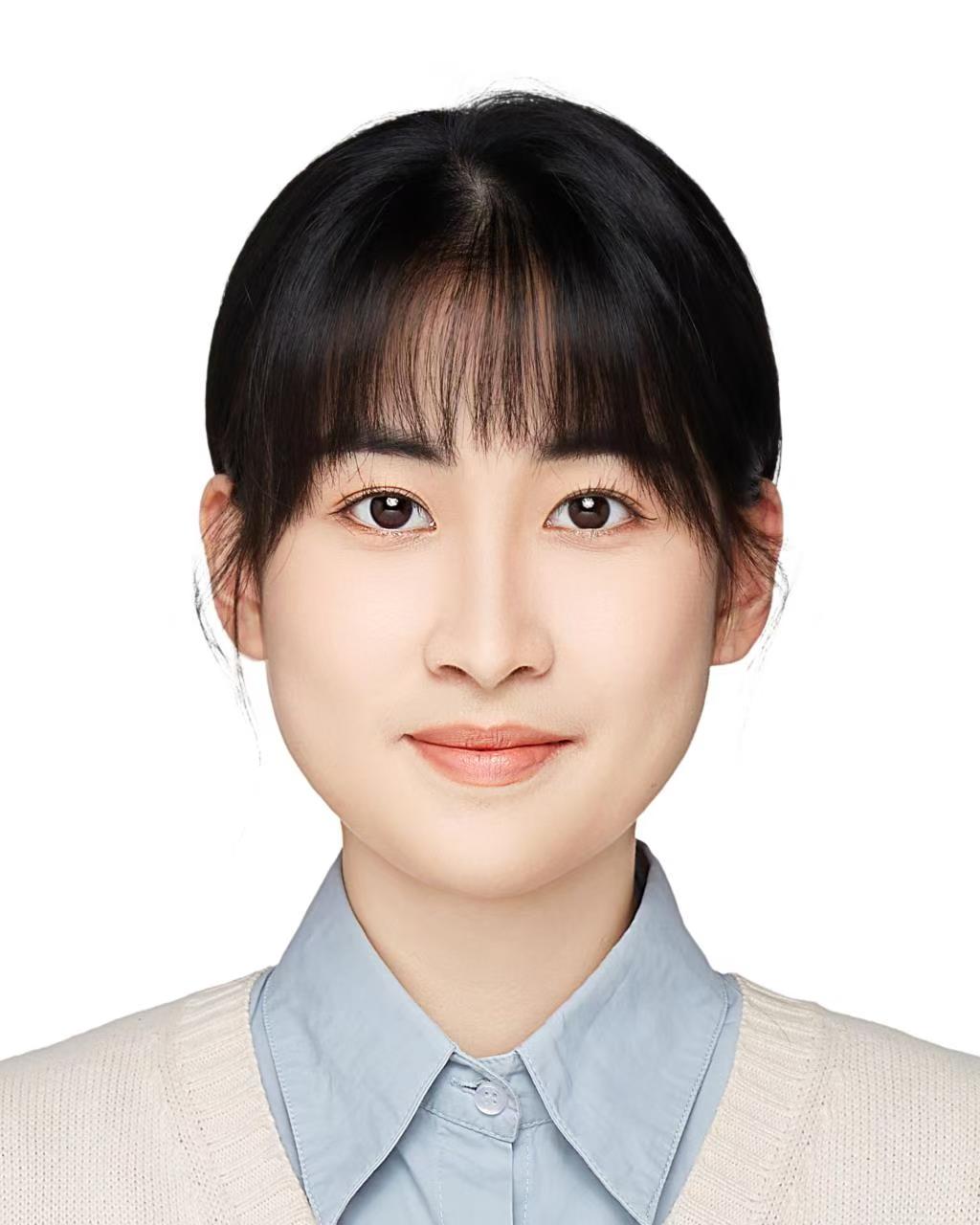}}]
	{Haoyan Lin} received her B.Eng.\ degree in Automation Science and Engineering from South China University of Technology, Guangzhou, China, in 2022. She is currently pursuing the Ph.D.\ degree in the Department of Mechanical and Automation Engineering, The Chinese University of Hong Kong, Hong Kong SAR, China. Her current research interests include Euler-Lagrange systems, output regulation and data-driven control.
	
\end{IEEEbiography}

\begin{IEEEbiography}[{\includegraphics[width=1in,height=1.25in,clip,keepaspectratio]{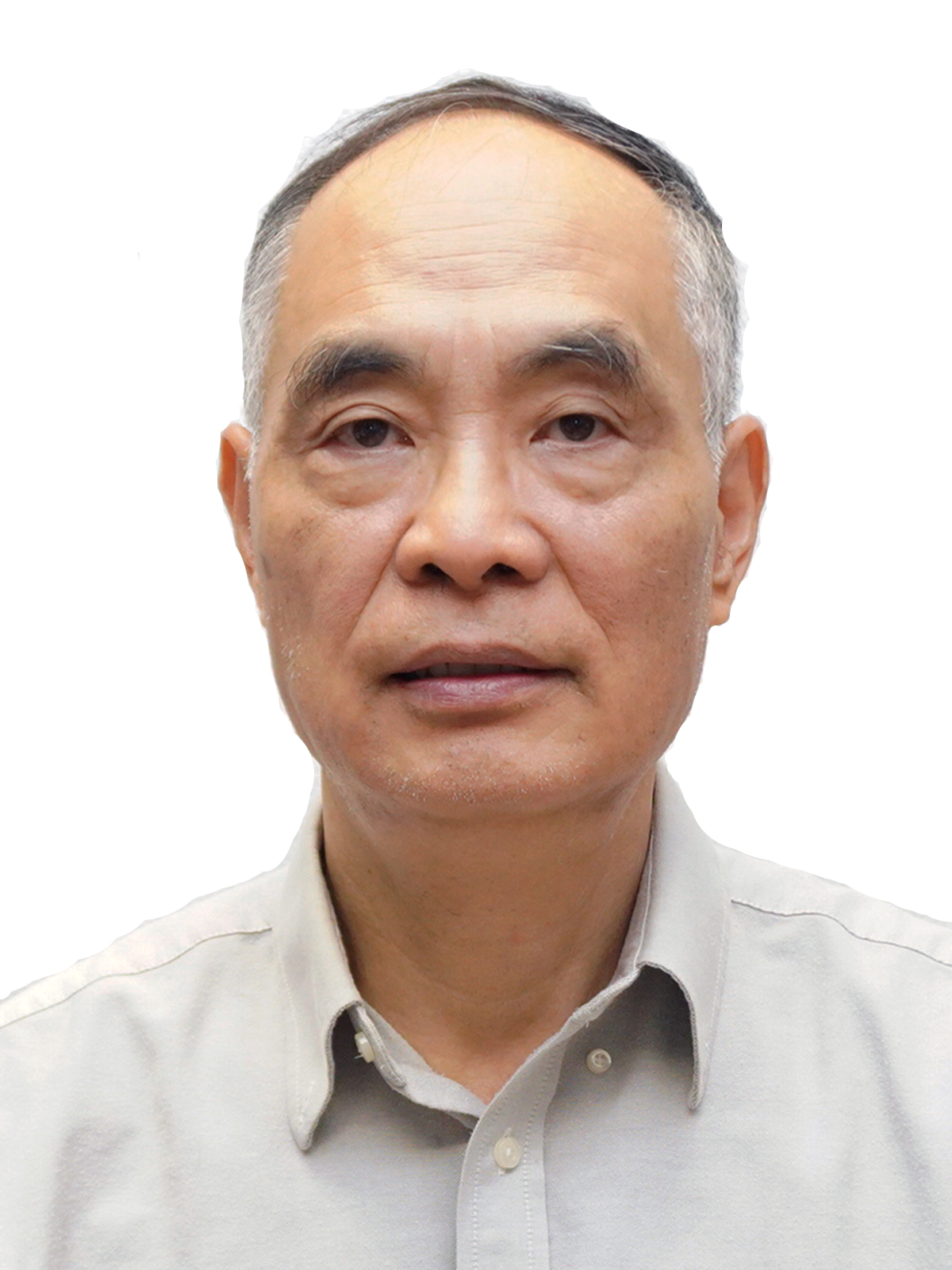}}]{Jie Huang} (Life Fellow, IEEE) received the Diploma
	from Fuzhou University, Fuzhou, China,
	the master’s degree from Nanjing University
	of Science and Technology, Nanjing, China, and
	the Ph.D. degree from Johns Hopkins University,
	Baltimore, MD, USA.
	
	He is a Choh-Ming Li Research Professor of
	of Mechanical and Automation Engineering, The
	Chinese University of Hong Kong (CUHK), and Associate Dean (Research) of Faculty of Engineering, CUHK.
	His research interests include nonlinear control theory and applications, multi-agent systems, game theory, and flight guidance and control.
	
	Dr. Huang is a Fellow of IFAC, CAA, and HKIE.
\end{IEEEbiography}

\end{document}